\title[LWP for gKdV on the background of a bounded function]{Local well-posedness for the gKdV equation on the background of a bounded function}
\author[J.M. Palacios]{Jos\'e Manuel Palacios}
\address{Institut Denis Poisson, Universit\'e de Tours, Universit\'e d'Orleans, CNRS, Parc Grandmont 37200, Tours, France}
\email{jose.palacios@lmpt.univ-tours.fr}
\newcommand{\be}{\begin{equation}}
\newcommand{\ee}{\end{equation}}
\newcommand{\bp}{\begin{proof}}
\newcommand{\ep}{\end{proof}}
\newcommand{\bel}{\begin{equation}\label}
\newcommand{\eeq}{\end{equation}}
\newcommand{\bea}{\begin{eqnarray}}
\newcommand{\eea}{\end{eqnarray}}
\newcommand{\bee}{\begin{eqnarray*}}
\newcommand{\eee}{\end{eqnarray*}}
\newcommand{\ben}{\begin{enumerate}}
\newcommand{\een}{\end{enumerate}}
\newcommand{\R}{\mathbb{R}}
\newcommand{\N}{\mathbb{N}}
\newcommand{\Z}{\mathbb{Z}}
\newcommand{\T}{\mathbb{T}}
\newcommand{\supp}{\operatorname{supp}}
\newtheorem{thm}{Theorem}[section]
\newtheorem{lem}[thm]{Lemma}
\newtheorem{prop}[thm]{Proposition}
\newtheorem{defn}[thm]{Definition}
\theoremstyle{remark}
\newtheorem{rem}{Remark}[section]
\definecolor{codegreen}{rgb}{0,0.6,0}
\definecolor{codegray}{rgb}{0.5,0.5,0.5}
\definecolor{codepurple}{rgb}{0.58,0,0.82}
\definecolor{backcolour}{rgb}{0.95,0.95,0.92}
\lstdefinestyle{mystyle}{
	backgroundcolor=\color{backcolour},   
	commentstyle=\color{codegreen},
	keywordstyle=\color{magenta},
	numberstyle=\tiny\color{codegray},
	stringstyle=\color{codepurple},
	basicstyle=\footnotesize,
	breakatwhitespace=false,         
	breaklines=true,                 
	captionpos=b,                    
	keepspaces=true,                 
	numbers=left,                    
	numbersep=5pt,                  
	showspaces=false,                
	showstringspaces=false,
	showtabs=false,                  
	tabsize=2
}
\numberwithin{equation}{section}
\pgfplotsset{compat=newest}
\theoremstyle{definition}
\numberwithin{ej}{section}
\begin{document}
%\begin{sf}
%==============PORTADA=====================

%==============CAMBIO DE MARGENES=====================
%\newgeometry{top=2.5cm, bottom=2.5cm, right=2.0cm, left=1.8cm}

%==============ENCABEZA/PIE DE PAGINA=====================
%\newpage
%\pagestyle{fancy}
%\fancyhf{}

%Encabezado
%\fancyhead[L]{\rightmark}
%\fancyhead[L]{\small \rm \textit{\curso}} %Izquierda
%\fancyhead[R]{\small \rm \textit{}} %Derecha

%\fancyfoot[L]{\small \rm \textit{\titulo.}} %Izquierda
%\fancyfoot[R]{\small \rm \textbf{\thepage}} %Derecha
%\fancyfoot[C]{\thepage} %Centro

\renewcommand{\sectionmark}[1]{\markright{\thesection.\ #1}}
\renewcommand{\headrulewidth}{0.5pt}
\renewcommand{\footrulewidth}{0.5pt}
\begin{abstract}
We prove the local well-posedness for the generalized Korteweg-de Vries equation in $H^s(\R)$, $s>1/2$, under general assumptions on the nonlinearity $f(x)$, on the background of an $L^\infty_{t,x}$-function $\Psi(t,x)$, with  $\Psi(t,x)$ satisfying some suitable conditions. As a consequence of our estimates, we also obtain the unconditional uniqueness of the solution in $H^s(\R)$. This result not only gives us a framework to solve the gKdV equation around a Kink, for example, but also around a periodic solution, that is, to consider localized non-periodic perturbations of a periodic solution. As a direct corollary, we obtain the unconditional uniqueness of the gKdV equation in $H^s(\R)$ for $s>1/2$. We also prove global existence in the energy space $H^1(\R)$, in the case where the nonlinearity satisfies that $\vert f''(x)\vert\lesssim 1$.
\end{abstract}

\maketitle 

\section{Introduction}

\subsection{The model} The initial value problem for the $k$-Korteweg-de Vries equation ($k$-KdV) \begin{align}\label{kdv}
\begin{cases}\partial_tu+\partial_x^3u\pm u^k\partial_xu=0, \quad t\in\R,\,x\in\R,\,k\in\Z^+,
\\ u(0,x)=u_0(x),
\end{cases}
\end{align}
has been extensively studied in the last five decades and is one of the most famous equations in the context of dispersive PDEs. This family of equations includes the celebrated Korteweg-de Vries (KdV) equation (case $k=1$), which was derived as a model for the unidirectional propagation of nonlinear dispersive long waves \cite{KdV}, and subsequently found in the study of collision-free hydro-magnetic waves \cite{GaMo}. Nowadays, the KdV equation has shown applications to several physical situations, such as for example, in plasma physics, for the study of ion-acoustic waves in cold plasma \cite{BeKa,WaTa}, as well as  some relationships with the Fermi-Pasta-Ulam problem \cite{Ku,Za1,Za2,Za3}. Moreover, some connections with algebraic geometry were given in \cite{DuMaNo} (see also \cite{Mi1} and the references therein). On the other hand, in \cite{Wi} it has been shown that this equation also describes pressure waves in a liquid-gas bubble mixture, as well as waves in elastic rods (see \cite{Na}). We refer to \cite{Mi1} for a more extensive description of all of these (and more) physical applications.

\medskip

In the case where $k=2$ we find another fairly celebrated equation, the so-called modified KdV equation, which also models the propagation of weak nonlinear dispersive waves. In this regard, a large class of hyperbolic models has been reduced to the latter two equations. It is worth to notice that there is a deep relationship
between these two models given by the Miura transformation \cite{Mi2}.

\medskip

These two cases ($k=1,2$) correspond to completely integrable systems, in terms of the existence of a Lax-pair, and both of them have been solved via inverse scattering. An interesting  property of \eqref{kdv} is that these are the only two cases on which this equation corresponds to a completely integrable system (see \cite{GaGrKrMi1,GaGrKrMi2}).

\medskip

One of the most important features of equation \eqref{kdv} is the existence of solitary wave solutions of both types, localized solitary waves and kink solutions. In the completely integrable cases, these solutions correspond to soliton solutions, that is, they preserve their shape and speed after collision with objects of the same type. 

\medskip

In this work we seek to study the initial value problem associated with the following generalization of the $k$-KdV equation \eqref{kdv},  \begin{align}\label{ggkdv_v}
\begin{cases} 
\partial_tv+\partial_x(\partial_x^2v+ f(v))=0,
\\ v(0,x)=\Phi(x),
\end{cases}
\end{align}
where $v=v(t,x)$ stands for a real-valued function, the nonlinearity $f$ is also real-valued, and $t,x\in\R$. Motivated by the study of Kink solutions, here we do not intend to assume any decay of the initial data $\Phi(x)$ but, for the moment, only that $\Phi\in L^\infty(\R)$. Instead, we decompose the solution $v(t,x)$ in the following fashion \begin{align}\label{deco_v_u}
v(t,x)=u(t,x)+\Psi(t,x),
\end{align}
where we assume that $\Psi\in L^\infty(\R^2;\R)$ is a given function (see \eqref{hyp_psi} below for the specific hypotheses on $\Psi$) and we seek for $u(t)\in H^s(\R)$. Then, it is natural to rewrite the above IVP in terms of the Cauchy problem associated with the following generalized Korteweg-de Vries (gKdV) equation
\begin{align}\label{gkdv_v}
\begin{cases}
\partial_tu+\partial_t\Psi+\partial_x\big(\partial_x^2u+\partial_x^2\Psi+f(u+\Psi)\big)=0,
\\ u(0,x)=u_0(x)\in H^s(\R).
\end{cases}
\end{align}
We stress that equation \eqref{gkdv_v} is nothing else that equation \eqref{ggkdv_v} once replacing the decomposition given in \eqref{deco_v_u}. In the case where $f(x)=x^2$ and $\Psi=\Psi(x)$ is a time-independent function belonging to the so-called Zhidkov class \[
\mathcal{Z}(\R):=\left\{\Psi\in\mathcal{D}'(\R): \ \Psi\in L^\infty(\R),\ \Psi'\in H^{\infty}(\R)\right\},
\]
this equations has been used to model the evolution of bores on the surface of a channel, incorporating nonlinear and dispersive effects intrinsic to such propagation \cite{BoRaSc}.

\medskip

\textbf{Important:} In this work we only assume that $f:\R\to\R$ is a real-analytic function satisfying that its Taylor expansion around zero has infinite radius of convergence, that is, there exists a family $\{a_k\}_{k\in\N}\subset\R$ such that, for all $x\in\R$, the nonlinearity $f(x)$ can be represented as \begin{align}\label{hyp_f_nonl}
f(x)=\sum_{k=0}^{\infty} a_kx^k, \quad \hbox{ with } \quad \limsup_{k\to+\infty}\sqrt[k]{\vert a_k\vert}=0.
\end{align}
Notice that any polynomial $p(x)$ satisfies the previous hypothesis, as well as $\exp(x)$, $\sinh(x)$, $\cosh(x)$, $\sin(x)$, $\cos(x)$, $p(\sin(x))$, etc.

\medskip

It is worth to notice that, since equation \eqref{gkdv_v} can be regarded as a perturbation of the gKdV equation \eqref{ggkdv_v} with initial data $v(0,\cdot)\in H^s(\R)$, one might think that, in order to prove local well-posedness for equation \eqref{gkdv_v}, it is reasonable to proceed by using the contraction principle as in \cite{KePoVe2}. However, it seems that this does not even hold in the case where $f(x)=x^2$, due to the occurrence of the term $\Psi\partial_xu$ since $\Psi$ is not integrable, what makes this problem more involved even for the KdV case.

\medskip

As mentioned before, one of our main motivations comes from studying Kink solutions. For instance, we can consider the defocusing modified Korteweg-de Vried (mKdV) equation, that is $f(u)=-u^3$, as well as the Gardner equation, that is $f(u)=u^2-\beta u^3$. Both equations are well-known to have Kink solutions given by  (respectively): \begin{align*}
\Psi_{\mathrm{mKdV},c}(t,x)&=\pm\sqrt{c}\tanh\left(\sqrt{\tfrac{c}{2}}(x+ct)\right), \quad c>0, \quad \hbox{and}
\\ \Psi_{\mathrm{Gardner},c}(t,x)&=\dfrac{1}{3\beta}\pm \dfrac{1}{\sqrt{\beta}}\Phi_{\mathrm{mKdV},c}(t,x-\tfrac{t}{3\beta}), \quad \beta>0.
\end{align*}
Moreover, at the same time we also seek to give a framework to study localized non-periodic perturbations of periodic solutions for the generalized model \eqref{gkdv_v}, such as for example, the famous cnoidal and dnoidal wave solutions of the KdV and the mKdV equations (respectively)
\begin{align}\label{sncn}
\Psi_{\mathrm{cn},c}(t,x)&:=\alpha+\beta\mathrm{cn}^2\big(\gamma(x-ct),\kappa\big), \quad \Psi_{\mathrm{dn},c}(t,x):=\beta\mathrm{dn}\big(\gamma(x-ct),\kappa\big),
\end{align}
with $c>0$ and $(\alpha,\beta,\gamma,\kappa)\in \R^4$ satisfying some suitable conditions, where $\mathrm{cn}(\cdot,\cdot)$ and $\mathrm{dn}(\cdot,\cdot)$ stand for the Jacobi elliptic cnoidal and dnoidal functions respectively.

\medskip

It important to mention that, the gKdV equation \eqref{ggkdv_v} enjoys (at least formally) several conservation laws, such as the conservations of the mean, the mass and the energy, which are given by (respectively) \begin{align*}
I_1(v(t))&:=\int_\R v(t,x)dx=I_1(v_0),
\\ I_2(v(t))&:=\int_\R v^2(t,x)dx=I_2(v_0),
\\ I_3(v(t))&:=\int_\R \Big(v_x^2(t,x)-F\big(v(t,x)\big)\Big)dx=I_3(v_0),
\end{align*}
where $F(\cdot)$ stands for a primitive of $f(\cdot)$. However, due to the presence of $\Psi(t,x)$, non of these quantities are conserved for solutions of equation \eqref{gkdv_v}. Although, a suitable modification of the energy functional $I_3$ shall play a key role in proving global well-posedness in $H^1(\R)$ when $f(x)$ grows at most as $x^2$ (see Theorem \ref{MT_gwp} below).

\medskip

\textbf{Important:} In the sequel we shall always assume that the given function $\Psi(t,x)$ satisfies the following hypotheses:
\begin{align}\label{hyp_psi}
\begin{cases}
\partial_t\Psi\in L^\infty(\R^2), 
\\ \Psi\in L^\infty(\R,W^{s+1^+,\infty}(\R)), 
\\ (\partial_t\Psi+\partial_x^3\Psi+\partial_xf(\Psi))\in L^\infty(\R,H^{s^+}(\R)).
\end{cases}
\end{align}

\begin{rem} Note that any function $\Psi=\Psi(x)\in L^\infty(\R)$ such that $\Psi'\in H^\infty(\R)$, for example $\Psi$ being a Kink, satisfies all the conditions in \eqref{hyp_psi}. Hence, equation \eqref{gkdv_v} together with conditions \eqref{hyp_psi} contain as particular cases all the frameworks considered in  \cite{BoRaSc,Ga,IoLiSc,Zhi}. However, we do not require that $\Psi(t,x)$ has well-defined limits at $\pm\infty$ as in those previous works. For instance, if $\Psi=\Psi(t,x)$ solves the gKdV equation \eqref{ggkdv_v}, then the latter expression in \eqref{hyp_psi} is identically zero, and hence the third hypothesis is immediately satisfied. In particular, we can consider $\Psi(t,x)$ being a periodic solution of the gKdV equation. Nevertheless, $\Psi(t,x)$ does not need to be a solution, neither to have a small $H^{s+}$-norm once replaced in the equation. For example, we can solve equation \eqref{gkdv_v} with $\Psi$ being \[
\Psi(t,x)=1+4\tanh(x+t)+\cos\left(\log(1+x^2+t^2)\right).
\]
Notice that this function does not have symmetries (neither odd nor even), nor well-defined limits at $\pm\infty$, also, none of its derivatives has exponential decay. Clearly, it does not solve the equation either, whereas it satisfies all the conditions in \eqref{hyp_psi}.
\end{rem}

\smallskip

\subsection{Unconditional uniqueness}

The generalized Korteweg-de Vries equation \eqref{ggkdv_v} has been proven to be locally well-posed (LWP) for regular localized initial data in \cite{AbBoFeSa,BoSm,Ka,Sa}. Since then, considerable effort has been devoted to the understand the Cauchy problem \eqref{kdv} with rough data. In the seminal work of Kenig, Ponce and Vega, LWP for equation \eqref{kdv} has been established in $H^s$-spaces, for all $k\in \Z_+$,  with $s$ moving in a range that depends on $k$. In the case where $k\geq 4$ these results are sharp, in the sense that they reach the critical index given by the scaling invariance \cite{KePoVe2}. This proof relies in Strichartz estimates, along with local smoothing effect and maximal estimates. Then, a normed functional space is constructed based on these estimates, which allows them to solve \eqref{kdv} via a fixed point argument. The solutions obtained in this way are obviously unique in such a resolution space. However, as explained in \cite{MoVe}, the question of whether this uniqueness holds for solutions that do not belong to these resolution spaces turns out to be far from trivial at this level of regularity. This type of question was first raised by Kato in \cite{Ka2} in the context of Schr\"odinger equations. We refer to such uniqueness in $L^\infty((0, T), H^s(\R))$, without intersecting with any other auxiliary functional space
as \emph{unconditional uniqueness}. This type of uniqueness has been shown to be useful, for example, to pass to the limit in perturbative analyzes, when one of the coefficients of the equation tends to zero (see  \cite{Mo} for instance). 

\smallskip

\subsection{Main results}
In the remainder of this work we focus on studying the Cauchy problem associated with \eqref{gkdv_v}. Before going further, let us give a precise definition of what we mean by a solution. 

\begin{defn}
Let $T>0$ and $s>1/2$, both being fixed. Consider $u\in L^\infty((0,T),H^s(\R))$. We say that $u(t,x)$ is a solution to \eqref{gkdv_v} emanating from the initial data $u_0\in H^s(\R)$ if $u(t,x)$ satisfies \eqref{gkdv_v} in the distributional sense, that is, for any test function $\phi\in C^\infty_0((-T,T)\times \R)$ we have \begin{align}\label{distributional_sol}
&\int_0^{\infty}\int_\R\Big[\big(\phi_t+\phi_{xxx}\big)u+\phi_x\big(f(u+\Psi)-f(\Psi)\big)\Big] dxdt+\int_\R\phi(0,x)u_0(x)dx
\\ & \qquad =-\int_0^{\infty}\int_\R\Big[\big(\phi_t+\phi_{xxx}\big)\Psi+\phi_xf(\Psi)\Big] dxdt-\int_\R \phi(0,x)\Psi(0,x)dx.\nonumber
\end{align}
\end{defn}

\begin{rem}\label{rem_spaces_ic_duhamel}
Notice that, for $u\in L^\infty((0,T),H^s(\R))$ with $s>1/2$, we have that $u^p$ is well defined for all $p\in\N$, which along with \eqref{hyp_f_nonl} and  \eqref{hyp_psi} implies that \[
f(u+\Psi)-f(\Psi)\in L^\infty\big((0,T),H^{s}(\R)\big).
\]
Therefore, relation \eqref{distributional_sol} and the hypotheses in \eqref{hyp_psi} forces that $u_t\in L^\infty((0,T),H^{s-3}(\R))$, and hence \eqref{gkdv_v} is satisfied in $L^\infty((0,T),H^{s-3}(\R))$. In particular, we infer that \begin{align}\label{continuity_H_s_3}
u\in C([0,T],H^{s-3}(\R)).
\end{align}
Moreover, from hypotheses \eqref{hyp_psi} we infer that $\Psi\in C([0,T],L^\infty(\R))$, which in turn, together with \eqref{distributional_sol} and \eqref{continuity_H_s_3}, forces the initial condition $u(0)=u_0$. On the other hand, notice that we also have that $u\in C_w([0,T],H^s(\R))$, and that $u\in C([0,T],H^\theta(\R))$ for all $\theta<s$. Finally, we stress that the above relations also ensure that $u$ satisfies the Duhamel formula associated with \eqref{gkdv_v} in $C([0,T],H^{s-3}(\R))$.
\end{rem}

Our first main result give us the (unconditional) local well-posedness for \eqref{gkdv_v}.

\begin{thm}[Local well-posedness]\label{MT1}
Let $s>1/2$ fixed. Consider $f:\R\to\R$ to be any real-analytic function such that its Taylor expansion around zero has infinite radius of convergence. Consider also $\Psi(t,x)$ satisfying the conditions in \eqref{hyp_psi}. Then, for any $u_0\in H^s(\R)$ there exists $T=T(\Vert u_0\Vert_{H^s})>0$ and a solution to the IVP \eqref{gkdv_v} such that \[
u\in C([0,T],H^s(\R))\cap L^2_TW^{s^-,\infty}_x\cap X^{s-1,1}_T. 
\]
Furthermore, the solution is unique in the class \[
u\in L^\infty((0,T),H^s(\R)).
\]
Moreover, the data-to-solution map $\Phi: u_0\mapsto u$ is continuous from $H^s(\R)$ into $C([0,T],H^s(\R))$.
\end{thm}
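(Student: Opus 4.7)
The strategy is to combine energy estimates at the $H^s$ level with Bourgain-type $X^{s-1,1}$ estimates, rather than attempting a direct contraction, which the authors have already noted fails due to terms like $\Psi\,\partial_x u$. The first step is to rewrite \eqref{gkdv_v} as
$$\partial_t u+\partial_x^3u+\partial_x\bigl[f(u+\Psi)-f(\Psi)\bigr]=-R(t,x),\qquad u(0)=u_0,$$
where $R:=\partial_t\Psi+\partial_x^3\Psi+\partial_x f(\Psi)$ lies in $L^\infty_tH^{s^+}_x$ by \eqref{hyp_psi} and is treated as a tame source term. Expanding the Taylor series and applying the binomial identity gives
$$f(u+\Psi)-f(\Psi)=\sum_{k\geq 1}\sum_{j=1}^{k}a_k\binom{k}{j}u^j\Psi^{k-j},$$
so the nonlinearity reduces to a polynomial-type expression in $u$ with $L^\infty_{t,x}$ coefficients whose sums converge absolutely thanks to \eqref{hyp_f_nonl} and the boundedness of $\Psi$. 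This reduces every product to a tame estimate against functions in $W^{s+1^+,\infty}$.

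For existence, I would approximate: regularize $u_0$ to a smooth datum $u_0^{\varepsilon}$ and add a parabolic term $\varepsilon\partial_x^4$ to the equation to obtain smooth global solutions $u^{\varepsilon}$. The heart of the analysis is to derive uniform bounds on $[0,T]$, with $T=T(\|u_0\|_{H^s})$, in the three norms appearing in the theorem. The $H^s$ bound follows from a standard $\langle D\rangle^s$-energy estimate, using Kato--Ponce-type commutator estimates to tame $[\langle D\rangle^s,\Psi]\partial_xu^{\varepsilon}$ and its higher-order analogues. The $L^2_TW^{s^-,\infty}_x$ bound is obtained by applying the Kenig--Ponce--Vega maximal function and Strichartz inequalities for the Airy group to the Duhamel formula, feeding in the $H^s$ bound. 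Finally, the $X^{s-1,1}_T$ bound comes directly from the equation:
$$\|u^{\varepsilon}\|_{X^{s-1,1}_T}\lesssim\|u_0\|_{H^{s-1}}+\bigl\|\partial_x[f(u^{\varepsilon}+\Psi)-f(\Psi)]\bigr\|_{L^2_TH^{s-1}_x}+\|R\|_{L^2_TH^{s-1}_x},$$
the right-hand side being controlled by the preceding estimates together with Moser-type product rules. Passing to the limit via Aubin--Lions compactness yields a solution $u$ of \eqref{gkdv_v} in the sense of \eqref{distributional_sol}.

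The delicate step, and the main obstacle, is unconditional uniqueness. Given two solutions $u_1,u_2\in L^\infty((0,T),H^s)$ with the same initial datum, the difference $w:=u_1-u_2$ satisfies
$$\partial_tw+\partial_x^3w+\partial_x\bigl[g(u_1,u_2,\Psi)\,w\bigr]=0,\qquad w(0)=0,$$
where $g:=\int_0^1 f'(\sigma u_1+(1-\sigma)u_2+\Psi)\,d\sigma$ lies in $L^\infty_TW^{s^-,\infty}_x$. A naive $H^{s-1}$-energy estimate loses one derivative on $w$ in the worst high-high-low interaction within $\partial_x(g\,w)$. To recover this derivative, I would observe that any $L^\infty_TH^s$-solution of \eqref{gkdv_v} automatically belongs to $X^{s-1,1}_T$ a posteriori (a bootstrap using the equation together with the product estimates above), which supplies the resonance structure needed to absorb the apparent loss through an integration by parts in time on the Bourgain side. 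The resulting weighted energy inequality, closed by Gronwall, forces $w\equiv 0$. The same circle of estimates, applied between $u_1$ and $u_2$ with distinct initial data, yields stability in a weaker norm.

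Continuity of the flow map then follows from a Bona--Smith argument: approximate $u_0$ by $u_0^n:=\rho_{1/n}\ast u_0$ converging in $H^s$, apply the existence theorem to obtain solutions $u^n$, and use the previous stability estimate on the pair $(u^n,u^m)$ together with the uniform $H^s$-bound to upgrade the weak convergence of $u^n$ to $u$ into strong convergence in $C([0,T],H^s)$. The main technical obstacle throughout is the simultaneous handling of the derivative in $\partial_x(g\,w)$ at the low regularity threshold $s>1/2$, which is precisely what forces the joint use of Bourgain, maximal, and energy norms.
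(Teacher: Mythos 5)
There is a genuine gap in your proposal: you claim the uniform $H^s$ a priori bound for $s>1/2$ follows from ``a standard $\langle D\rangle^s$-energy estimate, using Kato--Ponce-type commutator estimates to tame $[\langle D\rangle^s,\Psi]\partial_x u^{\varepsilon}$ and its higher-order analogues.'' The commutator with $\Psi$ is indeed tame, since $\Psi\in L^\infty_t W^{s+1^+,\infty}_x$. But the self-interaction terms $u^{j-1}\partial_x u$, $j\geq 2$, which arise from the Taylor expansion even when $\Psi\equiv 0$, are \emph{not} tame at this regularity: in the energy identity $\frac{d}{dt}\|u\|_{H^s}^2 = -2\langle J^s u, J^s(u^{j-1}u_x)\rangle$ the commutator $[J^s, u^{j-1}]u_x$ costs control of $\|u_x\|_{L^\infty}$, so the estimate closes by Kato--Ponce only for $s>3/2$. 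Lowering the threshold to $s>1/2$ is precisely the content of Proposition \ref{prop_energy_est_impro} in the paper, whose proof is not a commutator estimate but the ``improved energy method'' of Molinet--Vento: symmetrize the multiplier, split $\mathds{1}_T = \mathds{1}_{T,R}^{\mathrm{low}} + \mathds{1}_{T,R}^{\mathrm{high}}$ as in \eqref{decomposition}, localize in modulation around the resonance $\Omega_k\sim N_1N_2N_3$ (Lemma \ref{resonant_bourgain}), trade the derivative loss for the $X^{s-1,1}$ ingredient supplied by Lemma \ref{mst_basic_lemma}, and use the refined Strichartz estimate of Lemma \ref{lem_smoothing} for the nonresonant low-frequency pieces. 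You correctly diagnose exactly this derivative-loss problem in the uniqueness step and gesture at Bourgain spaces to recover it, but you do not notice that the identical difficulty already afflicts the existence bound, so your proposed scheme would stall before you even reach uniqueness.

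Two smaller discrepancies: the paper does not invoke Aubin--Lions for the limiting step, since the difference estimate of Proposition \ref{prop_diff_sol} already yields a Cauchy sequence in $C([0,T],H^{s-1})$; and the continuity of the flow map in the paper is proved via the Koch--Tzvetkov frequency-envelope lemma (constructing weights $\omega_N\nearrow\infty$ so that the solutions stay uniformly bounded in $H^s_\omega$), rather than via a Bona--Smith argument. A Bona--Smith scheme could in principle be made to work, but it would require running the existence estimate at a regularity strictly above $s$ and tracking the $H^s$-error, which you have not set up.
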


\begin{rem}
We refer to the next section for a definition of Bourgain spaces $X^{s,b}$ and their corresponding time-restricted versions $X^{s,b}_T$.
\end{rem}

\begin{rem}
Notice that the previous Theorem allows us both to take $f(x)$ being any polynomial but also to consider $f(x)=e^x$. In particular, if $f(x)=x^2$ or $f(x)=x^3$, the previous theorem allows us to take $\Psi(t,x)$ being, for instance, a periodic solution such as the cnoidal or dnoidal wave solutions (or any other traveling wave solution) given in \eqref{sncn}, respectively.
\end{rem}

As a direct corollary of the previous theorem, by considering $\Psi\equiv0$, we infer the unconditional uniqueness for the gKdV equation \eqref{ggkdv_v}, for initial data $v_0\in H^s(\R)$ with $s>1/2$. \begin{thm}
The Cauchy problem associated with \eqref{ggkdv_v} is unconditionally locally well-posed in $H^s(\R)$ for $s>1/2$.
\end{thm}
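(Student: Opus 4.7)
The plan is to obtain this statement as an immediate corollary of the main Theorem \ref{MT1} by specialising to the trivial background $\Psi \equiv 0$. First I would verify that $\Psi(t,x) \equiv 0$ satisfies each of the three conditions in \eqref{hyp_psi}: $\partial_t\Psi \equiv 0$ is trivially in $L^\infty(\R^2)$; every derivative of $\Psi$ vanishes, so $\Psi \in L^\infty(\R, W^{s+1^+,\infty}(\R))$; and $\partial_t\Psi + \partial_x^3\Psi + \partial_x f(\Psi) = \partial_x f(0) = 0 \in L^\infty(\R, H^{s^+}(\R))$. Thus Theorem \ref{MT1} is applicable verbatim, under the same real-analyticity hypothesis on $f$ which is already assumed in the target statement.

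With this choice of $\Psi$ the decomposition \eqref{deco_v_u} reduces to $v = u$, so equation \eqref{gkdv_v} collapses to the original gKdV equation \eqref{ggkdv_v} with initial data $v_0 = u_0 \in H^s(\R)$. Applying Theorem \ref{MT1} then yields, for every $v_0 \in H^s(\R)$, a local existence time $T = T(\Vert v_0\Vert_{H^s})$, a solution in $C([0,T],H^s(\R)) \cap L^2_T W^{s^-,\infty}_x \cap X^{s-1,1}_T$, continuous dependence of the solution on the data from $H^s(\R)$ into $C([0,T],H^s(\R))$, and, crucially, uniqueness in the much larger class $L^\infty((0,T),H^s(\R))$.

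This last uniqueness conclusion is precisely the content of unconditional uniqueness in the sense of Kato: uniqueness in $L^\infty_T H^s_x$ without intersecting any auxiliary resolution space (Bourgain, Strichartz or smoothing type). Combined with the existence and continuous dependence pieces, this gives unconditional local well-posedness of \eqref{ggkdv_v} in $H^s(\R)$ for $s>1/2$, as required.

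There is essentially no genuine obstacle in this deduction, since all the hard analysis has been absorbed into Theorem \ref{MT1}. The only point worth emphasising is the verification that the uniqueness class furnished by Theorem \ref{MT1} is exactly the unconditional class $L^\infty_T H^s_x$, rather than a proper subspace of it; this is indeed how that theorem is formulated, so the corollary follows at once.
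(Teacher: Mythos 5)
Your proposal is correct and follows exactly the same route as the paper, which presents this result as a direct corollary of Theorem \ref{MT1} obtained by taking $\Psi\equiv 0$ (the paper does not even spell out the trivial verification of \eqref{hyp_psi} that you include). Nothing more is needed.
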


Finally, under some extra conditions on the growth of $f(x)$, we prove global well-posedness for equation \eqref{gkdv_v}. 

\begin{thm}[GWP in $H^1(\R)$]\label{MT_gwp}
Assume further that $f:\R\to\R$ satisfies that \[
\vert f''(x)\vert\lesssim 1,\qquad \forall x\in\R.
\]
If the initial data $u_0\in H^s(\R)$, with $s\geq 1$, then the local solution $u(t)$ provided by Theorem \ref{MT1} can be extended for any $T>0$.
\end{thm}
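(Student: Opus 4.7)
The plan is to derive an a priori bound on $\|u(t)\|_{H^1}$ that does not blow up in finite time, and then combine it with the local theory (Theorem \ref{MT1}) to extend the solution globally. For $s>1$, persistence of $H^s$-regularity follows from standard commutator energy estimates, using that $H^1\hookrightarrow L^\infty$ in one dimension makes $\|u\|_{L^\infty_{t,x}}$ a consequence of the $H^1$ bound. The identities below are justified via smoothing of the initial data and passing to the limit using the continuity of the flow.

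\emph{Step 1 ($L^2$ control).} I would test the equation against $u$. The dispersive term integrates to zero, and writing $\mathcal{F}(z,\Psi):=\int_0^z[f(s+\Psi)-f(\Psi)]\,ds$, one integration by parts reduces the nonlinear contribution to $-2\int(\partial_\Psi\mathcal{F})(u,\Psi)\,\Psi_x\,dx$. Since $|f'(s+\Psi)-f'(\Psi)|\leq\|f''\|_{L^\infty}|s|$ gives $|\partial_\Psi\mathcal{F}|\lesssim u^2$, one obtains
\[
\tfrac{d}{dt}\|u\|_{L^2}^2\,\lesssim\,\|\Psi_x\|_{L^\infty}\|u\|_{L^2}^2+\|R\|_{L^2}\|u\|_{L^2},
\]
with $R:=\Psi_t+\Psi_{xxx}+\partial_x f(\Psi)\in L^\infty_tH^{s^+}_x$ by \eqref{hyp_psi}. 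Gronwall yields $\|u(t)\|_{L^2}\leq M(T)$ on any $[0,T]$.

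\emph{Step 2 (Modified energy).} I would introduce
\[
\tilde E(u)\,:=\,\int u_x^2\,dx\,-\,2\int P(u,\Psi)\,dx,\qquad P(u,\Psi):=F(u+\Psi)-F(\Psi)-f(\Psi)u,
\]
which is finite for $u\in H^1$ and corresponds formally to the conserved energy $\int(v_x^2-2F(v))\,dx$ of gKdV applied to $v=u+\Psi$, minus its value and first variation at $\Psi$ (where it is not integrable). Taylor expansion together with $|f''|\lesssim 1$ and $|f'(\Psi)|\lesssim 1+|\Psi|$ gives the pointwise bounds $|P|\lesssim(1+|\Psi|)u^2+|u|^3$ and $|\partial_\Psi P|\lesssim u^2$. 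Combined with Step 1 and the Gagliardo-Nirenberg inequality $\|u\|_{L^3}^3\lesssim\|u\|_{L^2}^{5/2}\|u_x\|_{L^2}^{1/2}$ (absorbed via Young), these yield the coercivity $\tilde E(u)\geq\tfrac{1}{2}\|u_x\|_{L^2}^2-C(T)$.

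\emph{Step 3 (Cancellation and conclusion).} Writing $\phi:=f(u+\Psi)-f(\Psi)$ so that $u_t=-\partial_x(u_{xx}+\phi)-R$, several integrations by parts, together with the identity $\int\phi\,\partial_x\phi\,dx=0$, produce the exact cancellation of the dangerous cubic $\int f''(u+\Psi)u_x^3\,dx$ between the contributions of $\tfrac{d}{dt}\int u_x^2$ and $-2\tfrac{d}{dt}\int P$, leaving only
\[
\tfrac{d}{dt}\tilde E(u)\,=\,2\int u_{xx}R\,dx\,+\,2\int\phi\,R\,dx\,-\,2\int(\partial_\Psi P)\,\Psi_t\,dx.
\]
Since $R\in H^{1^+}_x$, one integration by parts yields $|\!\int u_{xx}R|\leq\|u_x\|_{L^2}\|R_x\|_{L^2}$; the remaining two terms are controlled via the $L^2$ bound, Gagliardo-Nirenberg, $\|\Psi_t\|_\infty$, and $\|R\|_{L^2}$. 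Absorbing the $\|u_x\|_{L^2}$ factors with Young's inequality, one reaches $|\tfrac{d}{dt}\tilde E(u)|\leq C(T)(1+\tilde E(u))$; Gronwall closes the $H^1$ bound, and the blow-up alternative from Theorem \ref{MT1} extends the solution to any $T>0$. The main obstacle is verifying the algebraic cancellation in the displayed identity; once in place, $|f''|\lesssim 1$ is precisely what keeps $P$ and $\partial_\Psi P$ at most quadratic in $u$, which is what makes the Gronwall step close.
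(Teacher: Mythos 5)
Your proposal follows essentially the same strategy as the paper: first an $L^2$ bound via Gronwall (where $|f''|\lesssim 1$ is used to control the cubic-and-higher remainder $f(u+\Psi)-f(\Psi)-uf'(\Psi)$ pointwise by $u^2$), then the identical modified energy $\mathcal{E}(u)=\tfrac12\int u_x^2-\int\bigl(F(u+\Psi)-F(\Psi)-uf(\Psi)\bigr)$, whose time derivative telescopes by the same cancellations ($\int u_{xx}\partial_x\phi$ against $\int u_{xxx}\phi$, and $\int\phi\,\partial_x\phi=0$), leaving exactly the three remainder integrals you display. Your coercivity step via Gagliardo--Nirenberg and Young is also how the paper closes.

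Two small presentational differences, both improvements rather than gaps. First, your Step~1 phrases the paper's rather delicate truncation-and-limit integration by parts through the potential $\mathcal{F}(z,\Psi)$: since $\partial_\Psi\mathcal{F}(u,\Psi)=f(u+\Psi)-f(\Psi)-uf'(\Psi)$, the identity $u_x\phi=\partial_x\mathcal{F}(u,\Psi)-\Psi_x\,\partial_\Psi\mathcal{F}(u,\Psi)$ makes transparent at once which terms are exact derivatives and why the surviving piece is pointwise $O(u^2)$. Second, in Step~3 you integrate $\int u_{xx}R$ by parts to $-\int u_xR_x$ and absorb $\|u_x\|_{L^2}$ via Young into $\tilde E$, closing with Gronwall on $\tilde E$; the paper instead keeps $\int u\,\partial_x^2R\leq\|u\|_{L^2}\|R\|_{H^2}$, which produces a derivative bound depending only on $\|u\|_{L^2}$ (no Gronwall needed for the energy) but, taken literally, asks for $R\in H^2_x$ whereas \eqref{hyp_psi} with $s=1$ only guarantees $R\in H^{1^+}_x$. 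Your version is therefore slightly more economical in the regularity it consumes from $\Psi$.
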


\begin{rem} Notice that the previous theorem give us the GWP, in particular, for $f(x)=x^2$ but also for $f(x)=\sin(x)$ or $f(x)=\cos(x)$ as nonlinearities.
\end{rem}

\begin{rem}
We stress that Theorems \ref{MT1} and \ref{MT_gwp} give us the local and global well-posedness for $H^s(\R)$-perturbations, $s> 1/2$ and $s\geq 1$ respectively, of regular periodic solutions of the KdV equation, in particular, for $H^s(\R)$-perturbations of periodic traveling waves solutions. 
\end{rem}

From the above results we are able to deduce local well-posedness for  equation \eqref{ggkdv_v} on Zhidkov spaces. To this end, we introduce $\mathcal{Z}^s(\R)$ as the function space given by \[
\mathcal{Z}^s(\R):=\{\Psi\in\mathcal{D}'(\R):\, \Psi\in L^\infty(\R),\,\Psi'\in H^{s-1}(\R)\},
\]
endowed with the natural norm $\Vert\Psi\Vert_{\mathcal{Z}^s}:=\Vert \Psi\Vert_{L^\infty}+\Vert \Psi'\Vert_{H^{s-1}}$.
\begin{thm}\label{MT_Zhidkov}
Let $s>1/2$. Consider $f:\R\to\R$ to be any real-analytic function such that its Taylor expansion around zero has infinite radius of convergence. Then, for any $v_0\in \mathcal{Z}^s(\R)$ there exists $T=T(\Vert v_0\Vert_{\mathcal{Z}^s})>0$ and a solution to the IVP  \eqref{ggkdv_v}, such that \[
v\in C([0,T],\mathcal{Z}^s(\R))\cap L^2_TW^{s^-,\infty}_x \quad \hbox{and} \quad v-v_0\in C([0,T],H^s(\R)). 
\]
Furthermore, the solution is unique in the class \[
v(t)-v_0\in L^\infty((0,T),H^s(\R)).
\]
Also, the data-to-solution map $\Phi: v_0\mapsto v(t)$ is continuous from $\mathcal{Z}^s(\R)$ into $C([0,T],\mathcal{Z}^s(\R))$. Moreover, if $f(x)$ satisfies that $\vert f''(x)\vert\lesssim1$ for all $x\in\R$, then the solution $v(t)$ can be extended for all times $T>0$.
\end{thm}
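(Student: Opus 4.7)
The plan is to reduce Theorem~\ref{MT_Zhidkov} to Theorem~\ref{MT1} by splitting the Zhidkov datum $v_0$ into a smooth bounded background plus an $H^s$ correction. Fix a Littlewood--Paley cutoff $P_{\leq N}$ onto frequencies $\{|\xi|\leq N\}$, and set
\[
\tilde\Psi(t,x):=P_{\leq N}v_0(x), \qquad u_0:=P_{>N}v_0=v_0-\tilde\Psi.
\]
Since $v_0\in L^\infty(\R)\subset\mathcal{S}'(\R)$ and $P_{\leq N}$ is convolution with a Schwartz function whose Fourier transform has compact support, $\tilde\Psi$ is smooth and bounded with bounded derivatives of every order. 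In particular $\tilde\Psi\in L^\infty(\R,W^{k,\infty}(\R))$ for all $k$, and $\partial_x^3\tilde\Psi+\partial_xf(\tilde\Psi)\in L^\infty(\R,H^{k}(\R))$ for all $k$ (the analytic composition $f(\tilde\Psi)$ is harmless since $f$ is entire and $\tilde\Psi$, together with all its derivatives, is bounded). Hence the three hypotheses in \eqref{hyp_psi} are trivially satisfied. On the high-frequency side, the identity $\widehat{v_0}(\xi)=\widehat{v_0'}(\xi)/(i\xi)$ valid for $|\xi|\geq N$, together with Plancherel, gives $\|u_0\|_{H^s}^2\lesssim\|v_0'\|_{H^{s-1}}^2$, so $u_0\in H^s(\R)$.

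Applying Theorem~\ref{MT1} to this $\tilde\Psi$ and $u_0$ produces $T>0$ and a solution $u\in C([0,T],H^s)\cap L^2_TW^{s^-,\infty}_x\cap X^{s-1,1}_T$ of \eqref{gkdv_v}. Setting $v:=u+\tilde\Psi$, one checks directly from the equation satisfied by $u$ that $v$ solves \eqref{ggkdv_v} in the distributional sense with $v(0)=u_0+\tilde\Psi=v_0$; the regularity $v\in C([0,T],\mathcal{Z}^s)\cap L^2_TW^{s^-,\infty}_x$ and $v-v_0=u-u_0\in C([0,T],H^s)$ follows from the regularity of $u$, the smoothness of $\tilde\Psi$, and the embedding $H^s\hookrightarrow L^\infty$ for $s>1/2$. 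Unconditional uniqueness in the class $v-v_0\in L^\infty((0,T),H^s)$ reduces cleanly: if $\tilde v$ is another such solution, then $\tilde u:=\tilde v-\tilde\Psi=(\tilde v-v_0)+u_0\in L^\infty((0,T),H^s)$ satisfies the very same Cauchy problem \eqref{gkdv_v} (same background $\tilde\Psi$, same initial datum $u_0$), so the unconditional uniqueness part of Theorem~\ref{MT1} forces $\tilde u=u$, hence $\tilde v=v$.

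For the continuity of the data-to-solution map, given $v_0^{(n)}\to v_0$ in $\mathcal{Z}^s$ one uses the same $N$ in the decomposition for every $n$. Bernstein's inequality and the $L^\infty$ convergence yield $\tilde\Psi^{(n)}-\tilde\Psi=P_{\leq N}(v_0^{(n)}-v_0)\to 0$ in $W^{k,\infty}$ for all $k$, while the high-frequency Plancherel bound of the first paragraph applied to $v_0^{(n)}-v_0$ gives $u_0^{(n)}-u_0=P_{>N}(v_0^{(n)}-v_0)\to 0$ in $H^s$. Invoking the continuous dependence in Theorem~\ref{MT1} jointly in $u_0$ and in the norms of $\Psi$ appearing in \eqref{hyp_psi} — implicit in its energy-estimate proof and providing uniform control of $T$ and of the $H^s$-Lipschitz constants on bounded sets — one obtains $u^{(n)}\to u$ in $C([0,T],H^s)$ on a common time interval, hence $v^{(n)}\to v$ in $C([0,T],\mathcal{Z}^s)$. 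Finally, when $s\geq 1$ and $|f''|\lesssim 1$, the datum $u_0\in H^s$ meets the assumptions of Theorem~\ref{MT_gwp}, so $u$, and consequently $v=u+\tilde\Psi$, extends to every time $T>0$.

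The main obstacle is this continuous-dependence step: because $\tilde\Psi$ is itself a function of $v_0$, a perturbation of the initial datum in $\mathcal{Z}^s$ produces a simultaneous perturbation of both $u_0$ and of the background for the auxiliary problem \eqref{gkdv_v}, so one cannot apply Theorem~\ref{MT1} as a pure black box in $u_0$ alone. Extracting the quantitative joint continuity from the energy-estimate scheme used in Theorem~\ref{MT1} is the sole new input required; existence, uniqueness, and the global extension when $|f''|\lesssim 1$ are direct transcriptions of the corresponding statements for $u$.
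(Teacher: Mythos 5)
Your reduction is correct and follows the same strategy as the paper's: split $v_0\in\mathcal{Z}^s$ as $\tilde\Psi+u_0$, with $\tilde\Psi$ a smooth, bounded background satisfying \eqref{hyp_psi} and $u_0\in H^s$, and then invoke Theorems~\ref{MT1} and~\ref{MT_gwp} for the auxiliary Cauchy problem \eqref{gkdv_v}. The paper's Lemma~\ref{deco} (following \cite{Ga,IoLiSc}) implements the split by convolution with the Gaussian $k(x)=(4\pi)^{-1/2}e^{-x^2/4}$, so $u_0$ has symbol $(1-e^{-\xi^2})\widehat{v_0}(\xi)$ and lies in $H^s$ because $(1-e^{-\xi^2})/\xi$ is bounded; your Littlewood--Paley variant $\tilde\Psi=P_{\leq N}v_0$, $u_0=P_{>N}v_0$ achieves the same with a compactly supported multiplier and is if anything slightly more elementary, since the multiplier $\xi^{-1}$ acting on $P_{>N}v_0'$ is manifestly bounded. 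Both splittings are linear and map continuously $\mathcal{Z}^s\to\mathcal{Z}^\infty\times H^s$, which is what the argument needs. The one place where your account is more candid than the paper's is the data-to-solution continuity: as you correctly note, perturbing $v_0$ in $\mathcal{Z}^s$ moves both $u_0$ and $\tilde\Psi$, whereas Theorem~\ref{MT1} as stated only varies $u_0$ for a fixed $\Psi$. The paper's proof of Theorem~\ref{MT_Zhidkov} elides this point; the joint continuity it implicitly relies on is furnished by the argument of Section~\ref{section_MT1}, which already introduces approximating backgrounds $\Psi_n\to\Psi$ in the $W^{s+1^+,\infty}$ and $|||\cdot|||_{s^+}$ norms and controls the constants in Propositions~\ref{prop_energy_est_impro} and~\ref{prop_diff_sol} uniformly along that sequence, but this dependence is not spelled out. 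Your observation that joint continuity in $(u_0,\Psi)$ must be extracted from the energy scheme rather than read off the theorem statement is the correct diagnosis, and your proposed route to supply it is the right one.
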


Our method of proof relies in the improvements of the energy method, recently developed in \cite{MoVe,MoPiVe,MoPiVe3}, along with symmetrization arguments previously used in \cite{CoKeStTaTa}, for example. However, due to the presence of $\Psi(t,x)$ (which breaks the symmetry) and the general nonlinearity, the present analysis shall be more involved that the previous cases.

\medskip

At this point it is important to remark that local well-posedness in such a general framework has never been established for equation \eqref{gkdv_v} before. However, the smooth case is by no mean a new result, but rather a suitable rewrite of the previous proofs (see \cite{AbBoFeSa,IoLiSc} for example). For the sake of completeness, we prefer to state this theorem and give a brief proof of its most important parts though (see Section \ref{section_MT_smooth}). In fact, the key point to prove LWP for equation \eqref{ggkdv_v} in the smooth case are the commutator estimates which, in the case of equation  \eqref{gkdv_v}, can be performed with almost no changes (with respect to \cite{AbBoFeSa,IoLiSc}) thanks to our hypothesis on $\Psi$.

\medskip

\subsection{Previous literature}

Concerning the local well-posedness of equation \eqref{kdv} there exists a  bast literature for each case of $k\in\Z_+$. In the case where $k=1$, Kenig, Ponce and Vega \cite{KePoVe3} showed the LWP in $H^s(\R)$ for $s\geq -3/4$ via the contraction principle. This result is sharp \cite{ChCoTa}, in the sense that the flow map fails to be uniformly continuous in $H^s(\R)$ for $s<-3/4$. Then, global well-posedness (GWP) was proved for $s>-3/4$ by using the $I$-method  \cite{CoKeStTaTa}. Without asking for the uniform continuity but just continuity of the data-to-solution map, by using the complete integrability of the equation, Killip and Visan \cite{KiVi} showed GWP in $H^{-1}(\R)$, which is the lowest index one can obtain due to the result of Molinet \cite{Mo2} which ensures that this map cannot be continuous below $H^{-1}(\R)$. On the other hand, in  \cite{Zho} Zhou demonstrated the unconditional uniqueness in $L^2(\R)$. In the periodic case, LWP was proved in $H^s(\T)$ for $s\geq -1/2$ by Kenig, Ponce and Vega \cite{KePoVe3}. These local-in-time solutions are
also shown to exist on an arbitrary time interval. Moreover, the unconditional uniqueness in $L^2(\T)$ was established in \cite{BaIlTi}. 

\medskip

Concerning the mKdV case, that is when $k=2$, the result of Kenig-Ponce-Vega ensures the LWP for $s\geq 1/4$ on the line \cite{KePoVe2}. It has been proven that this result is sharp in the sense that the flow map fails to be uniformly continuous in $H^s(\R)$ as soon as $s<1/4$, for both, the focusing mKdV \cite{KePoVe4} and the defocusing one \cite{ChCoTa}. Then, GWP was shown for $s>1/4$ in \cite{CoKeStTaTa} by using the $I$-method (see also \cite{FoLiPo}). Moreover, unconditional uniqueness in $H^s(\R)$ for $s>1/3$ was established by Molinet et al. in \cite{MoPiVe}, and recently improved for $s>1/4$ by Kwon et al. in \cite{KwOhYo}. On the other hand, in the periodic case, unconditional LWP for $s\geq 1/3$ was proved by Molinet et al. in \cite{MoPiVe2}, by using the improved energy method developed in \cite{MoVe} together with the construction of modified energies (see also \cite{NaTaTs}). Furthermore, global well-posedness has been shown in $H^s(\T)$ for $s\geq 1/2$ in \cite{CoKeStTaTa}. 

\medskip 

In the case where $k=3$, we refer to \cite{Gr} for the LWP in $H^s(\R)$ for $s\geq -1/6$, and to \cite{KePoVe2} for the local well-posedness  in the case where $k\geq 4$, up to the critical index given by the scaling invariance (inclusive). These results do not refer about the unconditional uniqueness.

\medskip

Regarding equation \eqref{gkdv_v}, as far as we know this equation has never been treated in such a general framework, and hence there is no abundant specific literature for it. However, in the case of the KdV equation, i.e. $f(x)=x^2$, with $\Psi=\Psi(x)$ being a time-independent function belonging to the Zhidkov class, we find the result of Iorio et al. \cite{IoLiSc} for regular data (see also \cite{BoRaSc}). To the best of our knowledge, the best result to date (in the previously mentioned framework) is given by Gallo \cite{Ga} where LWP was established for the KdV case for $s>1$ under the same hypothesis on $\Psi(x)$ as in the work of Iorio et al \cite{IoLiSc}. Note that Theorem \ref{MT_Zhidkov} extend both results \cite{Ga,IoLiSc} to the whole range $s\in (1/2,1]$ and also provides the GWP in the case $s\geq 1$. On the other hand, in the case of general nonlinearity $f(x)$, under some extra conditions concerning the values of $\Psi(x)$ at $\pm \infty$ and the value of the integral of $f(x)$ on the region $[\Psi(-\infty),\Psi(+\infty)]$, Zhidkov \cite{Zhi} established local well-posedness for data in $H^2(\R)$. In the same work, he also proved the $H^1(\R)$ orbital stability of Kinks of equation \eqref{gkdv_v} for $H^2$ solutions. Then, by using this stability property, he showed the global existence of $H^2$ solutions for small $H^1$-perturbations of such Kinks. In order to prove these results, Zhidkov assumed, among other hypotheses, that $\Psi'(x)>0$ for all $x\in\R$, and that $\Psi(x)$ converges exponentially fast to its limits at $\pm\infty$. As we already mentioned, Theorem \ref{MT1} contains (and improves) the results in \cite{BoRaSc,Ga,IoLiSc,Zhi}. In particular, notice that Theorem \ref{MT_Zhidkov} allows us to extend the existence and the stability result of Zhidkov by only considering $H^1$-solutions which are $H^1$-close to those Kinks. Finally, we point out that, in the case $ f(x)=x^2$,  Theorem \ref{MT1} is related to the existence problem for the KdV equation with variable coefficients, which has recently been treated by a similar approach in \cite{MoTaZa}.

\medskip

%The main difficulty in estimating $\tfrac{d}{dt}\Vert P_Nu\Vert_{H^s_x}^2$ is to handle the resonant term, which is given by \[
%\mathrm{high}\times \mathrm{high}\times\mathrm{low}\times....\times \mathrm{low}
%\]
%interactions.

\subsection{Organization of this paper}

This paper is organized as follows. In Section \ref{preliminaries} we introduce all the notations that we shall use in the sequel, and then we state a series of preliminary results needed in our analysis. In Section \ref{energy_section} we prove the main apriori energy estimates for solutions and for the difference of solutions. In Section \ref{section_MT1} we establish Theorem \ref{MT1}. Then, in Section \ref{section_MT_smooth} we sketch the proof of the LWP in the smooth case (see Theorem \ref{MT2} below). Finally, in Section \ref{section_MT_gwp} we prove the global well-posedness result, i.e. Theorem \ref{MT_gwp}.

\medskip

\section{Preliminaries}\label{preliminaries}

\subsection{Basic notations}

For any pair of positive numbers $a$ and $b$, the notation $a\lesssim b$ means that there exists a positive constant $c$ such that $a\leq cb$. We also denote by $a\sim b$ when $a\lesssim b$ and $b\lesssim a$. Moreover, for $\alpha\in\R$, we denote by $\alpha^+$, respectively $\alpha^-$,  a number slightly greater,
respectively lesser, than $\alpha$. Furthermore, we shall occasionally use the notation $F(x)$ to denote a primitive of the nonlinearity $f(x)$, that is,  $F(s)=\int_0^s f(s')ds'$.

\medskip

Now, for $u(t,x)\in \mathcal{S}'(\R^2)$, $\mathcal{F}u=\hat{u}$ shall denote its space Fourier transform,
whereas $\mathcal{F}_tu$, respectively $\mathcal{F}_{t,x}u$, shall denote its time Fourier transform,
respectively space-time Fourier transform. Additionally, for $s\in\R$, we introduce the Bessel and Riesz potentials of order $-s$, namely $J^s_x$ and $D^s_x$, by (respectively) \[
J^s_xu := \mathcal{F}^{-1}\big((1+\vert\xi\vert^2)^{s/2}\mathcal{F}u\big) \quad \hbox{ and }\quad D^s_xu:= \mathcal{F}^{-1}\big(\vert\xi\vert^s\mathcal{F}u\big).
\]
We also denote by $U(t)$ the unitary group associated with the linear part of \eqref{kdv}, that is, the Airy group, \[
U(t)g:=e^{-\partial_x^3}g=\mathcal{F}^{-1}\big(e^{it\xi^3}\mathcal{F}g\big).
\]
On the other hand, throughout this work we consider a fixed smooth cutoff function $\eta$ satisfying \begin{align}\label{def_eta}
\eta\in C^\infty_0(\R),\quad 0\leq \eta\leq 1, \quad \eta\big\vert_{[-1,1]}=1 \quad \hbox{and} \quad \supp\eta\subset[-2,2].
\end{align}
We define $\phi(\xi):=\eta(\xi)-\eta(2\xi)$ and, for $\ell\in\Z$, we denote by $\phi_{2^\ell}$ the function given by \[
\phi_{2^\ell}(\xi):=\phi\big(2^{-\ell}\xi\big).
\]
Additionally, we shall denote by $\psi_{2^\ell}$ the function given by \[
\psi_{2^\ell}(\tau,\xi):=\phi_{2^\ell}\big(\tau-\xi^3\big) \quad \hbox{for }\, \ell\in\N\setminus\{0\} \quad \,\hbox{and} \,\quad \psi_1(\tau,\xi):=\eta(\tau-\xi^3).
\]
Any summations over capitalized variables such as $N$, $L$, $K$ or $M$ are presumed to be dyadic. Unless stated otherwise, we work with homogeneous dyadic decomposition for the space-frequency and time-frequency variables, and nonhomogeneous decompositions for modulation variables, i.e., these variables range over numbers of the form $\{2^\ell:\,\ell\in \Z\}$ and $\{2^\ell:\, \ell\in\N\}$, respectively. We denote these sets by $\mathbb{D}$ and $\mathbb{D}_\mathrm{nh}$ respectively. Then, with the previous notations and definitions, we have that \[
\sum_{N\in \mathbb{D}}\phi_{N}(\xi)=1 \, \hbox{ for all }\, \xi\in\R\setminus\{0\} \quad \hbox{ and } \quad \supp\phi_{N}\subset\{\tfrac{1}{2}N\leq \vert\xi\vert\leq 2N\}.
\]
In the same fashion, it also follows that \[
\sum_{L\in\mathbb{D}_{\mathrm{nh}}} \psi_{L}(\tau,\xi)=1 \, \hbox{ for all } \, (\tau,\xi)\in\R^2.
\]
We define the Littlewood-Paley multipliers by the following identities \begin{align}\label{def_pr_P_Q}
P_Nu:=\mathcal{F}_x^{-1}\big(\phi_N\mathcal{F}u\big), \quad R_Ku:=\mathcal{F}_t^{-1}\big(\phi_K\mathcal{F}_tu\big)  \quad Q_Lu:=\mathcal{F}_{t,x}^{-1}\big(\psi_L\mathcal{F}_{t,x}u\big).
\end{align}
With these definitions at hand, we introduce the operators \[
P_{\geq N}:=\sum_{M\geq N}P_M, \quad P_{\leq N}:=\sum_{M\leq N}P_M, \quad Q_{\geq L}:=\sum_{\tilde{L}\geq L}Q_{\tilde{L}}, \quad Q_{\leq \tilde{L}}:=\sum_{\tilde{L}\leq L}Q_{\tilde{L}}.
\]
In addition, we borrow some notations from \cite{Ta}. For $k\geq 2$ natural number and $\xi\in\R$, we denote by $\Gamma^k(\xi)$ the $(k-1)$-dimensional affine-hyperplane of $\R^k$ given by \[
\Gamma^k(\xi):=\{(\xi_1,...,\xi_k)\in\R^k:\, \xi_1+...+\xi_k=\xi\},
\]
endowed with the natural measure \[
\int_{\Gamma^k(\xi)}F(\xi_1,...,\xi_k)d\Gamma^k(\xi):=\int_{\R^k}F(\xi_1,...,\xi_{k-1},\xi-\xi_1-...-\xi_{k-1})d\xi_1...d\xi_{k-1},
\]
for any function $F:\Gamma^k(\xi)\to \mathbb{C}$. Moreover, when $\xi=0$ we shall simply denote by $\Gamma^k=\Gamma^k(0)$ with the obvious modifications.

\medskip 

To finish this first subsection, we introduce the notation for the pseudoproduct operator that we shall repeatedly use in the sequel. Let $\chi$ be a (possibly complex-valued) measurable bounded function on $\R^2$. We define the operator $\Pi=\Pi_\chi$ on $\mathcal{S}(\R)^2$ by the expression  \begin{align}\label{pseudo_def}
\mathcal{F}\big(\Pi(f,g)\big)(\xi):=\int_\R \hat{f}(\xi_1)\hat{g}(\xi-\xi_1)\chi(\xi,\xi_1)d\xi_1.
\end{align}
This bilinear operator behaves as a product operator in the sense that it satisfies the following properties
\begin{align*}
\Pi(f,g)=fg \, \hbox{ when } \, \chi\equiv 1, \quad \hbox{ and }\quad \int \Pi_\chi(f,g)h=\int f\Pi_{\chi_1}(g,h)=\int \Pi_{\chi_2}(f,h)g,
\end{align*}
where $\chi_1(\xi,\xi_1)=\overline{\chi}(\xi_1,\xi)$ and $\chi_2(\xi,\xi_1)=\overline{\chi}(\xi-\xi_1,\xi)$ for all trio of functions $f,g,h\in\mathcal{S}(\R)$. Throughout this paper we shall also use pseudoproduct operators with $k$-entries, which are defined as in \eqref{pseudo_def} with the obvious modifications.

\smallskip

\subsection{Function spaces}\label{function_spaces_section}

For $s,b\in\R$ we define the Bourgain space $X^{s,b}$ associated with the linear part of \eqref{kdv} as the completion of the Schwartz space $\mathcal{S}(\R^2)$ under the norm \[
\Vert u\Vert_{X^{s,b}}^2:=\int_{\R^2}(1+\vert \tau-\xi^3\vert )^{2b}(1+\vert\xi\vert)^{2s}\vert \mathcal{F}_{t,x}[u](\tau,\xi)\vert^2d\xi d\tau.
\]
We recall that these spaces satisfy \[
\Vert u\Vert_{X^{s,b}}\sim \Vert U(-t)u\Vert_{H^{s,b}_{t,x}}, \quad \hbox{ where } \quad \Vert u\Vert_{H^{s,b}_{t,x}}:=\big\Vert J^s_xJ^b_t u\big\Vert_{L^2_{t,x}} .
\]
Additionally, we define the frequency-enveloped spaces associated with $H^s(\R)$ as follows: Let $s\in\R$ and $\kappa>1$ fixed. Consider a sequence $\{\omega_N\}_{N\in\mathbb{D}}$ of positive real numbers satisfying that $\omega_N\leq \omega_{2N}\leq 2^\varepsilon\omega_{N}$, for some $\varepsilon>0$ such that $\varepsilon<\min\{\delta_1,\delta_2\}$, where $\delta_1$ and $\delta_2$ are the small numbers  associated with the choices we make in the hypotheses \[
\Psi\in L^\infty_tW^{s+1^+,\infty}_x \quad \hbox{and} \quad (\partial_t\Psi+\partial_x^3\Psi+\partial_xf(\Psi))\in L^\infty_tH^{s^+}_x.
\]
In other words, if we suppose $\Psi\in L^\infty_tW^{s+1+\delta_1,\infty}_x$ and $(\partial_t\Psi+\partial_x^3\Psi+\partial_xf(\Psi))\in L^\infty_tH^{s+\delta_2}_x$, for some $\delta_1,\delta_2>0$ small, then we assume, in particular, that $\omega_N$ satisfies \[
\dfrac{\omega_N}{N^{\delta_*}}\xrightarrow{N\to+\infty}0, \quad \delta_*:=\min\{\delta_1,\delta_2\}.
\]
Furthermore, we also assume that $\omega_N\to1$ as $N\to0$. Then, we define the space $H^s_\omega(\R)$ associated with $\{\omega_N\}$ as the completion of the Schwarz space $\mathcal{S}(\R)$ under the norm \[
\Vert f\Vert_{H^s_\omega}^2:=\sum_{N}\omega_N^2\Vert P_Nf\Vert_{H^s}^2 \sim \sum_{N}\omega_N^2N^{2s}\Vert P_Nf\Vert_{L^2}^2.
\]
Of course, by definition we have $H^s_\omega(\R) \subseteq H^s(\R)$. Moreover, if $\omega_N=1$ for all $N\in \mathbb{D}$, then $H^s_\omega=H^s$. The goal we seek by introducing these  frequency-enveloped spaces is to be able to prove the continuity part in Theorem \ref{MT1}.

\medskip

Finally, we define the restriction-in-time version of all of the above spaces. Let $T>0$ fixed, and consider $F$ to be any normed space of space-time functions. We define its restriction in time version $F_T$ as the space of functions $u:[0,T]\times \R\to\R$ satisfying \[
\Vert u\Vert_{F_T}:=\inf\big\{\Vert \tilde{u}\Vert_{F}: \, \tilde{u}:\R\times\R\to\R, \hbox{ with } \tilde{u}=u \hbox{ on } [0,T]\times \R\big\}<+\infty.
\]

\smallskip

\subsection{Extension operator}

In this subsection we introduce the extension operator that we shall use in the sequel. We borrow this definition from \cite{MoPiVe}. The key property of this operator is that it defines a bounded operator from $L^\infty_TH^s_\omega \cap X^{s-1,1}_T\cap L^2_TW^{r,\infty}_x$ into $L^\infty_tH^s_\omega \cap X^{s-1,1}\cap L^2_tW^{r,\infty}_x$ with $r<s$.

\begin{defn}
Let $T\in(0,2)$ and $u:[0,T]\times \R\to\R$ given. We define the extension operator $\rho_T$ by the following identity \begin{align}\label{rho_def}
\rho_T[u](t):=U(t)\eta(t)U(-\mu_T(t))u(\mu_T(t)),
\end{align}
where $\eta$ corresponds to the function given in \eqref{def_eta} and $\mu_T$ is the following continuous function
\[
\mu_T(t):=\begin{cases}
0 & \hbox{if }\ t\leq0,
\\ t & \hbox{if }\ t\in(0,T),
\\ T & \hbox{if }\ t\geq T.
\end{cases}
\]
\end{defn}

\begin{rem}
Notice that, directly from the definition, we have $\rho_T[u](t,x)=u(t,x)$ for all $(t,x)\in [0,T]\times \R$.
\end{rem}

The next lemma give us the main properties of this operator (see \cite{MoPiVe}).
\begin{lem}
Let $T\in(0,2)$ fixed. Consider $(s,r,\theta,b)\in\R^4$ satisfying $r<s$ and $b\in (1/2,1]$. Then, the following holds: \[
\rho_T: L^\infty_T H^s_\omega\cap X^{\theta,b}_T\cap L^2_TW^{r,\infty}_x\mapsto L^\infty_tH^s_\omega \cap X^{\theta,b}\cap L^2_tW^{r,\infty}_x.  
\]
In other words, we have the following inequality  \[
\Vert \rho_T[u]\Vert_{L^\infty_tH^s_\omega}+\Vert \rho_T[u]\Vert_{X^{\theta,b}}+\Vert \rho_T[u]\Vert_{L^2_tW^{r,\infty}_x}\lesssim \Vert u\Vert_{L^\infty_TH^s_\omega}+\Vert u\Vert_{X^{\theta,b}_T}+\Vert u\Vert_{L^2_TW^{r,\infty}_x}.
\]
Moreover, the implicit constant involved in the latter inequality can be chosen independent of $(T,s,r,\theta,b)$.
\end{lem}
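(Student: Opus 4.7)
The plan is to treat each of the three target norms separately, exploiting the piecewise structure of the extension. Set $w(t):=U(-\mu_T(t))u(\mu_T(t))$, so that $\rho_T[u](t)=U(t)\eta(t)w(t)$. Since $\mu_T$ is constant outside $(0,T)$, the function $w$ equals $u(0)$ on $(-\infty,0]$, $U(-t)u(t)$ on $(0,T)$, and $U(-T)u(T)$ on $[T,\infty)$; the cutoff $\eta$ then restricts the time support of $\rho_T[u]$ to $[-2,T+2]$, and on the three subintervals $\rho_T[u]$ reduces respectively to $\eta(t)U(t)u(0)$, $\eta(t)u(t)$, and $\eta(t)U(t-T)u(T)$.

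The $L^\infty_tH^s_\omega$-bound is immediate because $U(t)$ is an isometry on any spatially Fourier-defined space, in particular on $H^s_\omega$, and $\|\eta\|_\infty\le 1$. For the $L^2_tW^{r,\infty}_x$-bound, the middle contribution is dominated directly by $\|u\|_{L^2_TW^{r,\infty}_x}$. On the two flanking compact intervals one is dealing with purely linear Airy evolutions of $u(0)$ and $u(T)$; I would combine the Kenig--Ponce--Vega Strichartz estimate for the Airy group with H\"older in time on the bounded window to obtain $\|U(\cdot)f\|_{L^2([-2,0];W^{r,\infty}_x)}\lesssim\|f\|_{H^{\sigma}}$ for some $\sigma<s$ (the room comes from $r<s$), after which the trace $\|u(0)\|_{H^{\sigma}}\le\|u\|_{L^\infty_TH^s_\omega}$ closes the estimate, and analogously at $t=T$.

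The core estimate is the one in $X^{\theta,b}$. Via the identity $\|v\|_{X^{\theta,b}}\sim\|U(-t)v\|_{H^{\theta,b}_{t,x}}$ the task reduces to bounding $\eta(t)w(t)$ in $H^{\theta,b}_{t,x}$. I would pick an extension $\tilde u$ of $u$ to $\R$ with $\|\tilde u\|_{X^{\theta,b}}\le 2\|u\|_{X^{\theta,b}_T}$, set $\tilde w(t):=U(-t)\tilde u(t)$, which agrees with $w$ on $(0,T)$ and satisfies $\|\tilde w\|_{H^{\theta,b}_{t,x}}=\|\tilde u\|_{X^{\theta,b}}$, and split $\eta(t)w(t)=\eta(t)\tilde w(t)+\eta(t)\bigl(w(t)-\tilde w(t)\bigr)$. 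The first piece is bounded by $\|\eta\|_{H^1_t}\|\tilde w\|_{H^{\theta,b}_{t,x}}\lesssim\|u\|_{X^{\theta,b}_T}$. The second piece is supported on $[-2,0]\cup[T,T+2]$, where it reduces to $\eta(t)[\tilde w(0)-\tilde w(t)]$ and $\eta(t)[\tilde w(T)-\tilde w(t)]$ respectively; each summand is controlled by $\|\tilde w(0)\|_{H^\theta}+\|\tilde w\|_{H^{\theta,b}_{t,x}}\lesssim\|\tilde u\|_{X^{\theta,b}}$, where the trace estimate $\|v(0)\|_{H^\theta}\lesssim\|v\|_{X^{\theta,b}}$ is valid precisely because $b>1/2$.

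The main obstacle is the clean handling of the fractional time-Sobolev regularity of $w$ across the two breakpoints $t=0$ and $t=T$: $w$ is continuous there but only Lipschitz, so one must carefully isolate the time-trace contributions $\tilde w(0)$ and $\tilde w(T)$ without losing the $H^{\theta,b}_{t,x}$ integrability — this is precisely the technical step following the scheme of \cite{MoPiVe}. All implicit constants remain uniform in $T\in(0,2)$ since $\supp\eta$ is fixed, and uniform in the parameters $(s,r,\theta,b)$ inside their allowed ranges since no interpolation endpoint or embedding index depends on them.
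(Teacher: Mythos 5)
The paper does not give a proof of this lemma (it is quoted from \cite{MoPiVe}), so I evaluate your argument on its own terms. Your handling of the $L^\infty_tH^s_\omega$ bound (unitarity of $U(t)$ plus $\vert\eta\vert\le1$) and of the $L^2_TW^{r,\infty}_x$ bound (middle window directly, flanking windows via the Kenig--Ponce--Vega estimate combined with H\"older in time and the room $r<s$) is correct in outline. The genuine gap is in the $X^{\theta,b}$ part, precisely at the step you yourself label ``the main obstacle'' and then do not carry out. After writing $\eta w=\eta\tilde w+\eta(w-\tilde w)$ and observing that on $(-\infty,0]$ the second piece is $\eta(t)\bigl(\tilde w(0)-\tilde w(t)\bigr)$, you assert that ``each summand is controlled by $\Vert\tilde w(0)\Vert_{H^\theta}+\Vert\tilde w\Vert_{H^{\theta,b}_{t,x}}$.'' If by ``each summand'' you mean the split into $\eta(t)\tilde w(0)\mathds{1}_{t\le0}$ and $\eta(t)\tilde w(t)\mathds{1}_{t\le0}$, the argument fails: since $\eta(0)=1$ and $\tilde w(0)\ne0$, each of these two functions has a jump discontinuity at $t=0$ and therefore does not lie in $H^b_t$ when $b>1/2$. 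If instead you mean the undivided flanking piece, the claim is the right one but no justification is offered.

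The actual technical content of the lemma is exactly the missing step: one must exploit the cancellation that $\eta(t)\bigl(\tilde w(0)-\tilde w(t)\bigr)$ vanishes at $t=0$, and prove that for $b\in(1/2,1]$ the sharp cutoff $g\mapsto g\,\mathds{1}_{t<0}$ is bounded on the subspace $\{g\in H^b_t(\R;H^\theta_x):g(0)=0\}$, uniformly in $T$. This is a Hardy-type inequality (for instance via the Gagliardo seminorm characterisation of $H^b$ and the estimate $\int\vert g(t)-g(0)\vert^2\vert t\vert^{-2b}\,dt\lesssim\Vert g\Vert_{H^b}^2$ for $b\in(1/2,1)$), and without it the proof does not close. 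Two further inaccuracies: $w=\tilde w\circ\mu_T$ is not ``Lipschitz'' at the breakpoints; it inherits only the $C^{b-1/2}_t$-H\"older regularity of $\tilde w\in H^b_t$. And the asserted uniformity of the implicit constant in all of $(s,r,\theta,b)$ cannot hold as stated: your own argument uses $r<s$ in a way whose constant degenerates as $r\uparrow s$, and the trace and cutoff estimates degenerate as $b\downarrow1/2$; at best the constant is uniform on compact subsets of the admissible parameter region.
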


\smallskip

\subsection{Resolution space}

From now on, for any $s\in\R$ and any sequence $\{\omega_N\}_{N\in\mathbb{D}}$ satisfying the hypotheses in Section \ref{function_spaces_section}, we define the resolution space $\mathcal{M}^s_\omega$ being \[
\mathcal{M}^{s}_\omega:=L^\infty_tH^s_\omega\cap X^{s-1,1},
\]
endowed with the natural norm
\begin{align}\label{mst_norm_def}
\Vert u\Vert_{\mathcal{M}^s_\omega}:=\Vert u\Vert _{L^\infty_t H^s_\omega}+\Vert u\Vert_{X^{s-1,1}}.
\end{align}
When $\omega_N\equiv 1$ we simply write $\mathcal{M}^s=\mathcal{M}^s_\omega$. Before going further we recall the following basic lemma concerning Sobolev spaces. 
\begin{lem}[See \cite{Ad}, for example]\label{sob_algebra}
Let $a,b,c\in\R$ a triplet of real numbers satisfying \[
a\geq c,\quad b\geq c \quad a+b\geq 0\quad \hbox{and}\quad a+b-c>\tfrac{n}{2}.
\]
Then, the map $(f,g)\mapsto f\cdot g$ is a continuous bilinear form from $H^a(\R^n)\times H^b(\R^n)$ into $H^c(\R^n)$.
\end{lem}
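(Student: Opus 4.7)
The plan is to proceed via the Littlewood--Paley paraproduct decomposition, the standard tool for multiplication estimates of this kind. Decompose $f = \sum_{N \in \mathbb{D}} P_N f$ and $g = \sum_{M \in \mathbb{D}} P_M g$ (using the multipliers introduced in \eqref{def_pr_P_Q}, adapted to $\R^n$) and split the resulting double sum according to whether $M \ll N$, $N \ll M$, or $N \sim M$:
\begin{equation*}
fg = \Pi_{HL}(f,g) + \Pi_{LH}(f,g) + \Pi_{HH}(f,g).
\end{equation*}
Since the hypotheses on $(a,b,c)$ are symmetric in $(a,f) \leftrightarrow (b,g)$, it suffices to control the high--low piece $\Pi_{HL}(f,g)$ and the high--high piece $\Pi_{HH}(f,g)$.

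For $\Pi_{HL}(f,g) = \sum_N P_N f \cdot P_{\ll N} g$, each summand has spatial Fourier support in an annulus of scale $\sim N$, so Littlewood--Paley almost-orthogonality gives
\begin{equation*}
\|\Pi_{HL}(f,g)\|_{H^c}^2 \lesssim \sum_N (1+N)^{2c} \|P_N f\|_{L^2}^2 \|P_{\ll N} g\|_{L^\infty}^2.
\end{equation*}
Bernstein's inequality yields $\|P_M g\|_{L^\infty} \lesssim M^{n/2} \|P_M g\|_{L^2}$, and Cauchy--Schwarz in the $M$-sum then controls $\|P_{\ll N} g\|_{L^\infty}$ by $\|g\|_{H^b}$ times at worst $N^{(n/2 - b)_+ + \varepsilon}$. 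Combining with $c \leq a$ and the strict inequality $a+b-c > n/2$, one absorbs the extra powers of $N$ into $\|f\|_{H^a}^2$ after summation, yielding $\|\Pi_{HL}(f,g)\|_{H^c} \lesssim \|f\|_{H^a}\|g\|_{H^b}$.

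The more delicate term is $\Pi_{HH}(f,g) = \sum_{N \sim M} P_N f \cdot P_M g$, since this product can now produce arbitrarily low output frequency $K \lesssim N$. Projecting onto frequency $K$ and applying Bernstein in the reverse direction $\|P_K h\|_{L^2} \lesssim K^{n/2} \|h\|_{L^1}$, followed by H\"older, gives
\begin{equation*}
\|P_K \Pi_{HH}(f,g)\|_{L^2} \lesssim K^{n/2} \sum_{N \gtrsim K} \|P_N f\|_{L^2} \|\tilde P_N g\|_{L^2}.
\end{equation*}
Multiplying by $(1+K)^c$, using $a+b \geq 0$ to distribute regularity positively, and invoking $a+b-c > n/2$ together with Cauchy--Schwarz in $N$, one recovers the bound $\|f\|_{H^a}\|g\|_{H^b}$ after taking the $\ell^2_K$-norm.

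The main obstacle is precisely this high--high regime: the strict gap $a+b-c > n/2$ is used critically to ensure convergence of the dyadic sum in $N$, while the threshold conditions $a \geq c$, $b \geq c$, and $a+b \geq 0$ intervene only to keep the Sobolev exponents coherent in borderline cases where one of the Littlewood--Paley pieces is at low frequency. All remaining estimates follow from routine applications of frequency-localized H\"older and Bernstein inequalities.
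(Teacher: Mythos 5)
The paper does not actually prove this lemma; it is stated with a pointer to Adams' book and then used as a black box, so there is no in-paper argument to compare yours against. Your Littlewood--Paley paraproduct sketch is correct and is the standard modern proof of this product law. One small imprecision worth flagging: in the high--high block, the condition $a+b\geq 0$ is not really used to ``distribute regularity positively'' but rather to control the very low output frequencies $K\lesssim 1$. There the prefactor $\langle K\rangle^{c}K^{n/2}$ is $O(1)$ and one is left with $\sum_{N}\langle N\rangle^{-(a+b)}\alpha_N\beta_N$, where $\alpha_N=\langle N\rangle^{a}\|P_Nf\|_{L^2}$ and $\beta_N=\langle N\rangle^{b}\|\tilde P_Ng\|_{L^2}$; boundedness of the weight $\langle N\rangle^{-(a+b)}$, i.e.\ $a+b\geq 0$, is exactly what lets Cauchy--Schwarz in $N$ close this sum (and it also keeps the tail $\sum_{N\gtrsim K}N^{-2(a+b)}$ summable in the regime $c+n/2<0$). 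For $K\gtrsim 1$ the strict gap furnishes $K^{-\delta}$ decay with $\delta=a+b-c-n/2>0$ and hence $\ell^2_K$-summability. In the high--low block, $a\geq c$ is what closes the case $b>n/2$ where $\|P_{\ll N}g\|_{L^\infty}\lesssim\|g\|_{H^b}$ uniformly, while $a+b-c>n/2$ takes over when $b\leq n/2$; your summary captures this correctly. Modulo such routine clarifications and the usual homogeneous/inhomogeneous low-frequency adjustments, the argument is complete and correct.
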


The following lemma ensures us that $L^\infty_TH^s_{\omega}$-solutions also belong to $\mathcal{M}^s_{\omega,T}$, whereas the difference of two solutions in $L^\infty_TH^{s-1}_x$ take place in $\mathcal{M}^{s-1}_{T}$.
\begin{lem}\label{mst_basic_lemma}
Let $s>1/2$ and $T\in(0,2)$  given. Let $u\in L^\infty((0,T),H^s_\omega(\R))$ be a solution to equation \eqref{gkdv_v} with initial data $u_0\in H^s_\omega(\R)$. Then, $u\in\mathcal{M}^s_{\omega,T}$ and the following inequality holds \begin{align}\label{umt}
\Vert u\Vert_{\mathcal{M}^s_{\omega,T}}&\lesssim \Big(1+T^{1/2}\mathcal{F}_1\big(\Vert u\Vert_{L^\infty_TH^s_x},\Vert \Psi\Vert_{L^\infty_TW^{s^+,\infty}_x}\big)\Big)\Vert u\Vert_{L^\infty_TH^s_\omega}\nonumber
\\ & \qquad +T^{1/2}\Vert \partial_t\Psi+\partial_x^3\Psi+\partial_xf(\Psi)\Vert_{L^\infty_TH^{s-1}_x},
\end{align}
where $\mathcal{F}_1:\R^2\to\R_+$ is a smooth function. Moreover, for any pair $u,v\in L^\infty((0,T),H^s(\R))$ solutions to equation \eqref{gkdv_v} associated with initial data $u_0,v_0\in H^s(\R)$, the following holds: \begin{align}\label{umt_2}
\Vert u-v\Vert_{\mathcal{M}^{s-1}_{T}}\lesssim \Big(1+T^{1/2}\mathcal{F}_2\big(\Vert u\Vert_{L^\infty_TH^s_x},\Vert v\Vert_{L^\infty_TH^s_x},\Vert \Psi\Vert_{L^\infty_TW^{s^+,\infty}_x}\big)\Big)\Vert u-v\Vert_{L^\infty_TH^{s-1}_x},
\end{align}
for some smooth function $\mathcal{F}_2:\R^3\to\R_+$.
\end{lem}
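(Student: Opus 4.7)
The plan is to split the proof into two parts, one for each inequality, with a common thread: express $u$ (or the difference $w=u-v$) via the Duhamel formula and reduce the $X^{s-1,1}_T$ (resp.\ $X^{s-2,1}_T$) bound to an $L^2_T H^{s-1}_x$ (resp.\ $L^2_T H^{s-2}_x$) estimate on the source term, the latter being controlled by an $L^\infty_T$-norm at a cost of $T^{1/2}$. Throughout, the $L^\infty_T H^s_\omega$ (resp.\ $L^\infty_T H^{s-1}$) component of the $\mathcal{M}^{\cdot}_{\cdot,T}$-norm is bounded trivially, and I will use $\omega_N\ge 1$, which follows from $\omega_N\le \omega_{2N}$ together with the normalization $\omega_N\to 1$ as $N\to 0$.

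For the first inequality, from Remark \ref{rem_spaces_ic_duhamel} the solution $u$ satisfies Duhamel on $[0,T]$:
\[
u(t)=U(t)u_0-\int_0^t U(t-s)\Big[\partial_x\big(f(u+\Psi)-f(\Psi)\big)+G\Big](s)\,ds, \qquad G:=\partial_t\Psi+\partial_x^3\Psi+\partial_x f(\Psi).
\]
Introducing a standard time cutoff $\eta_T$ equal to $1$ on $[-T,T]$ and extending the source term by zero outside $[0,T]$, the classical linear Bourgain estimates
\[
\|\eta_T(t)U(t)u_0\|_{X^{s-1,1}}\lesssim \|u_0\|_{H^{s-1}},\qquad \Big\|\eta_T(t)\int_0^t U(t-\cdot)F(\cdot)\,d\cdot\Big\|_{X^{s-1,1}}\lesssim \|F\|_{L^2_t H^{s-1}_x}
\]
yield, after passing to the restriction-in-time space,
\[
\|u\|_{X^{s-1,1}_T}\lesssim \|u\|_{L^\infty_T H^s_\omega}+T^{1/2}\|f(u+\Psi)-f(\Psi)\|_{L^\infty_T H^s_x}+T^{1/2}\|G\|_{L^\infty_T H^{s-1}_x}.
\]
To control the nonlinear term, I will use the Taylor expansion \eqref{hyp_f_nonl} to write
\[
f(u+\Psi)-f(\Psi)=u\sum_{k\geq 0}a_{k+1}\sum_{j=0}^{k}\binom{k+1}{j+1}u^{j}\Psi^{k-j},
\]
and estimate each product $\|u^{j+1}\Psi^{k-j}\|_{H^s}$ using the algebra property of $H^s(\R)$ for $s>1/2$ together with the Kato-Ponce-type inequality $\|fg\|_{H^s}\lesssim \|f\|_{H^s}\|g\|_{L^\infty}+\|f\|_{L^\infty}\|g\|_{W^{s,\infty}}$ applied to factor out powers of $\Psi\in W^{s^+,\infty}$. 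The resulting series converges to some smooth function $\mathcal{F}_1(\|u\|_{H^s},\|\Psi\|_{W^{s^+,\infty}})$ thanks to the hypothesis $\limsup |a_k|^{1/k}=0$; bounding $\|u\|_{H^s}\le \|u\|_{H^s_\omega}$ yields \eqref{umt}.

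For the second inequality, the difference $w=u-v$ satisfies
\[
\partial_t w+\partial_x^3 w+\partial_x\big(f(u+\Psi)-f(v+\Psi)\big)=0,\qquad w(0)=u_0-v_0,
\]
so that the $G$-term cancels. Writing $f(u+\Psi)-f(v+\Psi)=w\int_0^1 f'(v+\Psi+\theta w)\,d\theta$, the same Duhamel argument reduces the $X^{s-2,1}_T$-bound to estimating this product in $H^{s-1}$. Expanding $f'$ as a Taylor series and separating the $\Psi$-only piece $f'(\Psi)$ from the remainder (which is a polynomial times a combination of $u$'s and $v$'s), each product $\|w\cdot u^a v^b \Psi^c\|_{H^{s-1}}$ is controlled via Lemma \ref{sob_algebra} with parameters $(a,b,c)=(s,s-1,s-1)$ (which satisfy the required hypotheses exactly when $s>1/2$), after using the algebra property of $H^s$ on the products $u^a v^b$ and a Kato-Ponce-type estimate to absorb the $\Psi^c$ factor in $W^{s,\infty}$. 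Summing the series (again convergent by \eqref{hyp_f_nonl}) produces the smooth function $\mathcal{F}_2$.

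The main obstacle will be the product estimate for $wg$ in $H^{s-1}$ in the regime $s\in(1/2,3/2]$, where $H^{s-1}$ is \emph{not} an algebra: one must carefully exploit that $u$ and $v$ are individually in $H^s$ (and hence in $L^\infty$) even though only $\|w\|_{L^\infty_T H^{s-1}}$ appears on the right-hand side, so that in each multiplicative term exactly one factor can be placed in $H^{s-1}$ while the remaining factors sit in $H^s\cap L^\infty$ or in $W^{s,\infty}$ (for the $\Psi$ parts). Carrying this out uniformly in $k$ along the Taylor expansion is the delicate point, but the combinatorial factors are controlled by the convergence of $\sum |a_k|z^k$ for all $z\in\R$.
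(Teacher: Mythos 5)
Your proposal is correct and follows essentially the same route as the paper's proof: Duhamel formula plus the standard linear $X^{s-1,1}$ estimates to reduce the Bourgain part to an $L^2_TH^{s-1}_x$ source bound (gaining $T^{1/2}$), followed by a Taylor expansion of $f$ combined with the Sobolev product/algebra estimates to control the nonlinear term; the paper invokes precisely \eqref{sobolev_embedding} and Lemma~\ref{sob_algebra} at the same point where you apply Lemma~\ref{sob_algebra} with $(a,b,c)=(s,s-1,s-1)$. The only cosmetic differences are that the paper uses the extension operator $\rho_T$ in place of your time cutoff $\eta_T$, and it expands $f(u+\Psi)-f(v+\Psi)$ by direct binomial Taylor series rather than your $w\int_0^1 f'(\cdot)\,d\theta$ factoring, but these are equivalent implementations of the same argument.
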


\begin{proof}
First of all, we have to extend the functions $u(t)$ and $v(t)$ from $(0,T)$ to the whole line $\R$. Hence, we benefit from the extension operator $\rho_T$ defined in \eqref{rho_def}, which we use to take extensions $\tilde{u}:=\rho_T[u]$, $\tilde{v}:=\rho_T[v]$ defined on $\R^2$, both supported in $(-2,2)$. For the sake of notation, we drop the tilde in the sequel. 

\medskip

Once we have extended (in time) both solutions, comparing inequality \eqref{umt} with the definition of the $\mathcal{M}^s_T$ norm in \eqref{mst_norm_def}, it is clear that it is enough to estimate the $X^{s-1,1}$-norm. In fact, let us start out by writing the solution in its Duhamel form \[
u(t)=U(t)u_0+c\int_0^tU(t-t')\big(\partial_t\Psi+\partial_x^3\Psi+\partial_xf(u+\Psi)\big)dt'.
\]
Then by using standard linear estimates in Bourgain spaces we obtain
\begin{align}
\Vert u\Vert_{X^{s-1,1}_T}&\lesssim \Vert u\Vert_{L^\infty_TH^{s-1}_x}+\Vert \partial_t\Psi+\partial_x^3\Psi+\partial_xf(\Psi)\Vert_{X^{s-1,0}_T}\nonumber
\\ & \qquad +\big\Vert \partial_x(f(u+\Psi)-f(\Psi))\big\Vert_{X^{s-1,0}_T}\nonumber
\\ & \lesssim \Vert u\Vert_{L^\infty_TH^{s-1}_x}+T^{1/2}\Vert \partial_t\Psi+\partial_x^3\Psi+\partial_xf(\Psi)\Vert_{L^\infty_TH^{s-1}_x}\label{xst_ineq}
\\ & \qquad  +T^{1/2}\Vert f(u+\Psi)-f(\Psi)\Vert_{L^\infty_TH^s_x}.\nonumber
\end{align}
Now, from classical Sobolev estimates for products it is not difficult to see that, there exists a constant $c>0$ such that, for $k,m\in\N$, with $k\geq m$, \begin{align}\label{sobolev_embedding}
\Vert g^{k-m}h^m\Vert_{H^s}\leq c^k\Vert g\Vert_{H^s}^{k-m}\Vert h\Vert_{W^{s^+,\infty}}^m, \quad s>1/2.
\end{align}
%Moreover, from standard calculus we know that, for all $k\in\N$ and all $0\leq m\leq k$ we have
%\[
%\binom{k}{m}\leq \binom{k}{\lceil k/2\rceil}\leq 2^kk^{-1/2}.
%\]
Then, to control the contribution of the latter term in \eqref{xst_ineq}, it is enough to use Taylor expansion, from where we get\begin{align*}
\Vert f(u+\Psi)-f(\Psi)\Vert_{L^\infty_TH^s_x}&\lesssim \sum_{k=1}^{\infty}\sum_{m=0}^{k-1}c^k\vert a_k\vert \binom{k}{m}\Vert u\Vert_{L^\infty_TH^s_x}^{k-m}\Vert \Psi\Vert_{L^\infty_TW^{s^+,\infty}_x}^m
\\ & \lesssim \Vert u\Vert_{L^\infty_TH^s_x}\sum_{k=1}^\infty kc^k\vert a_k\vert \big(\Vert u\Vert_{L^\infty_TH^s_x}+\Vert\Psi\Vert_{L^\infty_TW^{s^+,\infty}_x}\big)^{k-1}.
\end{align*}
This concludes inequality \eqref{umt} thanks to the hypothesis on the coefficients $a_k$ in \eqref{hyp_f_nonl}. Now we turn to show \eqref{umt_2}. In order to do so, let us define $w:=u-v$. Notice that $w(t,x)$ solves the following equation
\begin{align*}
\partial_tw+\partial_x\big(\partial_x^2w+f(u+\Psi)-f(v+\Psi)\big)=0.
\end{align*}
Thus, writing $w(t,x)$ in its Duhamel form, and then using standard linear estimates for Bourgain spaces, we get
\begin{align*}
\Vert w\Vert_{X^{s-2,1}_T}&\lesssim \Vert w\Vert_{L^\infty_TH^{s-2}_x}+T^{1/2}\Vert f(u+\Psi)-f(v+\Psi)\Vert_{L^\infty_TH^{s-1}_x}.
\end{align*}
Then, by using Taylor expansion,  proceeding in a similar fashion as above, it is not difficult to see that \begin{align*}
&\Vert f(u+\Psi)-f(v+\Psi)\Vert_{L^\infty_TH^{s-1}_x}
\\ & \qquad \lesssim \Vert w\Vert_{L^\infty_TH^{s-1}_x}\sum_{k=1}^\infty kc^k\vert a_k\vert\big(\Vert u\Vert_{L^\infty_TH^s_x}+\Vert v\Vert_{L^\infty_TH^s_x}+\Vert\Psi\Vert_{L^\infty_TW^{s^+,\infty}_x}\big)^{k-1},
\end{align*}
where we have used again \eqref{sobolev_embedding}, as well as Lemma \ref{sob_algebra}. The proof is complete.
\end{proof}

\smallskip

\subsection{Preliminary lemmas} For $T>0$ fixed, we consider $\mathds{1}_T(t)$, the characteristic function on $[0,T]$. Then, for $\eta$ given in \eqref{def_eta}, and $R>0$, we decompose $\mathds{1}_T(t) $ as \begin{align}\label{decomposition}
\mathds{1}_T(t)=\mathds{1}_{T,R}^{\mathrm{low}}(t)+\mathds{1}_{T,R}^{\mathrm{high}}(t), \quad \hbox{ where }\quad  \mathcal{F}_t\big(\mathds{1}_{T,R}^{\mathrm{low}}\big)(\tau)=\eta\left(\dfrac{\tau}{R}\right) \mathcal{F}_t(\mathds{1}_T)(\tau).
\end{align}
The following lemma gives us some basic estimates that shall be particularly convenient to take advantage of the above decomposition along with Bourgain spaces.
\begin{lem}[See \cite{MoVe}]\label{lem_high_low_1}
For any $R>0$, $T>0$ and $q\geq1$ the following bounds hold: \begin{align}\label{indicatrix_inequality}
\Vert \mathds{1}_{T,R}^{\mathrm{high}}\Vert_{L^q}\lesssim \min\{T,R^{-1}\}^{1/q} \quad \hbox{ and }\quad \Vert \mathds{1}_{T,R}^{\mathrm{low}}\Vert_{L ^\infty}\lesssim 1.
\end{align}
Moreover, if $L\gg R$, then for all $u\in L^2(\R^2)$, the following inequality holds: \[
\big\Vert Q_L\big(\mathds{1}_{T,R}^{\mathrm{high}}u\big)\big\Vert_{L^2}\lesssim \Vert Q_{\sim L}u\Vert_{L^2}.
\]
Furthermore, for any $s\in\R$ and any $p\in[1,\infty]$, the operator $Q_{\leq L}$ is bounded in $L^p_tH^s_x$ uniformly in $L$.
\end{lem}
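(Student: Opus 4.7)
My plan is to handle the three claims in order, leveraging the fact that the projector $Q_L$ corresponds to space-time Fourier localization in the modulation variable $\tau-\xi^3$, while the cutoffs $\mathds{1}_{T,R}^{\mathrm{low}/\mathrm{high}}$ are pure time-frequency objects.

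For the bounds on $\mathds{1}_{T,R}^{\mathrm{low}}$ and $\mathds{1}_{T,R}^{\mathrm{high}}$ in \eqref{indicatrix_inequality}, I would rewrite $\mathds{1}_{T,R}^{\mathrm{low}}=K_R\ast\mathds{1}_T$, where $K_R(t)=R\check{\eta}(Rt)$. Since $\|K_R\|_{L^1}=\|\check{\eta}\|_{L^1}\lesssim 1$ uniformly in $R$, Young's inequality immediately yields $\|\mathds{1}_{T,R}^{\mathrm{low}}\|_{L^\infty}\lesssim 1$ and $\|\mathds{1}_{T,R}^{\mathrm{low}}\|_{L^q}\lesssim T^{1/q}$. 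Subtracting, $\|\mathds{1}_{T,R}^{\mathrm{high}}\|_{L^q}\lesssim T^{1/q}$. For the $R^{-1/q}$ alternative, I use that (since $\eta(0)=1$) we may write
\[
\mathds{1}_{T,R}^{\mathrm{high}}(t)=\int K_R(t-s)\bigl(\mathds{1}_T(t)-\mathds{1}_T(s)\bigr)\,ds,
\]
and the Schwartz decay of $\check{\eta}$ shows pointwise $|\mathds{1}_{T,R}^{\mathrm{high}}(t)|\lesssim_N\bigl(1+R\,\mathrm{dist}(t,\{0,T\})\bigr)^{-N}$, which integrates to $R^{-1/q}$. Taking the minimum of the two bounds delivers \eqref{indicatrix_inequality}.

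For the modulation bound when $L\gg R$, the plan is to decompose $u=\sum_{L'}Q_{L'}u$ and the cutoff dyadically in time-frequency as $\mathds{1}_{T,R}^{\mathrm{high}}=\sum_{M\gtrsim R}R_M\mathds{1}_{T,R}^{\mathrm{high}}$. Because the space-time Fourier transform of the product $R_M\mathds{1}_{T,R}^{\mathrm{high}}\cdot Q_{L'}u$ is supported where $|\tau-\xi^3|$ lies in a band around $L'$ of width $\sim M$, the constraint $|\tau-\xi^3|\sim L$ imposed by the outer $Q_L$ forces either (i) $L'\sim L$ (with $M$ free but $M\gtrsim R$), or (ii) $L'\not\sim L$ and $M\sim\max(L',L)$. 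In case (i) I estimate $\|\mathds{1}_{T,R}^{\mathrm{high}}\,Q_{\sim L}u\|_{L^2}\leq \|\mathds{1}_{T,R}^{\mathrm{high}}\|_{L^\infty}\,\|Q_{\sim L}u\|_{L^2}\lesssim \|Q_{\sim L}u\|_{L^2}$ by \eqref{indicatrix_inequality}. In case (ii), the summation over $L'$ is controlled by Cauchy–Schwarz together with almost-orthogonality of the $Q_{L'}u$ in $L^2_{t,x}$ and the uniform-in-$M$ bound $\|R_M\mathds{1}_{T,R}^{\mathrm{high}}\|_{L^\infty}\lesssim 1$ (the Littlewood–Paley kernel has $L^1$-norm independent of $M$). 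Crucially the condition $L\gg R$ ensures that in case (i) the high-frequency cutoff does not spread mass to modulations $\ll L$, which is what keeps the RHS to $\|Q_{\sim L}u\|_{L^2}$ rather than a full $\|u\|_{L^2}$. Matching the cases and summing the geometric series in $\max(M,L')/L$ concludes the bound. This is the step I expect to be the main obstacle, since one must track orthogonality between the two independent frequency decompositions carefully.

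Finally, for the uniform $L^p_tH^s_x$-boundedness of $Q_{\leq L}$, the plan is to conjugate with the Airy group: setting $v:=U(-t)u$, one checks directly from the definition \eqref{def_pr_P_Q} that $\mathcal{F}_{t,x}(U(-t)Q_Lu)(\tau,\xi)=\phi_L(\tau)\,\hat{v}(\tau,\xi)$, so $U(-t)Q_{\leq L}u=R_{\leq L}v$. Since $U(-t)$ is an isometry on $L^2_x$ and commutes with $J_x^s$, we have $\|Q_{\leq L}u\|_{L^p_tH^s_x}=\|R_{\leq L}v\|_{L^p_tH^s_x}$, and the reduction to the purely temporal multiplier $R_{\leq L}$ yields uniform boundedness in $L^p_t$ via Young's inequality applied to its $L^1$-normalized kernel, uniformly in $L\in\mathbb{D}_{\mathrm{nh}}$.
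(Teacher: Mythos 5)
The paper itself proves only the first estimate in \eqref{indicatrix_inequality}, citing \cite{MoVe} for the remaining three claims; your proposal attempts to supply proofs for all of them.

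For $\Vert \mathds{1}_{T,R}^{\mathrm{high}}\Vert_{L^q}\lesssim\min\{T,R^{-1}\}^{1/q}$, your starting point is the same as the paper's: both write $\mathds{1}_{T,R}^{\mathrm{high}}(t)=\int \check\eta(s)\big(\mathds{1}_T(t)-\mathds{1}_T(t-\tfrac{s}{R})\big)\,ds$, using $\int\check\eta=\eta(0)=1$. The paper then applies Minkowski's integral inequality to pull the $L^q_t$ norm inside the $s$-integral and reduces to the elementary bound $\|\mathds{1}_T-\mathds{1}_T(\cdot-h)\|_{L^q}\lesssim\min\{T,|h|\}^{1/q}$, which handles both regimes $T\lessgtr R^{-1}$ in one line. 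You instead split the two regimes: $T^{1/q}$ from Young's inequality on $\mathds{1}_{T,R}^{\mathrm{low}}=K_R\ast\mathds{1}_T$, and $R^{-1/q}$ from the pointwise bound $|\mathds{1}_{T,R}^{\mathrm{high}}(t)|\lesssim_N(1+R\,\mathrm{dist}(t,\{0,T\}))^{-N}$ (which follows from the Schwartz decay of $\check\eta$ once one notices that the integrand vanishes unless $|t-s|\ge\mathrm{dist}(t,\{0,T\})$). Both arguments are correct; the paper's is a touch more streamlined, yours makes the boundary-layer localization of $\mathds{1}_{T,R}^{\mathrm{high}}$ explicit, which is a nice thing to see stated. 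Your conjugation argument for the uniform $L^p_tH^s_x$-boundedness of $Q_{\leq L}$ (reduce to $R_{\leq L}$ via $U(-t)Q_{\leq L}u=R_{\leq L}(U(-t)u)$ and then use Minkowski--Young with the $L^1$-normalized kernel of $R_{\leq L}$) is also correct.

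The $Q_L$ bound is where your sketch has a genuine gap. Your support analysis correctly isolates the cases $L'\sim L$ and $M\gtrsim\max(L,L')$, but in case~(ii) you claim the sum over $L'$ and $M$ is controlled by a ``geometric series in $\max(M,L')/L$'' using Cauchy--Schwarz and the bound $\|R_M\mathds{1}_{T,R}^{\mathrm{high}}\|_{L^\infty}\lesssim 1$. That $L^\infty$ bound has no decay in $M$, so by itself it gives only $\|Q_L(R_M\mathds{1}_{T,R}^{\mathrm{high}}Q_{L'}u)\|_{L^2}\lesssim\|Q_{L'}u\|_{L^2}$, and the dyadic sum over $L'$ does not close. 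The decay you need comes instead from the $|\tau|^{-1}$ decay of $\widehat{\mathds{1}_T}$ --- equivalently $\|R_M\mathds{1}_{T,R}^{\mathrm{high}}\|_{L^2_t}\lesssim M^{-1/2}$ --- combined with a measure count of the $\tau$- and $\tau'$-supports on the Fourier side (intervals of length $\sim L$ and $\sim L'$, respectively): Cauchy--Schwarz in $\tau'$ then yields the factor $(L'/L)^{1/2}$ for $L'\ll L$ and $(L/L')^{1/2}$ for $L'\gg L$ that make the dyadic sum converge. Your write-up does not identify this mechanism, so the key step is unsupported as stated. I would also flag that even with this repair the argument naturally produces contributions from $\|Q_{L'}u\|_{L^2}$ with $L'\not\sim L$ (damped by $(\min/\max)^{1/2}$ factors), so one should compare carefully with the precise formulation in \cite{MoVe} before asserting the clean right-hand side $\|Q_{\sim L}u\|_{L^2}$.
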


\begin{proof}
We point out that the only part which is not strictly contained in \cite{MoVe} is given by the first inequality in \eqref{indicatrix_inequality}. However, this proof follows very similar lines, except for one straightforward step, and hence we shall be brief. In fact, a direct computation yields \begin{align*}
\Vert \mathds{1}_{T,R}^{\mathrm{high}}\Vert_{L^q}&=\left(\int_\R\left\vert \int_\R\left(\mathds{1}_T(t)-\mathds{1}_T\big(t-\tfrac{s}{R}\big)\right)\mathcal{F}^{-1}\eta(s)ds\right\vert^q dt\right)^{1/q}
\\ & \leq \int_\R\left(\int_{\R}\left\vert\mathds{1}_T(t)-\mathds{1}_T\big(t-\tfrac{s}{R}\big)\right\vert^q\big\vert\mathcal{F}^{-1}\eta(s)\big\vert^qdt\right)^{1/q} ds
\\ & \lesssim \int_\R \min\big\{T,\tfrac{\vert s\vert}{R}\big\}^{1/q} \big\vert\mathcal{F}^{-1}\eta(s)\big\vert ds\lesssim \min\big\{T,R^{-1}\big\}^{1/q},
\end{align*}
where to obtain the first inequality we have used Minkowski integral inequality. The proof is complete.
\end{proof}

\begin{lem}[\cite{MoVe}]\label{resonant_bourgain}
Let $k\geq 3$ be a fixed parameter. Consider $L_1,...,L_k\geq 1$ and $N_1,...,N_k>0$ to be a list of $2k$ dyadic numbers. Consider $\{u_i\}_{i=1}^k\subset\mathcal{S}'(\R^2)$ all of them being real-valued, and let $\chi\in L^\infty(\R^2;\mathbb{C})$. Additionally, assume that\footnote{If $k=3$ we omit the last inequality.} $N_1\geq N_2\geq N_3\geq 2^9k\max\{N_4,...,N_k\}$. Then, the following identity holds 
\begin{align*}
\int_{\R^2}\Pi_\chi\big(Q_{L_1}P_{N_1}u_1,Q_{L_2}P_{N_2}u\big)\prod_{i=3}^k Q_{L_i}P_{N_i}u=0,
\end{align*}
unless $L_{\max}\geq (2^{9}k)^{-1}N_1N_2N_3$, where $L_{\max}:=\max\{L_1,...,L_k\}$. 
\end{lem}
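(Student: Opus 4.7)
The proof follows the standard resonance-function strategy for Bourgain spaces. First I would apply Plancherel in both time and space to express the integral over $\R^2$ as an integral on the hyperplane $\{(\tau_i,\xi_i)_{i=1}^k\in\R^{2k}:\sum_i\tau_i=0,\ \sum_i\xi_i=0\}$ of the product of the space-time Fourier transforms $\mathcal{F}_{t,x}[Q_{L_i}P_{N_i}u_i]$, modulated by a bounded factor built from $\chi$. On the support of this integrand, $|\xi_i|\in[N_i/2,2N_i]$ and $|\tau_i-\xi_i^3|\lesssim L_i$, while $\sum_i\tau_i=0$. The triangle inequality then yields the basic upper bound
\[
\Bigl|\sum_{i=1}^k\xi_i^3\Bigr| = \Bigl|\sum_{i=1}^k(\xi_i^3-\tau_i)\Bigr| \leq \sum_{i=1}^k L_i \leq kL_{\max},
\]
so the lemma reduces to proving the matching algebraic lower bound $\bigl|\sum_i\xi_i^3\bigr|\gtrsim N_1N_2N_3$ on the support.

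To produce this lower bound, I would set $\sigma:=\xi_1+\xi_2+\xi_3=-\sum_{i\geq 4}\xi_i$ and $N_*:=\max\{N_4,\dots,N_k\}$; the hypothesis $N_3\geq 2^9kN_*$ yields $|\sigma|\leq 2(k-3)N_*\leq N_3/2^8$. The classical cubic identity
\[
\xi_1^3+\xi_2^3+\xi_3^3-3\xi_1\xi_2\xi_3 = \sigma\bigl(\xi_1^2+\xi_2^2+\xi_3^2-\xi_1\xi_2-\xi_2\xi_3-\xi_1\xi_3\bigr)
\]
gives $|\xi_1^3+\xi_2^3+\xi_3^3|\geq 3|\xi_1\xi_2\xi_3|-3|\sigma|N_1^2$. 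Next I would observe that, since $\xi_1=\sigma-\xi_2-\xi_3$, one has $|\xi_1|\leq|\sigma|+|\xi_2|+|\xi_3|\leq 4N_2+N_3/2^8$, so that $N_1\sim N_2$ on the support; this turns the error term $3|\sigma|N_1^2$ into a quantity dominated by $N_1N_2N_3/2^{6}$, which is negligible compared with the main contribution $3|\xi_1\xi_2\xi_3|\geq 3N_1N_2N_3/8$. The tail $|\sum_{i\geq 4}\xi_i^3|\leq(k-3)N_*^3\leq N_1N_2N_3/2^{27}$ is even smaller. Combining these estimates produces $\bigl|\sum_i\xi_i^3\bigr|\geq N_1N_2N_3/4$, and feeding this into the upper bound above yields $L_{\max}\geq N_1N_2N_3/(4k)\geq (2^9k)^{-1}N_1N_2N_3$. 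If this inequality fails, the support of the product is disjoint from the constraint hyperplane, and the integral vanishes identically.

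The only subtle point in this argument is the constant-chasing: the threshold $2^9k$ in the hypothesis $N_3\geq 2^9kN_*$ is calibrated precisely so that both the error $|\sigma|N_1^2$ coming from the deviation from exact three-wave resonance and the direct contribution $\sum_{i\geq 4}\xi_i^3$ from the small frequencies remain much smaller than the main term $3\xi_1\xi_2\xi_3$, while simultaneously forcing $N_1\sim N_2$. Beyond this bookkeeping, the argument reduces to the elementary KdV three-wave identity $\xi_1^3+\xi_2^3+\xi_3^3=3\xi_1\xi_2\xi_3$ under $\xi_1+\xi_2+\xi_3=0$.
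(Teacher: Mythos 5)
The paper cites Lemma \ref{resonant_bourgain} verbatim from \cite{MoVe} and does not give a proof, so there is no in-paper argument to compare against; the closest analogue is the resonance estimate \eqref{resonant_o_n1n3} inside the proof of Proposition \ref{prop_energy_est_impro}, which reaches the same conclusion by substituting $\xi_1=-(\xi_2+\cdots+\xi_{k+1})$ into $\xi_1^3$ and tracking an $O(N_1^2N_4)$ error, rather than introducing $\sigma=\xi_1+\xi_2+\xi_3$ and factoring through $\xi_1^3+\xi_2^3+\xi_3^3-3\xi_1\xi_2\xi_3=\sigma\,Q$ as you do. The two are algebraically equivalent, and your overall strategy --- Plancherel on the hyperplane $\{\sum_i\tau_i=0,\ \sum_i\xi_i=0\}$, the upper bound $\bigl|\sum_i\xi_i^3\bigr|\lesssim kL_{\max}$ from the modulation supports, and the three-wave identity to produce a lower bound of size $N_1N_2N_3$ --- is exactly the standard argument and is correct in outline.

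The one slip is in the constant bookkeeping for the lower bound. On $\supp P_{N_i}$ you only have $|\xi_i|\leq 2N_i$, so the symmetric quantity $Q=\tfrac12\sum_{i<j}(\xi_i-\xi_j)^2$ is bounded by roughly $24N_1^2$, not $3N_1^2$, and the intermediate claims ``$3|\sigma|N_1^2\leq N_1N_2N_3/2^6$'' and ``$\bigl|\sum_i\xi_i^3\bigr|\geq N_1N_2N_3/4$'' do not hold as stated: since $N_1$ can be as large as $\sim 8N_2$, the crude estimate $|\sigma Q|\lesssim |\sigma|N_1^2$ can actually exceed the main term $3|\xi_1\xi_2\xi_3|\geq\tfrac38N_1N_2N_3$. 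The repair is to use $|\xi_2|,|\xi_3|\leq 2N_2$ and $\xi_1=\sigma-\xi_2-\xi_3$ already inside $Q$, which gives $Q\leq CN_2^2$ with $C$ of order $50$, hence $|\sigma Q|\leq \tfrac{C}{2^8}N_2^2N_3\leq\tfrac{C}{2^8}N_1N_2N_3$, safely below $\tfrac38N_1N_2N_3$. The resulting lower bound is closer to $N_1N_2N_3/8$ than $N_1N_2N_3/4$, but that is still far above the $2^{-8}N_1N_2N_3$ needed after dividing the upper bound $\bigl|\sum_i\xi_i^3\bigr|\leq 2kL_{\max}$ by $2k$, so the stated threshold $L_{\max}\geq(2^9k)^{-1}N_1N_2N_3$ follows with room to spare. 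In short, your approach proves the lemma; only the numerical figures along the way need tightening.
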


Finally, we prove some basic lemmas concerning the application of H\"older inequality with some particular instances of pseudo-product operators that shall appear in the next section.

\begin{lem}\label{pseudo_holder_obvio_l2}
Let $k\geq 2$ be a fixed natural number. Consider $k$ functions $u_1,...,u_k\in L^2(\R)$, such that each of them is supported in the annulus $\{\vert\xi_i\vert\sim N_i\}$. Additionally consider a continuous function $a\in C(\R^k)$. Then, the following inequality holds
\[
\left\vert \int_{\Gamma^k} a(\xi_1,...,\xi_k)u_1(\xi_1)...u_{k}(\xi_k)d\Gamma^k\right\vert\lesssim (N_3...N_k)^{1/2}\Vert a\Vert_{L^\infty(\Omega_k)}\prod_{i=1}^{k}\Vert u_i\Vert_{L^2},
\]
where $\Omega_{k}$ stands for the set \[
\Omega_{k}:=\big\{(\xi_1,...\xi_k)\in \Gamma^k:\, \forall i=1,...,k, \, \xi_i\in\supp u_i\big\}.
\]
\end{lem}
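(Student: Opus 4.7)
The plan is to reduce the estimate to the case without the symbol $a$, and then to invoke a standard Plancherel, Hölder, and Bernstein argument carried out in physical space. Since the integrand vanishes outside $\Omega_k$ by the support assumption on the $u_i$'s, the first step is to pull out the symbol and pass to absolute values,
\[
\left| \int_{\Gamma^k} a(\xi_1,\dots,\xi_k)\,u_1(\xi_1)\cdots u_k(\xi_k)\, d\Gamma^k \right| \leq \|a\|_{L^\infty(\Omega_k)} \int_{\Gamma^k} v_1(\xi_1)\cdots v_k(\xi_k)\, d\Gamma^k,
\]
where $v_i := |u_i|$ is a non-negative $L^1\cap L^2$ function supported on the same annulus $\{\vert\xi\vert\sim N_i\}$ and satisfying $\|v_i\|_{L^2}=\|u_i\|_{L^2}$. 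It therefore suffices to bound the right-hand integral.

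The second step is to move the integral over $\Gamma^k$ to physical space via Parseval. Setting $V_i(x):=\int e^{ix\xi}v_i(\xi)\,d\xi$, an application of Fubini, which is justified by the compact-support hypothesis (placing each $V_i$ in $L^2\cap L^\infty$ and making all integrations absolutely convergent), yields the identity
\[
\int_{\Gamma^k} v_1(\xi_1)\cdots v_k(\xi_k)\, d\Gamma^k \;=\; \frac{1}{2\pi}\int_{\R} V_1(x)\cdots V_k(x)\, dx.
\]
One then applies Hölder's inequality on $\R$ with exponents $(2,2,\infty,\dots,\infty)$, keeping $V_1$ and $V_2$ in $L^2$ so as not to introduce spurious factors of $N_1$ or $N_2$:
\[
\left|\int_\R V_1\, V_2 \prod_{i=3}^{k} V_i\, dx \right| \;\leq\; \|V_1\|_{L^2}\|V_2\|_{L^2}\prod_{i=3}^{k}\|V_i\|_{L^\infty}.
\]

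Finally, Plancherel gives $\|V_i\|_{L^2}=\|v_i\|_{L^2}=\|u_i\|_{L^2}$ for $i=1,2$, while Cauchy-Schwarz combined with the support hypothesis produces a Bernstein-type bound for the $L^\infty$ factors,
\[
\|V_i\|_{L^\infty} \;\leq\; \|v_i\|_{L^1} \;\leq\; \vert\supp v_i\vert^{1/2}\|v_i\|_{L^2} \;\lesssim\; N_i^{1/2}\|u_i\|_{L^2}, \qquad i\geq 3.
\]
Multiplying these contributions produces precisely the factor $(N_3\cdots N_k)^{1/2}$ and the claim follows. The argument is entirely standard; no genuine obstacle is expected, and the only minor point requiring care is the justification of the Parseval identity above, which is immediate from the compact-support hypotheses on the $v_i$'s.
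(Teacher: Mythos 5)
Your proof is correct, and it is essentially the Fourier-dual presentation of the paper's argument: the paper bounds the integral directly in frequency variables, applying Cauchy--Schwarz in the $\xi_2$-variable (with $\xi_1=-\xi_2-\cdots-\xi_k$) after taking a supremum over $\xi_3,\dots,\xi_k$, and then using $\|u_i\|_{L^1}\lesssim N_i^{1/2}\|u_i\|_{L^2}$ on the annuli, whereas you pass to physical space via Parseval and apply H\"older with exponents $(2,2,\infty,\dots,\infty)$ together with the same Bernstein-type bound. The two routes are equivalent step-for-step (your Parseval identity plus H\"older is precisely the paper's $\sup$-plus-Cauchy--Schwarz), and both yield the factor $(N_3\cdots N_k)^{1/2}$ from the same source, so there is no substantive difference.
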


\begin{proof}
Let us start by assuming that $k\geq 3$, since the case $k=2$ is direct. In fact, first of all we get rid of $a(\xi_1,...,\xi_k)$ simply by bounding as follows 
\begin{align*}
\left\vert \int_{\Gamma^k} a(\xi_1,...,\xi_k)u_1(\xi_1)...u_{k}(\xi_k)d\Gamma^k\right\vert\leq \Vert a\Vert_{L^\infty(\Omega_k)}\int_{\Gamma^k} \vert u_1(\xi_1)...u_k(\xi_k)\vert d\Gamma^k.
\end{align*}
Then, it is enough notice that we can bound the latter integral in the above inequality by \begin{align*}
\int_{\Gamma^k} \vert u_1(\xi_1)...u_k(\xi_k)\vert d\Gamma^k\leq \sup_{\xi_3,...,\xi_k}\int_\R \vert u_1(-\xi_2-...-\xi_k)u_2(\xi_2)\vert d\xi_2\times \prod_{i=3}^k\int_\R \vert u_i(\xi)\vert d\xi
\end{align*} 
Finally, noticing that, by Cauchy-Schwarz inequality and the supports hypotheses, we have 
\begin{align*}
\int_\R \vert u_i(\xi)\vert d\xi\lesssim N_i^{1/2}\Vert u_i\Vert_{L^2} \quad \,\hbox{and}\,\quad \left\vert \int_\R u_1(-\xi_2-....-\xi_k)u_2(\xi_2)d\xi_2\right\vert\lesssim\Vert u_1\Vert_{L^2}\Vert u_2\Vert_{L^2}.
\end{align*}
The proof is complete.
\end{proof}

\begin{lem}\label{pseudo_holder_l2}
Let $k\geq 2$ and $m\in[2, k+1]$ two fixed natural numbers. Additionally, consider a family of  dyadic numbers $\{N_i\}_{i=1}^{k+2}$, all of them  being fixed. Let $u_1,...,u_{k+1}\in L^2(\R)$ such that each of them has Fourier transform supported on the ball $\{\vert\xi_i\vert\leq N_i\}$, respectively. Furthermore, assume that $M=\max\{N_3,...,N_{k+1}\}$. Then, the following holds
\[
\left\vert \int_{\Gamma^{k+1}} a(\xi_1,...,\xi_k)\hat{u}_1(\xi_1)...\hat{u}_{k+1}(\xi_{k+1})d\Gamma^{k+1}\right\vert\lesssim M\Vert u_1\Vert_{L^2}\Vert u_2\Vert_{L^2}\prod_{i=3}^{k+1}\Vert u_i\Vert_{L^\infty},
\]
where the implicit constant depends polynomially on $k$ and $a(\xi_1,...,\xi_k)$ stands for the following quantity \begin{align}\label{definition_symbol_a}
a(\xi_1,...,\xi_k):=\sum_{i=1}^{m}\phi_{N_{k+2}}^2(\xi_i)\xi_i,\quad \hbox{where}\quad \xi_{k+1}=-\xi_1-...-\xi_k.
\end{align}
\end{lem}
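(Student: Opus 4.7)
The plan is to recast the Fourier integral in physical space as a sum of multilinear expressions, then split into two regimes depending on the size of $N_{k+2}$ relative to $M$. By Plancherel and the definition of the Littlewood-Paley projector $P_{N_{k+2}}$ with symbol $\phi_{N_{k+2}}$, the quantity to estimate equals (up to a harmless constant)
\[
\sum_{i=1}^m \int_{\mathbb R} (P_{N_{k+2}}^2 \partial_x u_i)(x) \prod_{j\neq i} u_j(x)\,dx.
\]

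In the easy regime $N_{k+2}\lesssim kM$ the estimate is immediate by direct H\"older. For each term I place $u_1,u_2$ in $L^2$ and the remaining factors in $L^\infty$; for $i\ge 3$ the Bernstein-type bound gives $\|P_{N_{k+2}}^2 \partial_x u_i\|_{L^\infty}\lesssim \min(N_{k+2},N_i)\|u_i\|_{L^\infty}\leq M\|u_i\|_{L^\infty}$, and for $i\in\{1,2\}$ the $L^2$-analogue reads $\|P_{N_{k+2}}^2 \partial_x u_i\|_{L^2}\leq N_{k+2}\|u_i\|_{L^2}\lesssim kM\|u_i\|_{L^2}$. Summing over the at most $k+1$ indices yields the claim with a polynomial-in-$k$ constant.

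The hard regime is $N_{k+2}\gg kM$. In this case the support constraints $|\xi_i|\leq N_i\leq M\ll N_{k+2}$ for $i\ge 3$ imply $\phi_{N_{k+2}}^2(\xi_i)\equiv 0$ on the relevant support, killing all terms with $i\geq 3$; the convolution constraint $\sum\xi_i=0$ additionally forces $N_1,N_2\sim N_{k+2}$. Only $i=1,2$ contribute, and they must be combined to exploit the cancellation coming from the oddness of $\tilde a(\eta):=\phi_{N_{k+2}}^2(\eta)\eta$. I would use the algebraic identity
\[
\tilde a(\xi_1)+\tilde a(\xi_2)=(\xi_1+\xi_2)\,H(\xi_1,\xi_2),\qquad H(\xi_1,\xi_2):=\int_0^1 \tilde a'\bigl(-\xi_2+s(\xi_1+\xi_2)\bigr)\,ds,
\]
where $\|H\|_{L^\infty}\leq \|\tilde a'\|_{L^\infty}\lesssim 1$ uniformly in $N_{k+2}$. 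Substituting $\xi_1+\xi_2=-\sum_{j\ge 3}\xi_j$ and returning to physical space transforms the integral into a sum over $j\ge 3$ of expressions of the form $\int \Pi_H(u_1,u_2)(\partial_x u_j)\prod_{\ell\neq j,\,\ell\ge 3} u_\ell\,dx$, where $\Pi_H$ is the bilinear pseudo-product with symbol $H$.

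The remaining ingredient is the bilinear estimate $\|\Pi_H(u_1,u_2)\|_{L^1}\lesssim \|u_1\|_{L^2}\|u_2\|_{L^2}$ with constant independent of $N_{k+2}$. I would prove this by expanding $H$ as a 2D Fourier series on a box $[-R,R]^2$ with $R\sim N_{k+2}$: after the natural rescaling $H$ is a smooth function on a box of order-one size with bounded derivatives of all orders, so its Fourier coefficients decay rapidly and have summable absolute values uniformly in $N_{k+2}$. Each monomial $e^{i\pi(n\xi_1+m\xi_2)/R}$ factorises as the tensor product of two translations acting on $u_1$ and $u_2$, whose $L^1$ product norm is bounded by $\|u_1\|_{L^2}\|u_2\|_{L^2}$ via Cauchy-Schwarz; summing the Fourier series yields the claim. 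Combined with Bernstein $\|\partial_x u_j\|_{L^\infty}\lesssim N_j\|u_j\|_{L^\infty}\leq M\|u_j\|_{L^\infty}$ and $L^1$-$L^\infty$ H\"older, this closes the hard regime.

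The principal obstacle is the cancellation step in the hard regime: the naive $L^2$ bound on $P_{N_{k+2}}^2\partial_x u_i$ for $i\in\{1,2\}$ produces the factor $N_{k+2}$ in place of $M$, so one must combine the two contributions and establish the bilinear $L^1$ estimate on $\Pi_H$ uniformly in $N_{k+2}$ (this last estimate can alternatively be deduced from the Coifman--Meyer multiplier theorem).
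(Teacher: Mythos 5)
Your argument is correct in spirit and takes a genuinely different route from the paper's in the crucial step. Both proofs dispose of the terms $i\geq 3$ by the same direct H\"older/Bernstein argument, and both recognize that the obstacle is the $i=1,2$ contribution $\phi_{N_{k+2}}^2(\xi_1)\xi_1+\phi_{N_{k+2}}^2(\xi_2)\xi_2$. The paper does \emph{not} perform your case distinction on $N_{k+2}$ versus $M$. Instead, it algebraically splits this symbol as $\phi_{N_{k+2}}^2(\xi_1)(\xi_1+\xi_2)-\bigl(\phi_{N_{k+2}}^2(\xi_1)-\phi_{N_{k+2}}^2(\xi_2)\bigr)\xi_2$; the first piece is reduced to the already-treated terms via $\xi_1+\xi_2=-\sum_{j\ge3}\xi_j$, while the second is rewritten by Plancherel and integration by parts as a commutator $[P_{N_{k+2}}^2\partial_x,u_3]u_1$, whose kernel is bounded pointwise via the Mean Value Theorem and then estimated by Schur's test. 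Your approach instead regularizes the two singular terms simultaneously through the divided-difference factorization $\widetilde a(\xi_1)+\widetilde a(\xi_2)=(\xi_1+\xi_2)H(\xi_1,\xi_2)$ and then appeals to a bilinear $L^2\times L^2\to L^1$ multiplier estimate for $\Pi_H$. So the paper encodes the cancellation as a commutator and handles it by elementary kernel estimates, whereas you encode it as a symbol factorization and outsource the resulting bilinear bound to a Coifman--Meyer-type or Fourier-series argument. Both exploit exactly the same cancellation, and both carry polynomial-in-$k$ constants.

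There is one point you should tighten. In the hard regime you propose to expand $H$ in a 2D Fourier series on a box $[-R,R]^2$ with $R\sim N_{k+2}$, but the trigonometric polynomials you obtain agree with $H$ \emph{only on that box}, whereas the Fourier supports of $u_1,u_2$ are only known to lie in balls of radii $N_1,N_2$ that bear no a priori relation to $N_{k+2}$ (and $H$, being a divided difference of a compactly supported odd function, does not vanish off the box $|\xi_1|,|\xi_2|\lesssim N_{k+2}$: it is also nonzero near the antidiagonal for large frequencies). The fix is precisely the observation you made in passing: since $\widetilde a$ is supported on the annulus $\{|\eta|\sim N_{k+2}\}$ and, on $\Gamma^{k+1}$, $|\xi_1+\xi_2|\le kM\ll N_{k+2}$, the integrand vanishes unless $|\xi_1|\sim|\xi_2|\sim N_{k+2}$. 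One should therefore first insert free Littlewood--Paley projections $P_{\sim N_{k+2}}$ on $u_1$ and $u_2$ (these cost nothing in $L^2$ and make the exact identity with the original symbol manifest) and only then apply the Fourier-series expansion of $H$ on the now-adequate box. With this adjustment your proof is complete; absent it, the bilinear $L^1$ bound for $\Pi_H$ is not justified as stated.
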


\begin{proof}
In fact, first of all notice that, except for the terms associated with $i=1,2$ in the definition of $a(\xi_1,...,\xi_k)$, the proof follows directly from Plancherel Theorem and H\"older inequality. Indeed, going back to physical-variables and then using H\"older inequality as well as Bernstein inequalities, we obtain
\begin{align*}
&\left\vert \int_{\Gamma^{k+1}} \Big(a(\xi_1,...,\xi_k)-\phi_N^2(\xi_1)\xi_1-\phi_N^2(\xi_2)\xi_2\Big)\hat{u}_1(\xi_1)...\hat{u}_{k+1}(\xi_{k+1})d\Gamma^{k+1}\right\vert
\\ & \qquad \lesssim \Vert u_1\Vert_{L^2}\Vert u_2\Vert_{L^2}\sum_{j=3}^{m}\Vert \partial_xP_N^2u_j\Vert_{L^\infty}\prod_{i=3,i\neq j}^{m}\Vert u_i\Vert_{L^\infty}
\\ & \qquad \lesssim M \Vert u_1\Vert_{L^2}\Vert u_2\Vert_{L^2}\prod_{i=3}^{k+1}\Vert u_i\Vert_{L^\infty}.
\end{align*}
Therefore, we can restrict ourselves to study the above integral when replacing $a(\xi_1,...,\xi_k)$ by the following symbol \[
\widetilde{a}(\xi_1,...,\xi_k):=\phi_N^2(\xi_1)\xi_1+\phi_N^2(\xi_2)\xi_2.
\]
Next, we split this symbol into two parts as follows
\[
\widetilde{a}(\xi_1,...,\xi_k)=\phi_N^2(\xi_1)\big(\xi_1+\xi_2\big)-\big(\phi_N^2(\xi_1)-\phi_N^2(\xi_2)\big)\xi_2=:\widetilde{a}_1(\xi_1,...,\xi_k)+\widetilde{a}_2(\xi_1,...,\xi_k).
\]
Notice now that due to the additional restriction imposed by $\Gamma^{k+1}$, in this domain we can rewrite $\widetilde{a}_1(\xi_1,...,\xi_k)$ as \[
\widetilde{a}_1(\xi_1,...,\xi_k)=-\phi_N^2(\xi_1)(\xi_3+...+\xi_{k+1}).
\]
Hence, this case also follows from the above analysis. Therefore, it only remains to consider the case of $\widetilde{a}_2$. For the sake of clarity we shall assume now that $k=2$, the proof for the general case shall be clear from this one. In fact, by using Plancherel Theorem, integration by parts and then H\"older inequality we immediately obtain that
\begin{align*}
\left\vert\int_{\Gamma^3}\widetilde{a}_2(\xi_1,\xi_2)\hat{u}_1(\xi_1)\hat{u}_2(\xi_2)\hat{u}_3(\xi_3)d\Gamma^3 \right\vert&=\left\vert\int_\R \big(u_{2,x}P_N^2u_1-u_1P_N^2u_{2,x}\big) u_3dx\right\vert
\\ & = \left\vert\int_\R u_2 \partial_x\big(u_3P_N^2u_1-P_N^2(u_1u_3)\big)dx\right\vert
\\ & \lesssim \Vert u_1\Vert_{L^2}\Vert u_2\Vert_{L^2}\Vert \partial_x u_3\Vert_{L^\infty}+\Vert u_2\Vert_{L^2}\big\Vert [P_N^2\partial_x,u_3]u_1\big\Vert_{L^2}.
\end{align*}
Then, since $\Vert \partial_xu_3\Vert_{L^\infty}\lesssim M\Vert u_3\Vert_{L^\infty}$, it only remains to control the latter factor of the above inequality. In order to do that, first notice that by direct computations we can write \begin{align*}
[P_N^2\partial_x,u_3]u_1(x)=\int_\R K(x,y)u_1(y)dy,
\end{align*} 
where the kernel $K(x,y)$ can be written as \begin{align*}
K(x,y)&=icN^2\int_\R e^{iN(x-y)\eta}\eta\phi^2(\eta)\big(u_3(y)-u_3(x)\big) d\eta,
\end{align*}
for some constant $c\in\R$. Thus, as an application of the Mean Value Theorem it is not difficult to see that there exists a function $g\in L^1(\R)$ such that 
\[
\big\vert K(x,y)\big\vert \lesssim N\Vert \partial_xu_3\Vert_{L^\infty}g\big(N(x-y)\big).
\]
Notice that the latter inequality implies, in particular, the following uniform bound \[
\sup_{y\in\R}\int_\R\big\vert K(x,y)\big\vert dx +\sup_{x\in\R}\int_\R\big\vert K(x,y)\big\vert dy\lesssim \Vert \partial_xu_3\Vert_{L^\infty},
\]
where the implicit constant does not depends on $N$. Therefore, applying Schur lemma, and then Bernstein inequality in the resulting right-hand side, we obtain that 
\begin{align*}
\big\Vert [P_N^2\partial_x,u_3]u_1\big\Vert_{L^2}\lesssim \Vert u_1\Vert_{L^2}\Vert \partial_xu_3\Vert_{L^\infty}\lesssim M \Vert u_1\Vert_{L^2}\Vert u_3\Vert_{L^\infty},
\end{align*}
what concludes the proof of the lemma.
\end{proof}

\smallskip

\subsection{Strichartz estimates}
In this sub-section we seek to prove a refined Strichartz estimate for solutions to the linear Airy equation with a general source term. The proof we present here is just a slight modification of the arguments already shown in \cite{KoTz,MoPiVe,MoPiVe3}. Before getting into the details, let us recall the classical smoothing effect derived in \cite{KePoVe}  that shall be useful in the sequel
\begin{align}\label{strichartz_classical}
\big\Vert e^{-t\partial_x^3}D_x^{1/4}u_0\big\Vert_{L^4_tL^\infty_x}\lesssim \Vert u_0\Vert_{L^2_x}.
\end{align}
Now we are ready to state our refined Strichartz estimate.
\begin{lem}\label{lem_smoothing}
Let $T>0$ and consider $\delta\geq 0$ to be a fixed parameter. Let $u(t,x)$ to be any solution defined on $[0,T]$ to the following linear equation \begin{align}\label{airy}
\partial_tu+\partial_x^3u=F.
\end{align}
Then, there exists $\kappa_1,\kappa_2\in(\tfrac{1}{4},\tfrac{1}{2})$ such that, for any $\theta>0$, the following inequality holds
\begin{align}\label{smoothing_eff}
\big\Vert u\Vert_{L^2_TL^\infty_x}\lesssim T^{\kappa_1}\big\Vert J_x^{-\frac{1}{4}(1-\delta)+\theta}u\big\Vert_{L^\infty_TL^2_x}+T^{\kappa_2}\big\Vert J_x^{-\frac{1}{4}(1+3\delta)+\theta}F\big\Vert_{L^2_TL^2_x}.
\end{align}
\end{lem}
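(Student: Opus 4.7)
The plan is to reduce the estimate to a frequency-localized bound, derive it by a Duhamel argument on short subintervals of $[0,T]$, and then sum dyadically in $N$; this is the standard approach from \cite{KoTz,MoPiVe,MoPiVe3}. First I would perform a Littlewood--Paley decomposition in space and, for each dyadic $N\geq 1$, partition $[0,T]$ into consecutive subintervals $I_j=[t_j,t_{j+1}]$ of length $\tau = T^{a} N^{-\delta}$, where $a\in(1/3,2/3)$ is to be fixed at the end. On each $I_j$ the Duhamel formula reads
\[
P_Nu(t) = U(t-t_j)P_Nu(t_j) + \int_{t_j}^t U(t-t')P_NF(t')\,dt',
\]
so the classical smoothing estimate \eqref{strichartz_classical} (after a time translation) combined with Minkowski's inequality on the Duhamel term gives
\[
\Vert P_Nu\Vert_{L^4_{I_j}L^\infty_x}\lesssim N^{-1/4}\Vert P_Nu(t_j)\Vert_{L^2_x} + N^{-1/4}\Vert P_NF\Vert_{L^1_{I_j}L^2_x}.
\]

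Next, I would apply H\"older in time twice, using $\Vert \cdot\Vert_{L^2_{I_j}} \leq \tau^{1/4}\Vert \cdot\Vert_{L^4_{I_j}}$ on the left-hand side and $\Vert \cdot\Vert_{L^1_{I_j}L^2_x} \leq \tau^{1/2}\Vert \cdot\Vert_{L^2_{I_j}L^2_x}$ on the source, to produce the per-interval bound
\[
\Vert P_Nu\Vert_{L^2_{I_j}L^\infty_x}\lesssim \tau^{1/4}N^{-1/4}\Vert P_Nu(t_j)\Vert_{L^2_x} + \tau^{3/4}N^{-1/4}\Vert P_NF\Vert_{L^2_{I_j}L^2_x}.
\]
Squaring, summing over the $\sim T/\tau$ intervals, using $\sum_j \Vert P_Nu(t_j)\Vert_{L^2_x}^2\lesssim (T/\tau)\Vert P_Nu\Vert_{L^\infty_TL^2_x}^2$, and then substituting $\tau=T^aN^{-\delta}$, I would obtain the frequency-localized estimate
\[
\Vert P_Nu\Vert_{L^2_TL^\infty_x}\lesssim T^{(2-a)/4}N^{-(1-\delta)/4}\Vert P_Nu\Vert_{L^\infty_TL^2_x} + T^{3a/4}N^{-(1+3\delta)/4}\Vert P_NF\Vert_{L^2_TL^2_x}.
\]
Choosing any $a\in(1/3,2/3)$ places both $\kappa_1=(2-a)/4$ and $\kappa_2=3a/4$ inside $(1/4,1/2)$, as required.

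Finally, I would sum over dyadic $N$ by Cauchy--Schwarz, inserting the weight $N^{-\theta}\cdot N^\theta$ and using $\sum_{N\geq 1}N^{-2\theta}<\infty$ for any $\theta>0$; the resulting $\ell^2_N$ sum is equivalent, via Littlewood--Paley, to the Sobolev norms $\Vert J_x^{-(1-\delta)/4+\theta}u\Vert_{L^\infty_TL^2_x}$ and $\Vert J_x^{-(1+3\delta)/4+\theta}F\Vert_{L^2_TL^2_x}$ appearing on the right-hand side of \eqref{smoothing_eff}. The low-frequency part $N\lesssim 1$ is handled separately by Bernstein's inequality with a single interval $\tau=T$, giving a harmless contribution absorbed into the same norms.

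The main obstacle is the balance in the choice of $\tau$: it has to be small enough for the source contribution to enjoy the refined $N^{-(1+3\delta)/4}$ gain, yet large enough that the proliferation of the boundary traces $\Vert P_Nu(t_j)\Vert_{L^2_x}$ does not degrade the $N^{-(1-\delta)/4}$ smoothing, and simultaneously the two resulting powers of $T$ must land strictly inside $(1/4,1/2)$. The arbitrarily small Sobolev loss $\theta>0$ is unavoidable here, as it is precisely the price paid for converting the $\ell^2_N$ sum produced by Littlewood--Paley square-function theory into the $\ell^1_N$ sum needed to dominate $\Vert u\Vert_{L^2_TL^\infty_x}$.
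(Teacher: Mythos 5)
Your argument follows the paper's proof quite closely: nonhomogeneous Littlewood--Paley decomposition, partition of $[0,T]$ into subintervals of length $\tau=T^{a}N^{-\delta}$, Duhamel plus the classical smoothing estimate \eqref{strichartz_classical} on each piece, then H\"older and a counting of intervals. Your frequency-localized estimate
\[
\|P_Nu\|_{L^2_TL^\infty_x}\lesssim T^{(2-a)/4}N^{-(1-\delta)/4}\|P_Nu\|_{L^\infty_TL^2_x}+T^{3a/4}N^{-(1+3\delta)/4}\|P_NF\|_{L^2_TL^2_x}
\]
is exactly what the paper obtains (with $a=\kappa$), and the constraint $a\in(1/3,2/3)$ correctly places both exponents in $(1/4,1/2)$.

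However, the final dyadic summation as you state it has a genuine gap. For the first term you propose Cauchy--Schwarz with the weight $N^{-\theta}\cdot N^{\theta}$, arriving at
\[
\Big(\sum_N N^{2\theta-(1-\delta)/2}\|P_Nu\|_{L^\infty_TL^2_x}^2\Big)^{1/2},
\]
and you then identify this $\ell^2_N L^\infty_T L^2_x$ quantity with the Sobolev norm $\|J_x^{\theta-(1-\delta)/4}u\|_{L^\infty_TL^2_x}$. That identification fails: the Sobolev norm is the $L^\infty_T\ell^2_N L^2_x$ quantity, and Minkowski's inequality gives $L^\infty_T\ell^2_N\le \ell^2_N L^\infty_T$, not the reverse. (A concrete counterexample: concentrate each frequency block of $u$ on a disjoint window of times.) So your $\ell^2_N L^\infty_T$ expression is in general \emph{larger} than the Sobolev norm and cannot be dominated by it. Note that this problem is specific to the $u$ term; for the $F$ term, $L^2_T$ and $\ell^2_N$ do commute, so Cauchy--Schwarz is fine there. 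The paper avoids the issue entirely by summing via
\[
\sum_N\|P_Nu\|_{L^2_TL^\infty_x}\lesssim \sup_N N^{\theta}\|P_Nu\|_{L^2_TL^\infty_x},
\]
and then, for each fixed $N$, bounding $N^{\theta-(1-\delta)/4}\|P_Nu\|_{L^\infty_TL^2_x}\lesssim\|J_x^{\theta-(1-\delta)/4}u\|_{L^\infty_TL^2_x}$ using only the $L^2$-boundedness of the single projection $P_N$, so that the supremum over $N$ is harmless. Replacing your Cauchy--Schwarz step for the homogeneous term by this $\ell^1\le(\text{summable weights})\times\ell^\infty$ argument closes the gap.
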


\begin{proof}
Let $u(t,x)$ be a solution to equation \eqref{airy} defined on $[0,T]$. We use a nonhomogeneous Littlewood-Paley decomposition for the solution, that is, we write $u=\sum_Nu_N$, where $u_N=P_Nu$, and $N$ is a nonhomogeneous dyadic number. In the sequel we shall also use the notation $F_N$ for $P_NF$. At this point it is important to notice that, on the one-hand, from Minkowski inequality we know that \[
\Vert u\Vert_{L^2_TL^\infty_x}\leq \sum_{N}\Vert u_N\Vert_{L^2_TL^\infty_x}\lesssim \sup_{N}N^\theta\Vert u_N\Vert_{L^2_TL^\infty_x},
\]
for any $\theta>0$. While on the other hand, by using the low-frequency projector $P_{\leq1}$, from H\"older and Bernstein inequalities we see that \[
\Vert P_{\leq1}u\Vert_{L^2_TL^\infty_x}\lesssim T^{1/2}\Vert P_{\leq1}u\Vert_{L^\infty_TL^2_x}.
\]
Then, from the inequalities above we infer that it is enough to show that, for any $\delta>0$ and any $N>1$ dyadic number, the following holds \begin{align}\label{smoothing_eq_2}
\Vert u_N\Vert_{L^2_TL^\infty_x}\lesssim T^{\kappa_1}\big\Vert D_x^{-\frac{1}{4}(1-\delta)}u_N\big\Vert_{L^\infty_TL^2_x}+T^{\kappa_2}\big\Vert D_x^{-\frac{1}{4}(1+3\delta)}F_N\big\Vert_{L^2_TL^2_x}.
\end{align}
Now, in order to prove \eqref{smoothing_eq_2}, we chop the time-interval $[0,T]$ into several pieces of length $T^{\kappa}N^{-\delta}$, where $\kappa>0$ stands for a small number that shall be fixed later. In other words, we have \[
[0,T]=\cup_{j\in J}I_j,\quad\hbox{where}\quad I_j:=[a_j,b_j], \quad \vert I_j\vert\sim T^\kappa N^{-\delta}, \quad \hbox{and}\quad \# J\sim T^{1-\kappa}N^\delta.
\]
On the other hand, notice that $u_N(t)$ solves the integral equation \[
u_N(t)=e^{-(t-a_j)\partial_x^3}u_N(a_j)+\int_{a_j}^te^{-(t-t')\partial_x^3}F_N(t')dt',
\]
for all $t\in I_j$. Therefore, by using the classical Strichartz estimate \eqref{strichartz_classical}, as well as H\"older and Bernstein inequalites, we obtain \begin{align*}
\Vert u_N\Vert_{L^2_TL^\infty_x}&=\left(\sum_{j}\Vert u_N\Vert_{L^2_{I_j}L^\infty_x}^2\right)^{1/2}\leq \big(T^\kappa N^{-\delta}\big)^{1/4}\left(\sum_{j}\Vert u_N\Vert_{L^4_{I_j}L^\infty_x}^2\right)^{1/2}
\\ & \lesssim \big(T^\kappa N^{-\delta}\big)^{1/4}\left(\sum_j \Vert D_x^{-1/4}u_N(a_j)\Vert_{L^2_x}^2\right)^{1/2}
\\ & \qquad  +\big(T^\kappa N^{-\delta}\big)^{1/4}\left(\sum_{j}\bigg\Vert \int_{a_j}^te^{-(t-t')\partial_x^3}F_N(t')dt' \bigg\Vert_{L^4_{I_j} L^\infty_x}^2 \right)^{1/2}
\\ & \lesssim \big(T^\kappa N^{-\delta})^{1/4}(T^{1-\kappa}N^\delta)^{1/2}\Vert D_x^{-1/4}u_N\Vert_{L^\infty_TL^2_x}
\\ & \qquad  +\big(T^\kappa N^{-\delta}\big)^{1/4}\left(\sum_{j}T^{\kappa}N^{-\delta}\int_{I_j}\Vert D_x^{-1/4}F_N\Vert_{L^2_x}^2dt\right)^{1/2}
\\ & \lesssim T^{1/2-\kappa/4}\Vert D_x^{-1/4+\delta/4}u_N\Vert_{L^\infty_TL^2_x}+T^{3\kappa/4}\Vert D_x^{-1/4-3\delta/4}F_N\Vert_{L^2_TL^2_x},
\end{align*}
what concludes the proof of \eqref{smoothing_eff} by choosing, for example, $\kappa=\tfrac{1}{2}$.
\end{proof}

\medskip

\section{Energy estimates}\label{energy_section}

\subsection{A priori estimates for solutions}

The goal of this section is to prove the following proposition that give us the key improved energy estimate for smooth solutions of \eqref{gkdv_v}.

\begin{prop}\label{prop_energy_est_impro}
Let $s>1/2$ and $T\in(0,2)$ both fixed. Consider $u\in L^\infty((0,T),H^s_\omega(\R))$ to be a solution to equation \eqref{gkdv_v} associated with an initial data $u_0\in H^s_\omega(\R)$. Then, the following inequality holds: \begin{align}\label{energy_est_improved_ineq}
\Vert u\Vert_{L^\infty_TH^s_\omega}^2&\lesssim \Vert u_0\Vert_{H^s_\omega}^2+T\Vert u\Vert_{L^\infty_TH^s_\omega}\Vert \partial_t \Psi+\partial_x^3\Psi+\partial_xf(\Psi)\Vert_{L^\infty_TH^{s^+}_x}+ T^{1/4}\Vert u\Vert_{L^\infty_TH^s_\omega}^2\times
\\ & \quad \times \mathcal{Q}_*\big(\Vert u\Vert_{L^\infty_TH^{1/2^+}_x},\Vert \Psi\Vert_{L^\infty_TW^{s+1^+,\infty}_x},\Vert \partial_t\Psi\Vert_{L^\infty_{t,x}},\Vert \partial_t\Psi+\partial_x^3\Psi+\partial_xf(\Psi)\Vert_{L^\infty_TH^{-1/2^+}_x}\big),\nonumber
\end{align}
where $\mathcal{Q}_*:\R^4\to\R_+$ is a smooth function.
%\begin{align*}
%& \hbox{Step 2:} \qquad {\color{magenta}T^{1/4}\Vert u\Vert_{L^\infty_tH^s_x}^2\Vert u\Vert_{L^\infty_tH^{1/2^+}_x}}+T^{1/2}\Vert u\Vert_{X^{s-1,1}}\Vert u\Vert_{L^\infty_tH^s_x}\Vert u\Vert_{L^\infty_tH^{1/2^+}_x}
%\\ & \qquad \qquad \qquad  +T^{1/2}\Vert u\Vert_{X^{(-1/2)^+,1}}\Vert u\Vert_{L^\infty_tH^s_x}^2
%%
%%
%%
%%
%%
%\\ & \hbox{Step 3.1.1} \qquad {\color{magenta}T^{1/4}\Vert u\Vert_{L^\infty_tH^s_x}^2\Vert u\Vert_{L^\infty_tH^{1/2^+}_x}^{k-1}}
%\\ & \hbox{Step 3.1.2} \qquad T^{1/2}\Vert u\Vert_{X^{s-1,1}}\Vert u\Vert_{L^\infty_TH^s_x}\Vert u\Vert_{L^\infty_tH^{1/2^+}_x}^{k-1}+T^{1/2}\Vert u\Vert_{X^{(-1/2)^+,1}}\Vert u\Vert_{L^\infty_TH^s_x}^2\Vert u\Vert_{L^\infty_tH^{1/2^+}_x}^{k-2}
%\\ & \hbox{Step 3.2 and 3.3:} \qquad \Vert u\Vert_{L^\infty_TH^s_x}^{2}\big\Vert J_x^{1/2^+}u\big\Vert_{L^2_TL^\infty_x}^2 \Vert u\Vert_{L^\infty_TH^{1/2^+}_x}^{k-3}
%\end{align*}
\end{prop}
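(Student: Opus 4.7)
\medskip

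The plan is to run a frequency-localized energy estimate, Taylor-expand the difference of nonlinearities around $\Psi$, and then handle each resulting multilinear term using the modulation decomposition of Lemma~\ref{lem_high_low_1} together with the Strichartz-type refinement of Lemma~\ref{lem_smoothing}. Concretely, I would apply the projector $P_N$ to \eqref{gkdv_v}, pair with $P_N u$ and integrate, producing, after summing against the weights $\omega_N^2 N^{2s}$,
\begin{align*}
\tfrac12\|u(t)\|_{H^s_\omega}^2 - \tfrac12\|u_0\|_{H^s_\omega}^2
&= -\sum_N \omega_N^2 N^{2s}\!\int_0^t\!\!\int_\R P_N u\,\partial_x P_N[f(u+\Psi)-f(\Psi)]\,dx\,dt'\\
&\quad - \sum_N \omega_N^2 N^{2s}\!\int_0^t\!\!\int_\R P_N u \cdot P_N G\,dx\,dt',
\end{align*}
where $G:=\partial_t\Psi+\partial_x^3\Psi+\partial_x f(\Psi)$. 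The source term is immediate from Cauchy–Schwarz: bounding $\omega_N\lesssim N^{\delta_*}$ absorbs the weight at the cost of passing from $H^s$ to $H^{s^+}$ on $G$, which produces the second term on the right of \eqref{energy_est_improved_ineq}.

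\medskip

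For the nonlinear contribution I would use the analyticity hypothesis \eqref{hyp_f_nonl} to write
\[
f(u+\Psi)-f(\Psi)=\sum_{k\ge 1}\frac{f^{(k)}(\Psi)}{k!}\,u^k,
\]
and examine each term $-\sum_N \omega_N^2 N^{2s}\int P_N u\,\partial_x P_N\bigl(b_k(\Psi)u^k\bigr)$ where $b_k(\Psi)=f^{(k)}(\Psi)/k!$. The key case is $k=1$, which after integrating by parts is of transport type and produces commutators $[P_N\partial_x, f'(\Psi)]u$; this can be handled thanks to $\Psi\in L^\infty_t W^{s+1^+,\infty}_x$ via Lemma~\ref{pseudo_holder_l2} and standard Kato–Ponce-type commutator bounds. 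The real work is in $k\ge 2$, especially $k=2$, where the cubic expression exhibits the standard mKdV-type derivative loss.

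\medskip

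To absorb this loss, I would perform a Littlewood–Paley decomposition of every factor and split the high–high–low regime using the identity $\mathds 1_T=\mathds 1_{T,R}^{\mathrm{low}}+\mathds 1_{T,R}^{\mathrm{high}}$ from \eqref{decomposition} at a level $R$ tuned to the resonance scale $N_1 N_2 N_3$ given by Lemma~\ref{resonant_bourgain}. On the low-modulation piece, combining the symmetrization obtained by moving derivatives in the symbol $a(\xi_1,\dots,\xi_k)$ defined in \eqref{definition_symbol_a} with Lemma~\ref{pseudo_holder_l2} and the refined Strichartz bound \eqref{smoothing_eff} at regularity $1/2^+$ provides the $T^{1/4}\|u\|_{L^\infty_TH^s_\omega}^2$ prefactor, after exchanging one factor for an $L^2_TL^\infty_x$ norm. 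On the high-modulation piece one integrates the time-oscillation of $\mathds 1_{T,R}^{\mathrm{high}}$ into the multiplier, then uses $Q_{\sim L}$ localization together with $\|u\|_{X^{s-1,1}_T}$; the latter is itself controlled via Lemma~\ref{mst_basic_lemma} by the $H^s_\omega$ and $H^{s^+}$ norms appearing in the statement. The factors involving $\Psi$ enter as $L^\infty_{t,x}$ coefficients together with their derivatives in $W^{s+1^+,\infty}$, which is why these norms appear inside $\mathcal Q_*$; the time derivative $\partial_t\Psi$ arises after integration by parts in $t$ on the low-modulation piece when the $t$-derivative hits the Taylor coefficient $b_k(\Psi)$.

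\medskip

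Finally, summing over $k\ge 1$ I would use the hypothesis $\limsup_{k}|a_k|^{1/k}=0$ together with $|f^{(k)}(\Psi)|\le \sum_{j\ge k}\frac{j!}{(j-k)!}|a_j|\|\Psi\|_{L^\infty}^{j-k}$ and the Sobolev-algebra bound \eqref{sobolev_embedding} to show the series in $k$ converges, producing a smooth function of the norms $\|u\|_{L^\infty_TH^{1/2^+}}$, $\|\Psi\|_{L^\infty_TW^{s+1^+,\infty}}$, $\|\partial_t\Psi\|_{L^\infty_{t,x}}$ and $\|G\|_{L^\infty_TH^{-1/2^+}}$. The main obstacle I anticipate is the bookkeeping in the $k=2$ trilinear estimate: one must carefully arrange the symmetrization of the symbol $a(\xi_1,\xi_2)$ so that no net derivative falls on the two high-frequency copies of $u$, exactly as in Lemma~\ref{pseudo_holder_l2}, while retaining enough room in the time exponent to end up with $T^{1/4}$ and not merely $T^{1/2}$.
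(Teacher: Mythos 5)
Your top-level strategy — frequency-localized energy estimate, Taylor expansion, time-indicator splitting $\mathds 1_T=\mathds 1_{T,R}^{\mathrm{low}}+\mathds 1_{T,R}^{\mathrm{high}}$, modulation decomposition tied to the resonance function, and the refined Strichartz estimate to close — is the right one and matches the paper. The expansion $f(u+\Psi)-f(\Psi)=\sum_k \frac{f^{(k)}(\Psi)}{k!}u^k$ is a cosmetic variant of the paper's $\sum_k a_k[(u+\Psi)^k-\Psi^k]$ followed by the binomial expansion into monomials $u^k\Psi^m$; both produce the same zoo of multilinear terms, although the paper's version avoids having to estimate the derivatives of $f^{(k)}(\Psi)$ and treats the $\Psi$-power as an explicit extra factor in the pseudoproduct.

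However, there are genuine gaps in how you deploy the two decompositions. First, the truncation level must be chosen \emph{strictly below} the resonance scale: the paper takes $R=N_1N_3$ so that in the resonant region ($N_2\gtrsim N_1$) one has $N^*=N_1N_2N_3\gg R$, which is exactly what the last inequality in Lemma~\ref{lem_high_low_1} requires; setting $R$ ``tuned to'' $N_1N_2N_3$ would collapse this separation and break the argument. Second, you have the roles of the two pieces reversed: the $\mathds 1_{T,R}^{\mathrm{high}}$ term is disposed of \emph{directly} by the smallness $\Vert \mathds 1_{T,R}^{\mathrm{high}}\Vert_{L^{4/3}}\lesssim T^{1/4}R^{-3/4}$ together with Lemma~\ref{pseudo_holder_l2} (no modulation projectors and no $X^{s-1,1}$ there), while it is the $\mathds 1_{T,R}^{\mathrm{low,low}}$ piece that is further decomposed in modulation and where the factor carrying $Q_{\gtrsim N^*}$ is measured in $X^{s-1,1}$ (or $X^{(-1/2)^+,1}$); the Strichartz estimate $L^2_TL^\infty_x$ enters only in the non-resonant frequency regions $\mathbf{N}^2$, $\mathbf{N}^3$ (resp.\ $\mathcal N_1$, $\mathcal N_3$) where $N_3\lesssim_k N_4$ and no resonance gain is available. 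You also miss the mechanism by which $\partial_t\Psi$ enters: it does not come from integrating by parts in $t$, but from Lemma~\ref{resonant_bourgain} forcing the $\Psi^m$-factor to carry the high modulation $\gtrsim N^*$ in the last case of the decomposition, after which one observes that since $N_{k+2}^3\ll N^*$ this modulation is time-frequency and is controlled by $\Vert\partial_t(\Psi^m)\Vert_{L^\infty_{t,x}}$. Finally, your worry about $T^{1/4}$ vs.\ $T^{1/2}$ is moot since $T<2$ makes all positive powers of $T$ comparable up to a constant.
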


\begin{proof}
First of all, in order to take advantage of Bourgain spaces, we have to extend the function $u(t)$ from $(0,T)$ to the whole line $\R$. Hence, we benefit from the extension operator $\rho_T$ defined in \eqref{rho_def}, which we use to take an extension $\tilde{u}:=\rho_T[u]$ defined on $\R^2$, such that $\Vert \tilde{u}\Vert_{\mathcal{M}}\leq 2\Vert u\Vert_{\mathcal{M}_T}$. For the sake of notation, we drop the tilde in the sequel.

%In order to do so, following \cite{MaNa} we introduce the extension operator $\rho_T$, which is given by %\[
%\rho_Tu(t):=\eta(t)u\left(T\mu\big(\tfrac{t}{T}\big)\right),
%\] 
%where $\eta$ stands for the cut-off function introduced in \eqref{} and $\mu(t):=\max\{0,1-\vert 1-t\vert\}$. The main advantage of this extension operator $\rho_T$ is that it defines a bounded operator from $X^{s,b}_T$ into $X^{s,b}$, as well as from $L^p((0,T),X)$ into $L^p((0,T),X)$ for any $b\in(-\infty,1]$, $p\in [1,\infty]$, $s\in\R$ and any Banach space $X$. Moreover, these bounds are uniform for $T\in(0,1)$. Hence, we can now take an extension $\tilde{u}$ of $u$ supported on $(-2,2)$ such that....  For the sake of notation we drop the tilde in the sequel.

\medskip

Now we seek to prove \eqref{energy_est_improved_ineq}. We begin by applying the frequency projector $P_N$ to equation \eqref{gkdv_v}, with $N>0$ dyadic but arbitrary. Notice that, on account of Remark \ref{rem_spaces_ic_duhamel}, we have \[
P_Nu\in C([0,T],H^\infty) \quad \hbox{and} \quad \partial_tP_Nu\in L^\infty((0,T),H^\infty).
\]
Therefore, taking the $L^2_x$-scalar product of the
resulting equation against $P_Nu$, multiplying the result by $\omega_N^2\langle N\rangle^{2s}$ and then integrating on $(0, t)$ with $0<t<T$, we obtain
\begin{align*}
\omega_N^2\langle N\rangle^{2s}\Vert P_Nu(t)\Vert_{L^2}^2&=\omega_N^2\langle N\rangle^{2s}\Vert P_Nu_0\Vert_{L^2}^2
\\ & \quad -\omega_N^2\langle N\rangle^{2s}\int_0^t\int_\R P_N\big(\partial_t\Psi+\partial_x^3\Psi+\partial_xf(u+\Psi)\big)P_Nu. 
\end{align*}
Thus, by applying Bernstein inequality we are lead to 
\begin{align*}
\Vert P_N u(t)\Vert_{H^s_\omega}^2\lesssim \Vert P_Nu_0\Vert_{H^s_\omega}^2+\omega_N^2\langle N\rangle^{2s}\sup_{t\in(0,T)} \left\vert\int_0^t\int_\R P_N\big(\partial_t\Psi+\partial_x^2\Psi+f(u+\Psi)\big)\partial_xP_Nu\right\vert.
\end{align*}
Thus, from the previous computation we infer that, in order to conclude the proof of the proposition, we need to control the sum over all $N>0$ dyadic of the second term in the latter inequality. We divide the analysis into several steps, each of which dedicated to bound one of the following integrals:
\begin{align}
\mathrm{I}&:=\sum_{N>0}\omega_N^2\langle N\rangle^{2s}\sup_{t\in(0,T)}\left\vert\int_0^t\int_\R\partial_xP_N(f(u+\Psi)-f(\Psi))P_Nu\right\vert,\label{sum_I_proof}
\\ \mathrm{II}&:= \sum_{N>0}\omega_N^2\langle N\rangle^{2s}\sup_{t\in(0,T)}\left\vert\int_0^t\int_\R P_N(\partial_t\Psi+\partial_x^3\Psi+\partial_xf(\Psi))P_Nu\right\vert.\nonumber
\end{align}
Before going further we recall that due to the analyticity hypothesis \eqref{hyp_f_nonl} we can write
\[
f\big(u(t,x)+\Psi(t,x)\big)=\sum_{k=0}^{\infty}a_k\big(u(t,x)+\Psi(t,x)\big)^k \quad \hbox{and}\quad f\big(\Psi(t,x)\big)=\sum_{k=0}^{\infty}a_k\Psi^k(t,x).
\]
With this in mind, from now on, for any $n,m\in\N$, we shall denote by $\mathrm{I}_{u^n}$ and $\mathrm{I}_{u^n\Psi^m}$ the quantity $\mathrm{I}$ above once $f(u+\Psi)$ is replaced by $u^k$ and $u^k\Psi^m$ respectively, that is, \begin{align}
\mathrm{I}_{u^k}&:= \sum_{N>0}\omega_N^2\langle N\rangle^{2s}\sup_{t\in(0,T)}\left\vert\int_0^t\int_\R\partial_xP_N(u^k)P_Nu\right\vert, \label{I_un}
\\ \mathrm{I}_{u^k\Psi^m}&:=\sum_{N>0}\omega_N^2\langle N\rangle^{2s}\sup_{t\in(0,T)}\left\vert\int_0^t\int_\R\partial_xP_N(u^k\Psi^m)P_Nu\right\vert.\label{I_un_psim}
\end{align}
We point out that, in the sequel, we shall systematically omit most of the factors depending on $k$ by hiding them in a $\lesssim_k$-sign. This sign is defined exactly as ``$\lesssim$'' in Section \ref{preliminaries} but allowing the constant $c$ to depend on $k$. Notice that, in order to make sense of the sum (in $k$) of all the following bounds, we only need to be careful that the final implicit constant depends at most as $c^k$ for some constant $c>0$. 

\medskip

\textbf{Step 1:} We begin by controlling $\mathrm{II}$ right away. In fact, by using hypothesis \eqref{hyp_psi}, we infer that it is enough to use Cauchy-Schwarz and Bernstein inequalities, from where we obtain \begin{align*}
\mathrm{II}&\lesssim \int_0^T\sum_{N>0}\omega_N^2\langle N\rangle^{2s}\big\Vert P_N\big(\partial_t\Psi+\partial_x^3\Psi+\partial_xf(\Psi)\big)\big\Vert_{L^2_x}\Vert P_Nu(s,\cdot)\Vert_{L^2_x}ds
\\ & \lesssim T\Vert u\Vert_{L^\infty_TH^s_\omega}\Vert \partial_t \Psi+\partial_x^3\Psi+\partial_xf(\Psi)\Vert_{L^\infty_TH^{s^+}_x},
\end{align*}
where we have used the hypotheses made on $\omega_N$ in Section \ref{function_spaces_section}. This concludes the proof of the first case.

\medskip

\textbf{Step 2:} Now we aim to control the general case for $\mathrm{I}_{u^k}$ in \eqref{I_un} for all $k\geq 1$, that is, we aim to control the following quantity
\begin{align}\label{def_iuk}
\mathrm{I}_{u^k}&=\sum_{N>0}\omega_N^2\langle N\rangle^{2s}\sup_{t\in(0,T)}\left\vert\int_0^t\int_{\Gamma^{k+1}}a_k(\xi_1,...,\xi_{k+1})\hat{u}(s,\xi_1)...\hat{u}(s,\xi_{k+1})d\Gamma^{k+1}ds\right\vert,
\end{align}
where the symbol $a_k(\xi_1,...,\xi_{k+1})$ is explicitly given by \begin{align*}
a_k\big(\xi_1,...,\xi_{k+1}\big):=i\phi_N^2(\xi_{k+1})\xi_{k+1}.
\end{align*}
We point out that in the previous identity \eqref{def_iuk} we have used both, the fact that $u(t,\cdot)$ is real-valued as well as the fact that $\phi_{N}$ is even. Then, in order to deal with this case, we symmetrize the multiplier $a(\xi_1,...,\xi_{k+1})$, that is, from now on we consider \begin{align*}
\widetilde{a}_k(\xi_1,...,\xi_{k+1}):=\big[a_k(\xi_1,...,\xi_{k+1})\big]_{\mathrm{sym}}=\dfrac{i}{k+1}\sum_{i=1}^{k+1}\phi_N^2(\xi_i)\xi_i.
\end{align*}
Notice that, since $\phi_N^2(\xi_1)\xi_1+\phi_N^2(\xi_2)\xi_2\equiv0$ on $\Gamma^2$, the case $k=1$ immediately vanishes, and hence, from now on we can assume that $k\geq 2$. Thus, by using frequency decomposition and the above symmetrization, the problem of bounding \eqref{def_iuk} is reduced to control the following quantity \begin{align}\label{sums_step31}
\sum_{N>0}\omega_N^2\langle N\rangle^{2s}\sup_{t\in(0,T)}\left\vert\int_0^t\sum_{N_1,...,N_{k+1}}\int_{\Gamma^{k+1}}\widetilde{a}_k(\xi_1,...,\xi_{k+1})\prod_{i=1}^{k+1}\phi_{N_i}(\xi_i)\hat{u}(t',\xi_i)d\Gamma^{k+1}dt'\right\vert.
\end{align}
Moreover, by symmetry, without loss of generality we can always assume that\footnote{If $k=2$, we only assume $N_1\geq N_2\geq N_3$. Notice also that this assumption shall introduce a factor $k^4$ into the following estimates } \[
N_1\geq N_2\geq N_3\geq N_4=\max\{N_4,...,N_{k+1}\}.
\]
Before going further, note that the case $N\lesssim1$ can be treated right away. In fact, from Lemma \ref{pseudo_holder_l2} and Bernstein inequality we see that\footnote{Notice that here, and in all of the bounds below, we obtain a constant $c^{k-1}$ comming from using Sobolev's embedding $k-1$ times.} \begin{align*}
&\sum_{N\lesssim 1}\omega_N^2\langle N\rangle^{2s}\sup_{t\in(0,T)}\left\vert\int_0^t\sum_{N_1,...,N_{k+1}}\int_{\Gamma^{k+1}}\widetilde{a}_k(\xi_1,...,\xi_{k+1})\prod_{i=1}^{k+1}\phi_{N_i}(\xi_i)\hat{u}(t',\xi_i)d\Gamma^{k+1}dt'\right\vert
\\ & \qquad \lesssim_k \int_0^T\sum_{N\lesssim 1}\omega_N^2\langle N\rangle^{2s}\sum_{N_1,...,N_{k+1}}\left\vert\int_{\Gamma^{k+1}}\widetilde{a}_k(\xi_1,...,\xi_{k+1})\prod_{i=1}^{k+1}\phi_{N_i}(\xi_i)\hat{u}(t',\xi_i)d\Gamma^{k+1}\right\vert dt'
\\ & \qquad  \lesssim_k \int_0^T\sum_{N_1,...,N_{k+1}} \omega_N^2N_3\Vert P_{N_1}u(t',\cdot)\Vert_{L^2_x}\Vert P_{N_2}u(t',\cdot)\Vert_{L^2_x}\prod_{i=3}^{k+1}N_i^{1/2}\Vert P_{N_i}u(t',\cdot)\Vert_{L^2_x}dt'
\\ & \qquad \lesssim_k \int_0^T \Vert u(t',\cdot)\Vert_{H^s_\omega}^{2}\Vert u(t',\cdot)\Vert_{H^{1/2^+}_x}^{k-1}dt'
\\ & \qquad \lesssim_k T\Vert u\Vert_{L^\infty_TH^s_\omega}^{2}\Vert u\Vert_{L^\infty_TH^{1/2^+}_x}^{k-1}.
\end{align*}
Therefore, in the sequel we just need to consider the sum over frequencies $N\gg 1$. More precisely, from now on we assume that $N\geq 8^8k$. On the other hand, from the explicit form of $\widetilde{a}_k$, it is not difficult to see that  $\widetilde{a}_k\equiv 0$, unless $ N_1\geq \tfrac{1}{2}N$. Furthermore, due to the additional constraint\footnote{By this we mean the condition $\xi_1+....+\xi_{k+1}=0$. In the sequel, each time we mention ``the constraint imposed by $\Gamma^k$'' we refer to the previous condition with $k$ frequencies.} imposed by $\Gamma^{k+1}$, we must also have that $N_2\geq \tfrac{1}{2k}N_1$. Therefore, roughly (up to a constant involving $k$), we have that $N_1\sim N_2$ with\footnote{Notice that, in the sequel, we shall repeatedly use these relations to absorb factors like $\langle N\rangle^{s}$ with $\Vert P_{N_2}u\Vert_{L^2}$, in the sense that we shall write $\langle N\rangle^{s}\Vert P_{N_2}u\Vert_{L^2}\lesssim_k \Vert P_{N_2}u\Vert_{H^s}$. Due to the above relations, in the worst case this type of bounds shall involve a factor $k^s$ due to the use of $N_2\sim N$.} $N_1\geq \tfrac{1}{2}N$. Then, we split the analysis into three possible cases. First, we divide the space into two regions, namely \begin{align}\label{cases_step31}
\hbox{either }\quad N_3\geq 2^9kN_4 \quad \hbox{ or }\quad N_3<2^9kN_4.
\end{align}
Then, only for the second case, we split the space again into two regions, namely  \begin{align}\label{split_N1}
N_1< 8kN \quad \hbox{and}\quad N_1\geq 8kN. 
\end{align}
The only reason why we separate both cases in \eqref{split_N1} is to be able to justify how we sum over the set $N\gg1$, they can certainly be treated simultaneously though. We choose to separate them for the sake of clarity.

\medskip

Before getting into the details, let us introduce some notation for each of the regions under study. From now on we denote by\footnote{Recall that we are also assuming that $N_1\geq N_2\geq N_3\geq N_4=\max\{N_4,...,N_{k+1}\}$.} \begin{align*}
\mathbf{N}^1&:=\big\{(N_1,...,N_{k+1})\in \mathbb{D}^{k+1}: \, N_3\geq 2^9kN_4\big\},
\\ \mathbf{N}^2&:=\big\{(N_1,...,N_{k+1})\in \mathbb{D}^{k+1}: \, N_1< 8kN \,\hbox{ and } \,N_3< 2^9kN_4\big\},
\\ \mathbf{N}^3&:=\big\{(N_1,...,N_{k+1})\in \mathbb{D}^{k+1}: \, N_1\geq 8kN \,\hbox{ and } \,N_3< 2^9kN_4\big\},
\end{align*} 
and by $\mathcal{G}_1$, $\mathcal{G}_2$ and $\mathcal{G}_3$, the corresponding contribution of \eqref{sums_step31} associated with each of these regions\footnote{That is, the quantity obtained once restricting the inner sum in \eqref{sums_step31} to $\mathbf{N}^1$ and $\mathbf{N}^2$, respectively.}, respectively.

\medskip

Notice that all of the above regions require that $k\geq3$ to be well-defined. However, we point out that the case $k=2$ shall follow directly from the analysis that we shall carry out to deal with the first region above, that is, the region\footnote{In other words, roughly speaking, when $k=2$ we could think of $N_4$ as being equal to $0$, and hence the relation that defines $\mathbf{N}^1$ is always satisfied. Hence, if $k=2$, we only have one case, which corresponds to $\mathbf{N}^1$.} $\mathbf{N}^1$.

\medskip

\textbf{Step 2.1:} In this first sub-step we seek to deal with the first case in \eqref{cases_step31}, that is, to control the contribution of the region $N_3\geq2^9kN_4$. We aim to take advantage of classical Bourgain estimates. In order to do so, we begin using the decomposition given in \eqref{decomposition}, from where we infer that it is enough to control the following quantities \begin{align*}
\mathcal{G}_{1,R}^{\mathrm{high}}& :=\sum_{N\gg 1}\sum_{\textbf{N}^1}\omega_N^2\langle N\rangle^{2s}\sup_{t\in(0,T)}\left\vert \int_{\R^2}\Pi_{\widetilde{a}_k}\big(\mathds{1}_{t,R}^{\mathrm{high}}P_{N_1}u,\mathds{1}_tP_{N_2}u,...,P_{N_k}u\big)P_{N_{k+1}}u\right\vert,
\\ \mathcal{G}_{1,R}^{\mathrm{low,high}}&:=\sum_{N\gg 1}\sum_{\textbf{N}^1}\omega_N^2\langle N\rangle^{2s}\sup_{t\in(0,T)}\left\vert\int_{\R^2}\Pi_{\widetilde{a}_k}\big(\mathds{1}_{t,R}^{\mathrm{low}}P_{N_1}u,\mathds{1}_{t,R}^{\mathrm{high}}P_{N_2}u,P_{N_3}u,...,P_{N_k}u\big)P_{N_{k+1}}u\right\vert,
\\ \mathcal{G}_{1,R}^{\mathrm{low,low}}&:=\sum_{N\gg 1}\sum_{\textbf{N}^1}\omega_N^2\langle N\rangle^{2s}\sup_{t\in(0,T)}\left\vert \int_{\R^2}\Pi_{\widetilde{a}_k}\big(\mathds{1}_{t,R}^{\mathrm{low}}P_{N_1}u,\mathds{1}_{t,R}^{\mathrm{low}}P_{N_2}u,P_{N_3}u,...,P_{N_k}u\big)P_{N_{k+1}}u\right\vert,
\end{align*}
where $R$ stands for a large real number that shall be fixed later. For the sake of clarity we split the analysis into two steps. Before getting into it, let us recall the definition of the resonant relation for $(k+1)$-terms, which is given by
\[
\Omega_k(\xi_1,...,\xi_{k+1})=\xi_1^3+...+\xi_{k+1}^3.
\]
We emphasize that, as an abuse of notation, sometimes we also write $\Omega_k$ with only $k$ entries. However, in that case, $\Omega_k$ is given by $\xi_1^3+...+\xi_k^3-(\xi_1+...+\xi_k)^3$. Notice that both definition are equivalent due to the constraint imposed by $\Gamma^{k}$.

\medskip
 
\textbf{Step 2.1.1:} We begin by considering the case of $\mathcal{G}_{1,R}^{\mathrm{high}}$. The idea is to take advantage of the operator $\mathds{1}_{t,R}^{\mathrm{high}}$ by using Lemma \ref{lem_high_low_1}. In fact, by choosing \begin{align}\label{def_R}
R(N,N_1,...,N_{k+1}):=N_1N_3,
\end{align}
we can bound $\mathcal{G}_{1,R}^{\mathrm{high}}$ by using the first inequality in Lemma \ref{lem_high_low_1}, Lemma \ref{pseudo_holder_l2}, as well as Sobolev's embedding, in the following fashion
\begin{align*}
\mathcal{G}_{1,R}^{\mathrm{high}}&\lesssim_k \sum_{N \gg 1}\sum_{\textbf{N}^1}T^{1/4}\omega_N^2\langle N\rangle^{2s} \Vert \mathds{1}_{T,R}^{\mathrm{high}}\Vert_{L^{4/3}}\left\Vert\int_\R \Pi_{\widetilde{a}_k}\big(P_{N_1}u,...,P_{N_k}u\big)P_{N_{k+1}}u\right\Vert_{L^\infty}
\\ & \lesssim_k \sum_{N \gg 1}\sum_{\textbf{N}^1}T^{1/4}\omega_N^2\langle N\rangle^{2s}N_3R^{-3/4}\big\Vert P_{N_1}u\big\Vert_{L^\infty_tL^2_x}\Vert P_{N_2}u\Vert_{L^\infty_tL^2_x}\prod_{i=3}^{k+1} \Vert P_{N_i}u\Vert_{L^\infty_{t,x}}
\\ & \lesssim_k \sum_{N \gg 1}\sum_{\textbf{N}^1}T^{1/4}N_1^{-1/2}\Vert P_{N_1}u\Vert_{L^\infty_tH^s_\omega}\Vert P_{N_2}u\Vert_{L^\infty_tH^s_\omega}\prod_{i=3}^{k+1} \min\big\{N_i^{\frac{1}{2}},N_i^{-(0^+)}\big\}\Vert P_{N_i}u\Vert_{L^\infty_{t}H^{1/2^+}_x}
\\ & \lesssim_k T^{1/4}\Vert u\Vert_{L^\infty_tH^s_\omega}^{2}\Vert u\Vert_{L^\infty_tH^{1/2^+}_x}^{k-1},
\end{align*}
where we have used the fact that, thanks to the hypothesis on $\omega_N$ in Section \ref{function_spaces_section}, we have the inequality $\omega_N/\omega_{N_i}\lesssim 1$, $i=1,2$. To finish this first case, we point out that, thanks to the operator $\mathds{1}_{t,R}^\mathrm{high}$ acting on the factor $P_{N_2}u$, the same estimates also hold for $\mathcal{G}_{1,R}^{\mathrm{low,high}}$.

\medskip

\textbf{Step 2.1.2:} Now we consider the last term in the decomposition, that is, $\mathcal{G}_{1,R}^{\mathrm{low,low}}$. In fact, first of all, for the sake of notation let us define the following functional
\begin{align*}
&\mathcal{I}\big(u_1,...,u_{k+1}):=\sum_{N\gg 1}\sum_{\textbf{N}^1}\omega_N^2\langle N\rangle^{2s}\sup_{t\in(0,T)}\left\vert \int_{\R^2} \Pi_{\widetilde{a}_k}\big(u_1,...,u_k\big)u_{k+1}\right\vert.
\end{align*}
Then, we claim that, due to the relationship between the frequencies belonging to $\mathbf{N}^1$, the resonant relation satisfies \[
\big\vert\Omega_k(\xi_1,...,\xi_k)\big\vert\sim N_1N_2N_3.
\]
In fact, let us start by recalling that, due to the additional constraint imposed by $\Gamma^{k+1}$, we have the relation $\xi_1+...+\xi_{k+1}=0$. Then, by using the bound $N_3>2^9k^3N_4$, we deduce  \begin{align}
\big\vert\Omega_k(\xi_1,...,\xi_k)\big\vert&=\big\vert\xi_1^3+\xi_2^3+\xi_3^3+...+\xi_{k+1}^3\big\vert\nonumber
\\ & =\big\vert\xi_2^3+\xi_3^3-(\xi_2+\xi_3+...+\xi_{k+1})^3\big\vert+O(N_4^3)\nonumber
\\ & = 3\big\vert(\xi_2+\xi_3)\xi_2\xi_3\big\vert+O(N_1^2N_4)\nonumber
\\ & = 3\big\vert\xi_1\xi_2\xi_3\big\vert+O(N_1^2N_4)\sim N_1N_2N_3.\label{resonant_o_n1n3}
\end{align}
Therefore, taking advantage of the above relation, we can now decompose $\mathcal{G}_{1,R}^{\mathrm{low,low}}$ with respect to modulation variables in the following fashion
\begin{align*}
\big\vert\mathcal{G}_{1,R}^{\mathrm{low,low}}\big\vert &\leq \mathcal{I}\big(Q_{\gtrsim N^*}\mathds{1}_{t,R}^{\mathrm{low}}P_{N_1}u,\mathds{1}_{t,R}^{\mathrm{low}}P_{N_2}u,P_{N_3}u,...,P_{N_{k+1}}u\big)
\\ & \quad +\mathcal{I}\big(Q_{\ll N^*}\mathds{1}_{t,R}^{\mathrm{low}}P_{N_1}u,Q_{\gtrsim N^*}\mathds{1}_{t,R}^{\mathrm{low}}P_{N_2}u,P_{N_3}u,...,P_{N_{k+1}}u\big)
\\ & \quad +\mathcal{I}\big(Q_{\ll N^*}\mathds{1}_{t,R}^{\mathrm{low}}P_{N_1}u,Q_{\ll N^*}\mathds{1}_{t,R}^{\mathrm{low}}P_{N_2}u,Q_{\gtrsim N^*}P_{N_3}u,...,P_{N_{k+1}}u\big) 
\\ & \quad +...+\mathcal{I}\big(Q_{\ll N^*}\mathds{1}_{t,R}^{\mathrm{low}}P_{N_1}u,Q_{\ll N^*}\mathds{1}_{t,R}^{\mathrm{low}}P_{N_2}u,Q_{\ll N^*}P_{N_3}u,...,Q_{\gtrsim N^*}P_{N_{k+1}}u\big)
\\ & =: \mathcal{I}_1+...+\mathcal{I}_{k+1},
\end{align*}
where $N^*$ stands for $N^*:=N_1N_2N_3$. At this point it is important to notice that, since in this case we have $N_2\geq \tfrac{1}{8}N_1$, then we must also have that $N^*\gg N_1N_3=R$, what allows us to use the last inequality in Lemma \ref{lem_high_low_1}. Thus, bounding in a similar fashion as before, by using H\"older and Bernstein inequalities, as well as Lemma \ref{lem_high_low_1}, Lemma \ref{pseudo_holder_l2} and classical Bourgain estimates, we obtain
\begin{align*}
\mathcal{I}_{1}&\lesssim_k \sum_{N\gg 1}\sum_{\textbf{N}^1}\omega_N^2\langle N\rangle^{2s}N_3\big\Vert Q_{\gtrsim N^*}\mathds{1}_{T,R}^{\mathrm{low}}P_{N_1}u\big\Vert_{L^2_tL^2_x}\Vert \mathds{1}_{T,R}^{\mathrm{low}}P_{N_2}u\Vert_{L^2_tL^2_x}\prod_{i=3}^{k+1}\Vert P_{N_i}u\Vert_{L^\infty_tL^\infty_x}
\\ & \lesssim_k \sum_{N\gg 1}\sum_{\textbf{N}^1}N_2^{(-1)^+}\big\Vert Q_{\gtrsim N^*}\mathds{1}_{T,R}^{\mathrm{low}}P_{N_1}u\big\Vert_{X^{s-1,1}}\Vert \mathds{1}_{T,R}^{\mathrm{low}}\Vert_{L^2}\times 
\\ & \qquad \quad \times \Vert P_{N_2}u\Vert_{L^\infty_tH^s_x}\prod_{i=3}^{k+1}\min\{N_i^{1/2},N_i^{-(0^+)}\}\Vert P_{N_i}u\Vert_{L^\infty_tH^{1/2^+}_x}
\\ & \lesssim_k T^{1/2}\Vert u\Vert_{X^{s-1,1}}\Vert u\Vert_{L^\infty_tH^s_x}\Vert u\Vert_{L^\infty_tH^{1/2^+}_x}^{k-1}.
\end{align*}
Notice that, we have absorbed $\omega_N$ with $N_2^{-\varepsilon}$ thanks to the assumptions made in Section \ref{function_spaces_section}. Moreover, it is not difficult to see that, by following the same lines (up to trivial modifications), we can also bound $\mathcal{I}_2$, from where we obtain the same bound. On the other hand, to control $\mathcal{I}_3$ we use again both Lemma \ref{lem_high_low_1} and \ref{pseudo_holder_l2}, as well as H\"older and Bernstein inequalities, from where we obtain   \begin{align*}
\mathcal{I}_{3}&\lesssim_k \sum_{N\gg 1}\sum_{\textbf{N}^1}\omega_N^2\langle N\rangle^{2s}N_3\big\Vert Q_{\ll N^*}\mathds{1}_{T,R}^{\mathrm{low}}P_{N_1}u\big\Vert_{L^2_tL^2_x}\times
\\ & \qquad \quad  \times \Vert Q_{\ll N^*}\mathds{1}_{T,R}^{\mathrm{low}}P_{N_2}u\Vert_{L^\infty_tL^2_x}\Vert Q_{\gtrsim N^*}P_{N_3}u \Vert_{L^2_tL^\infty_x}\prod_{i=4}^{k+1}\Vert P_{N_i}u\Vert_{L^\infty_tL^\infty_x}
\\ & \lesssim_k \sum_{N\gg 1}\sum_{\textbf{N}^1}N_2^{(-1)^+}\Vert \mathds{1}_{T,R}^{\mathrm{low}}\Vert_{L^2}\big\Vert P_{N_1}u\big\Vert_{L^\infty_tH^s_x}\Vert P_{N_2}u\Vert_{L^\infty_tH^s_x}\min\{N_3^{1^-}N_1^{-1},N_3^{-(0^+)}\}\times 
\\ & \qquad \quad \times \Vert Q_{\gtrsim N^*}P_{N_3}u\Vert_{X^{(-1/2)^+,1}}\prod_{i=4}^{k+1}\min\{N_i^{1/2},N_i^{-(0^+)}\}\Vert P_{N_i}u\Vert_{L^\infty_tH^{1/2^+}_x}
\\ & \lesssim_k T^{1/2}\Vert u\Vert_{L^\infty_tH^s_x}^2\Vert u\Vert_{X^{(-1/2)^+,1}}\Vert u\Vert_{L^\infty_tH^{1/2^+}_x}^{k-2}.
\end{align*}
Notice that all the remaining cases $\mathcal{I}_i$, $i=4,...,k+1$, follow very similar lines to the latter case (up to trivial modifications), and they provide exactly the same bound. We omit the proof of these cases. 

\medskip

\textbf{Step 2.2:} Now we aim to deal with the region $N_1<8kN$. In fact, in this case it is enough to notice that, combining both, the hypotheses and the additional constraint imposed by $\Gamma^{k+1}$, we can write\footnote{Notice that this shall introduce a factor $k$ into the following estimates.} \[
N_1\in[\tfrac{1}{2}N,4kN] \quad \hbox{ and } \quad N_2\in[\tfrac{1}{2k}N_1,N_1].
\]
Therefore, up to a factor $k$, we deduce that $N_1\sim N$ and $N_2\sim N$. Hence, by using H\"older and Bernstein inequalities, as well as Lemma \ref{pseudo_holder_l2}, we get that \begin{align*}
\vert\mathcal{G}_2\vert& \lesssim_k \int_0^T\sum_{N\gg 1}\sum_{\mathbf{N}^2}\omega_N^2\langle N\rangle^{2s}\left\vert\int_{\Gamma^{k+1}}\widetilde{a}_k(\xi_1,...,\xi_{k+1})\prod_{i=1}^{k+1}\phi_{N_i}(\xi_i)\hat{u}(s,\xi_i)d\Gamma^{k+1}\right\vert ds
\\ & \lesssim_k \int_0^T\sum_{N\gg 1}\sum_{\mathbf{N}^2}\min\{N_3,N_3^{-(0^+)}\}\Vert P_{N_1}u(s,\cdot)\Vert_{H^s_\omega}\Vert P_{N_2}u(s,\cdot)\Vert_{H^s_\omega}\Vert J_x^{1/2^+}P_{N_3}u(s,\cdot)\Vert_{L^\infty_x}\times
\\ & \qquad \qquad \times \Vert J_x^{1/2^+}P_{N_4}u(s,\cdot)\Vert_{L^\infty_x}\prod_{i=5}^{k+1}\min\big\{N_i^{1/2},N_i^{-(0^+)}\big\}\Vert P_{N_i}u(s,\cdot)\Vert_{H^{1/2^+}_x}ds
\\ & \lesssim_k \Vert u\Vert_{L^\infty_TH^s_\omega}^{2}\big\Vert J_x^{1/2^+}u\big\Vert_{L^2_TL^\infty_x}^2 \Vert u\Vert_{L^\infty_TH^{1/2^+}_x}^{k-3}. 
\end{align*}
We emphasize that, in this case, to sum over $N\gg1$ we have used the fact that, for any function $f\in H^s_\omega(\R)$, the sequence $\{\Vert P_{2^n}f\Vert_{H^s_\omega}\}_{n\in\Z}$ belongs to $\ell^2(\Z)$.

\medskip

\textbf{Step 2.3:} Finally, it only remains to consider the case where $N_1\geq 8kN$. In fact, in this case, note that inequality $N_1\geq8kN$ implies in particular that $N_2\geq 4N$, and hence, we must also have $N_3\geq \tfrac{1}{2}N$, otherwise $\widetilde{a}_k\equiv0$. Then, we can proceed similarly as in the latter step but using the factor $N_3^{-}$ to sum over $N\gg 1$. Note that, in this case, we also have to use the fact that $\Vert P_{N_1}u(s,\cdot)\Vert_{H^s_\omega}$ and $\Vert P_{N_2}u(s,\cdot)\Vert_{H^s_\omega}$ are both square summable. The proof of Step $3$ is finished.

\medskip

\textbf{Step 3:} Finally, we consider the general case $\mathrm{I}_{u^k\Psi^m}$, where $k,m\geq 1$. As we have mentioned before, for small frequencies $N\lesssim 1$ we can directly bound the sum by simply using H\"older inequality as follows
\begin{align}\label{step3_small_freq}
\sum_{N\lesssim1}\omega_N^2\langle N\rangle^{2s}\sup_{t\in(0,T)}\left\vert\int_0^t\int_\R\partial_xP_N(u^k\Psi^m)P_Nu\right\vert\lesssim_k T\Vert u\Vert_{L^\infty_tH^s_\omega}^2\Vert u\Vert_{L^\infty_tH^{1/2^+}_x}^{k-1}\Vert\Psi\Vert_{L^\infty}^m.
\end{align}
Therefore, in the sequel we only consider the case where\footnote{Notice that this introduces another factor $k$ into inequality \eqref{step3_small_freq} comming from the use of $N\lesssim 1$ when controlling the operator $\partial_x$.} $N\gg k$. On the other hand, notice that, by using Plancherel Theorem we can rewrite the remaining quantity as \begin{align}\label{def_iukpsim}
\sum_{N\gg 1}\omega_N^2\langle N\rangle^{2s}\sup_{t\in(0,T)}\left\vert \int_0^t\int_{\Gamma^{k+2}} \mathbf{a}_k(\xi_1,...,\xi_{k+2})\hat{u}(\xi_1)...\hat{u}(\xi_{k+1})\widehat{\Psi^m}(\xi_{k+2})d\Gamma^{k+2}\right\vert,
\end{align}
where the symbol $\mathbf{a}_k(\xi_1,...,\xi_{k+2})$ is explicitly given by \[
\mathbf{a}_k(\xi_1,...,\xi_{k+2}):=i\phi_N^2(\xi_{k+1})\xi_{k+1}.
\]
In a similar spirit as for Steps $2$, in order to deal with this case we perform a symmetrization argument. Indeed, by symmetrizing the symbol we are lead to consider \[
\tilde{\mathbf{a}}_k(\xi_1,...,\xi_{k+2})=\big[\mathbf{a}_k(\xi_1,...,\xi_{k+2})\big]_{\mathrm{sym}}:=\dfrac{i}{k+1}\sum_{i=1}^{k+1}\phi_N^2(\xi_i)\xi_i.
\]
Then, by using frequency decomposition, the problem of bounding \eqref{def_iukpsim} is reduced to control the following quantity \begin{align}\label{def_int_step_7}
\sum_{N\gg1}\omega_N^2\langle N\rangle^{2s}\sup_{t\in(0,T)}\left\vert\int_0^t\sum_{N_1,...,N_{k+2}}\int_{\Gamma^{k+2}}\tilde{\mathbf{a}}_k(\xi_1,...,\xi_{k+2})\phi_{N_{k+2}}(\xi_{k+2})\widehat{\Psi^m}(\xi_{k+2})\prod_{i=1}^{k+1}\phi_{N_i}(\xi_i)\hat{u}(\xi_i)\right\vert.
\end{align}
%However, since we are only considering the region $N_{k+2}<???$, we have that \[
%\widetilde{\mathbf{a}}_k(\xi_1,...,\xi_{k+2})=\dfrac{i}{k+1}\sum_{i=1}^{k+2}\phi_N^2(\xi_i)\xi_i.
%\]
%Notice that this latter identity shall allow us to use Lemma \ref{n3n4_multi}. 
Hence, by symmetry, without loss of generality from now on we assume\footnote{For the cases $k=1,2$ we only assume that $N_1\geq N_2$ and $N_1\geq N_2\geq N_3$, respectively. Once again, notice that these assumptions introduces a factor $k^4$ into the following estimates.} that $N_1\geq N_2\geq N_3\geq N_4=\max\{N_4,...,N_{k+1}\}$. We point out that, in this case, we consider\footnote{Since $N_{k+2}\in\mathbb{D}_{\mathrm{nh}}$, when $N_{k+2}=1$ we consider $\eta(\xi_{k+2})$ instead of $\phi_{N_{k+2}}(\xi_{k+2})$ in \eqref{def_int_step_7}, where $\eta(\cdot)$ is defined in \eqref{def_eta}.} $N_{k+2}\in \mathbb{D}_{\mathrm{nh}}$. Before going further notice that there is an important case that can be treated without any further decomposition. In fact, let us consider the region $8^{9}kN_{k+2}\geq N$. We begin by restricting ourselves to the case $N_2\geq 1$. Let us denote the set of indexes associated with all the above constraints by $\textbf{N}_{k+2}$. Then, by using Plancherel Theorem to go back to physical variables, taking advantage of the fact that $\Psi\in W^{(s+1)^+,\infty}_x$, we can control $\mathrm{I}_{u^k\Psi^m}$, in this region, by\footnote{Notice that here we obtain another factor $k^{s+1+}$ comming from the relation $8^9kN_{k+2}\geq N$.} \begin{align*}
&\sum_{N\gg 1}\omega_N^2\langle N\rangle^{2s}\sup_{t\in(0,T)}\left\vert\int_0^t\sum_{\textbf{N}_{k+2}}\int_{\Gamma^{k+2}}\tilde{\mathbf{a}}_k(\xi_1,...,\xi_{k+2})\phi_{N_{k+2}}(\xi_{k+2})\widehat{\Psi^m}(\xi_{k+2})\prod_{i=1}^{k+1}\phi_{N_i}(\xi_i)\hat{u}(\xi_i)\right\vert
\\ & \qquad \lesssim_k \int_0^T \sum_{N\gg 1}\sum_{\mathbf{N}_{k+2}}\omega_N^2\langle N\rangle^{2s}NN_{k+1}^{-(s+1)^+}\Vert P_{N_1}u(t',\cdot)\Vert_{L^2_x}\times 
\\ & \qquad \qquad \times \Vert P_{N_2}u(t',\cdot)\Vert_{L^2_x}\left\Vert P_{N_{k+2}}\big(\Psi^m(t',\cdot)\big)\right\Vert_{W^{(s+1)^+,\infty}_x}\prod_{i=3}^{k+1}\Vert P_{N_i}u(t',\cdot)\Vert_{L^\infty_x}dt'
\\ & \qquad \lesssim_k T\Vert u\Vert_{L^\infty_TH^s_\omega}^2\Vert u\Vert_{L^\infty_TH^{1/2^+}_x}^{k-1}\Vert \Psi\Vert_{L^\infty_TW^{(s+1)^+,\infty}_x}^m. 
\end{align*}
Here, we have absorbed one of the factors $\omega_N$ with $N_{k+1}^{-\varepsilon}$, thanks to the hypotheses made in Section \ref{function_spaces_section}.
Notice that, to deal with the case $N_2\leq 1$, it is enough to sum over $N_2$ inside the absolute value (before using H\"older inequality), so that we obtain a factor $\Vert P_{\lesssim1}u\Vert_{L^2}$ in the right-hand side (without any series in $N_2$). Therefore, in the sequel we can assume that $8^9kN_{k+2}< N$. Moreover, in this region we have $\phi_N(\xi_{k+2})\equiv0$, and hence we can write \[
\tilde{\mathbf{a}}_k(\xi_1,...,\xi_{k+2})=\dfrac{i}{k+1}\sum_{i=1}^{k+2}\phi_N^2(\xi_i)\xi_i.
\]
This is somehow important since it shall allow us to use Lemma \ref{pseudo_holder_l2} with no problems. 

\medskip

Now, in the same spirit as in Step $2$, we split the analysis into several cases, namely 
\begin{align*}
& 1) \ \, N_3< 8^8kN_{k+2},
\\ & 2) \ \, N_3\geq 8^8kN_{k+2} \, \hbox{ and } \, N_3\geq 2^9kN_4,
\\ & 3) \ \,  N_3\geq 8^8kN_{k+2} \, \hbox{ and } \, N_3<  2^9kN_4.
\end{align*}
Before getting into the details, let us introduce the notation for each of these regions. From now on we denote by $\mathcal{N}_1$, $\mathcal{N}_2$ and $\mathcal{N}_3$ the set of indexes associated with each of these regions\footnote{Recall we are also assuming that $N_1\geq N_2\geq N_3\geq N_4=\max\{N_4,...,N_{k+1}\}$ and $9^9kN_{k+2}<N$.}, and by $\mathrm{G}_1$, $\mathrm{G}_2$ and $\mathrm{G}_3$, the corresponding contribution of \eqref{def_int_step_7} associated with each of them, respectively.

\medskip

In the same spirit as in Step $2$, notice that all of the above regions require that $k\geq3$ to be well-defined. However, the case $k=1$ shall follows directly from the analysis we shall carry out to deal with the first region above, that is, the region $\mathcal{N}_1$. On the other hand, the case $k=2$ shall follow from the analysis associated with\footnote{In other words, roughly speaking, when $k=1$ we could think of $N_3$ as being equal to $0$, and hence the inequality of the first case is always satisfied, while when $k=2$ we could think of $N_4$ being zero, and hence we still have two cases, namely, $(1)$ and $(2)$.} cases $(1)$ and $(2)$ above.

\medskip

\textbf{Step 3.1:} We begin by studying the contribution of $I_{u^k\Psi^m}$ in the region $\mathcal{N}_{1}$. In fact, notice that, in this case, due to the hypotheses $\Psi\in W^{(s+1)^+,\infty}$ as well as the fact that $N_{k+2}\gtrsim N_3\geq \max\{N_4,...,N_{k+1}\}$, we can control the whole sum $\mathrm{G}_{1}$ directly from Lemma \ref{pseudo_holder_l2} and then using Bernstein inequalities, from where we get the bound \begin{align*}
\vert \mathrm{G}_1\vert &\lesssim_k \int_0^T \sum_{N\gg1}\sum_{\mathcal{N}_{1}}\omega_N^2\langle N\rangle^{2s}N_{k+2}(N_3....N_{k+1})^{1/2}\Vert P_{N_1}u(t',\cdot)\Vert_{L^2_x}\times 
\\ &\qquad \qquad \times  \Vert P_{N_2}u(t',\cdot)\Vert_{L^2_x} \Vert P_{N_{k+2}}(\Psi^m(t',\cdot))\Vert_{L^\infty_{x}} \prod_{i=3}^{k+1}\Vert  P_{N_i}u(t',\cdot)\Vert_{L^2_x}dt'
\\ &\lesssim_k \int_0^T \sum_{N\gg1}\sum_{\mathcal{N}_{1}}N_{k+2}^{-(0^+)}\Vert P_{N_1}u(t',\cdot)\Vert_{H^s_\omega} \Vert P_{N_2}u(t',\cdot)\Vert_{H^s_\omega}\times 
\\ &\qquad \qquad \times \Vert P_{N_{k+2}}(\Psi^m(t',\cdot))\Vert_{W^{1^+,\infty}_x} \prod_{i=3}^{k+1}\min\{N_i^{1/2},N_i^{-(0^+)}\}\Vert  P_{N_i}u(t',\cdot)\Vert_{H^{1/2^+}_x}dt'
\\ &  \lesssim_k T\Vert u\Vert_{L^\infty_TH^s_\omega}^{2}\Vert u\Vert_{L^\infty_TH^{1/2^+}_x}^{k-1}\Vert \Psi^m\Vert_{L^\infty_tW^{1^+,\infty}_x}.
\end{align*}
We point out that, in the estimates above, to sum over the indexes $N$, $N_1$ and $N_2$, we have used the fact that\footnote{Once again, this introduces a factor $k$ into the previous estimates. We stress that to avoid over repeated arguments, in the sequel we shall no longer point out these dependencies.} $N_1\in[ \tfrac{1}{2}N,4kN]$ and $N_2\in[\tfrac{1}{2k}N_1,N_1]$.

\medskip

\textbf{Step 3.2:} Now we seek to control the contribution of \eqref{def_int_step_7} in the region $\mathcal{N}_2.$ We aim to take advantage of classical Bourgain estimates. Similarly as in the previous steps, we begin using the decomposition given in \eqref{decomposition}, from where we infer that it is enough to control the following quantities \begin{align*}
\mathrm{G}_{2,R}^{\mathrm{high}}& :=\sum_{N\gg 1}\sum_{\mathcal{N}^2}\omega_N^2\langle N\rangle^{2s}\sup_{t\in(0,T)}\left\vert \int_{\R^2}\Pi_{\tilde{\mathbf{a}}_k}\big(\mathds{1}_{t,R}^{\mathrm{high}}P_{N_1}u,\mathds{1}_tP_{N_2}u,P_{N_3}u,...,P_{N_{k+1}}u\big)P_{N_{k+2}}\Psi\right\vert,
\\ \mathrm{G}_{2,R}^{\mathrm{low,high}}&:=\sum_{N\gg 1}\sum_{\mathcal{N}^2}\omega_N^2\langle N\rangle^{2s}\sup_{t\in(0,T)}\left\vert\int_{\R^2}\Pi_{\tilde{\mathbf{a}}_k}\big(\mathds{1}_{t,R}^{\mathrm{low}}P_{N_1}u,\mathds{1}_{t,R}^{\mathrm{high}}P_{N_2}u,P_{N_3}u,...,P_{N_{k+1}}u\big)P_{N_{k+2}}\Psi\right\vert,
\\ \mathrm{G}_{2,R}^{\mathrm{low,low}}&:=\sum_{N\gg 1}\sum_{\mathcal{N}^2}\omega_N^2\langle N\rangle^{2s}\sup_{t\in(0,T)}\left\vert \int_{\R^2}\Pi_{\tilde{\mathbf{a}}_k}\big(\mathds{1}_{t,R}^{\mathrm{low}}P_{N_1}u,\mathds{1}_{t,R}^{\mathrm{low}}P_{N_2}u,P_{N_3}u,...,P_{N_{k+1}}u\big)P_{N_{k+2}}\Psi\right\vert,
\end{align*}
where $R$ stands for a large real number to be fixed. We split the analysis into two steps. 

\medskip
 
\textbf{Step 3.2.1:} We start by  bounding $\mathrm{G}_{2,R}^{\mathrm{high}}$. We shall proceed in a similar fashion as in Step $2.1.1$. In fact, we define again $R(N,N_1,...,N_{k+2}):=N_1N_3$. Then, by using the first inequality in Lemma \ref{lem_high_low_1}, Lemma \ref{pseudo_holder_l2}, as well as Sobolev embedding, we obtain
\begin{align*}
\mathrm{G}_{2,R}^{\mathrm{high}}&\lesssim_k \sum_{N \gg 1}\sum_{\mathcal{N}^2}T^{1/4}\omega_N^2\langle N\rangle^{2s} \Vert \mathds{1}_{T,R}^{\mathrm{high}}\Vert_{L^{4/3}}\left\Vert\int_\R \Pi_{\tilde{\mathbf{a}}_k}\big(P_{N_1}u,...,P_{N_{k+1}}u\big)P_{N_{k+2}}\Psi\right\Vert_{L^\infty}
\\ & \lesssim_k \sum_{N \gg 1}\sum_{\mathcal{N}^2}T^{1/4}\omega_N^2\langle N\rangle^{2s}N_2^{-1/2} \big\Vert P_{N_1}u\big\Vert_{L^\infty_tL^2_x}\Vert P_{N_2}u\Vert_{L^\infty_tL^2_x}\Vert P_{N_{k+2}}\Psi\Vert_{L^\infty}\prod_{i=3}^{k+1} \Vert P_{N_i}u\Vert_{L^\infty_{t,x}}
\\ & \lesssim_k T^{1/4}\Vert u\Vert_{L^\infty_tH^s_\omega}^{2}\Vert u\Vert_{L^\infty_tH^{1/2^+}_x}^{k-1}\Vert \Psi\Vert_{L^\infty_TL^\infty_x}.
\end{align*}
To finish this first case, we point out that, thanks to the operator $\mathds{1}_{t,R}^\mathrm{high}$ acting on the factor $P_{N_2}u$, the same estimates also hold for $\mathrm{G}_{2,R}^{\mathrm{low,high}}$.

\medskip

\textbf{Step 3.2.2:} To conclude the proof of Step $3.2$ it only remains to consider $\mathcal{G}_{1,R}^{\mathrm{low,low}}$. As before, we begin by introducing some useful notation. We denote by $\mathcal{I}$ the functional given by
\begin{align*}
&\mathcal{I}\big(u_1,...,u_{k+2}):=\sum_{N\gg 1}\sum_{\mathcal{N}^2}\omega_N^2\langle N\rangle^{2s}\sup_{t\in(0,T)}\left\vert \int_{\R^2} \Pi_{\tilde{\mathbf{a}}_k}\big(u_1,...,u_{k+1}\big)u_{k+2}\right\vert.
\end{align*}
Now notice that, proceeding in the exact same fashion as in  \eqref{resonant_o_n1n3}, together with the fact that, in this case, $N_3\geq \max\{8^8kN_{k+2},2^9kN_4\}$, provides the relation \[
\big\vert\Omega_{k+1}(\xi_1,...,\xi_{k+2})\big\vert\sim N_1N_2N_3.
\]
Thus, in order to take advantage of the above relation, we decompose $\mathrm{G}_{2,R}^{\mathrm{low,low}}$ with respect to modulation variables in the following fashion
\begin{align*}
\big\vert\mathrm{G}_{2,R}^{\mathrm{low,low}}\big\vert &\leq \mathcal{I}\big(Q_{\gtrsim N^*}\mathds{1}_{t,R}^{\mathrm{low}}P_{N_1}u,\mathds{1}_{t,R}^{\mathrm{low}}P_{N_2}u,P_{N_3}u,...,P_{N_{k+1}}u,P_{N_{k+2}}\Psi\big)
\\ & \quad +\mathcal{I}\big(Q_{\ll N^*}\mathds{1}_{t,R}^{\mathrm{low}}P_{N_1}u,Q_{\gtrsim N^*}\mathds{1}_{t,R}^{\mathrm{low}}P_{N_2}u,P_{N_3}u,...,P_{N_{k+1}}u,P_{N_{k+2}}\Psi\big)
\\ & \quad +\mathcal{I}\big(Q_{\ll N^*}\mathds{1}_{t,R}^{\mathrm{low}}P_{N_1}u,Q_{\ll N^*}\mathds{1}_{t,R}^{\mathrm{low}}P_{N_2}u,Q_{\gtrsim N^*}P_{N_3}u,...,P_{N_{k+1}}u,P_{N_{k+2}}\Psi\big)
\\ & \quad +...
\\ & \quad +\mathcal{I}\big(Q_{\ll N^*}\mathds{1}_{t,R}^{\mathrm{low}}P_{N_1}u,Q_{\ll N^*}\mathds{1}_{t,R}^{\mathrm{low}}P_{N_2}u,Q_{\ll N^*}P_{N_3}u,...,Q_{\gtrsim N^*}P_{N_{k+1}}u,P_{N_{k+2}}\Psi\big) 
\\ & \quad +\mathcal{I}\big(Q_{\ll N^*}\mathds{1}_{t,R}^{\mathrm{low}}P_{N_1}u,Q_{\ll N^*}\mathds{1}_{t,R}^{\mathrm{low}}P_{N_2}u,Q_{\ll N^*}P_{N_3}u,...,Q_{\ll N^*}P_{N_{k+1}}u,P_{N_{k+2}}\Psi\big)
\\ & =: \mathcal{I}_1+...+\mathcal{I}_{k+2},
\end{align*}
where once again we are denoting by $N^*:=N_1N_2N_3$. At this point it is important to notice that, since in this case we have $N_2\geq \tfrac{1}{8}N\gg1$, then we must also have $N^*\gg N_1N_3=R$, what allows us to use the last inequality in Lemma \ref{lem_high_low_1}. Thus, bounding in a similar fashion as before, by using H\"older and Bernstein inequalities, as well as Lemma \ref{lem_high_low_1}, Lemma \ref{pseudo_holder_l2} and classical Bourgain estimates, we obtain
\begin{align*}
\mathcal{I}_{1}&\lesssim_k \sum_{N\gg 1}\sum_{\mathcal{N}^2}\omega_N^2\langle N\rangle^{2s}N_3\big\Vert Q_{\gtrsim N^*}\mathds{1}_{T,R}^{\mathrm{low}}P_{N_1}u\big\Vert_{L^2_tL^2_x}\Vert \mathds{1}_{T,R}^{\mathrm{low}}P_{N_2}u\Vert_{L^2_tL^2_x}\times
\\ & \qquad \qquad \times \Vert P_{N_{k+2}}(\Psi^m)\Vert_{L^\infty_{t,x}}\prod_{i=3}^{k+1}\Vert P_{N_i}u\Vert_{L^\infty_{t,x}}
\\ & \lesssim_k \sum_{N\gg 1}\sum_{\mathcal{N}^2}N_2^{(-1)^+}\big\Vert Q_{\gtrsim N^*}\mathds{1}_{T,R}^{\mathrm{low}}P_{N_1}u\big\Vert_{X^{s-1,1}}\Vert \mathds{1}_{T,R}^{\mathrm{low}}\Vert_{L^2}\Vert P_{N_2}u\Vert_{L^\infty_tH^s_x}\times 
\\ & \qquad \qquad \times \Vert P_{N_{k+2}}(\Psi^m)\Vert_{L^\infty_{t,x}} \prod_{i=3}^{k+1}\min\{N_i^{1/2},N_i^{-(0^+)}\}\Vert P_{N_i}u\Vert_{L^\infty_tH^{1/2^+}_x}
\\ & \lesssim_k T^{1/2}\Vert u\Vert_{X^{s-1,1}}\Vert u\Vert_{L^\infty_tH^s_x}\Vert u\Vert_{L^\infty_tH^{1/2^+}_x}^{k-1}\Vert\Psi\Vert_{L^\infty_{t,x}}^m.
\end{align*}
It is not difficult to see that, by following the same lines (up to trivial modifications), we can also bound $\mathcal{I}_2$, from where we obtain the same bound. On the other hand, to control $\mathcal{I}_3$ we use again both Lemma \ref{lem_high_low_1} and \ref{pseudo_holder_l2}, as well as H\"older and Bernstein inequalities, from where we obtain   \begin{align*}
\mathcal{I}_{3}&\lesssim_k \sum_{N\gg 1}\sum_{\mathcal{N}^2}\omega_N^2\langle N\rangle^{2s}N_3\big\Vert Q_{\ll N^*}\mathds{1}_{T,R}^{\mathrm{low}}P_{N_1}u\big\Vert_{L^2_tL^2_x}\Vert Q_{\ll N^*}\mathds{1}_{T,R}^{\mathrm{low}}P_{N_2}u\Vert_{L^\infty_tL^2_x}\times
\\ & \qquad \quad  \times \Vert Q_{\gtrsim N^*}P_{N_3}u \Vert_{L^2_tL^\infty_x}\Vert P_{N_{k+2}}(\Psi^m)\Vert_{L^\infty_{t,x}}\prod_{i=4}^{k+1}\Vert P_{N_i}u\Vert_{L^\infty_tL^\infty_x}
\\ & \lesssim_k \sum_{N\gg 1}\sum_{\mathcal{N}^2}N_2^{(-1)^+}\Vert \mathds{1}_{T,R}^{\mathrm{low}}\Vert_{L^2}\big\Vert P_{N_1}u\big\Vert_{L^\infty_tH^s_x}\Vert P_{N_2}u\Vert_{L^\infty_tH^s_x}\min\{N_3^{1^-}N_1^{-1},N_3^{-(0^+)}\}\times 
\\ & \qquad \quad \times \Vert Q_{\gtrsim N^*}P_{N_3}u\Vert_{X^{(-1/2)^+,1}}\Vert P_{N_{k+2}}(\Psi^m)\Vert_{L^\infty_{t,x}}\prod_{i=4}^{k+1}\min\{N_i^{1/2},N_i^{-(0^+)}\}\Vert P_{N_i}u\Vert_{L^\infty_tH^{1/2^+}_x}
\\ & \lesssim_k T^{1/2}\Vert u\Vert_{L^\infty_tH^s_x}^2\Vert u\Vert_{X^{(-1/2)^+,1}}\Vert u\Vert_{L^\infty_tH^{1/2^+}_x}^{k-2}\Vert \Psi\Vert_{L^\infty_{t,x}}^m.
\end{align*}
Notice that all the remaining cases $\mathcal{I}_i$, $i=4,...,k+1$, follow very similar lines to the latter case (up to trivial modifications), and hence we omit them. Finally, to control $\mathcal{I}_{k+2}$ notice that, since all factors $P_{N_i}u$ have an operator $Q_{\ll N^*}$ in front of them, then the factor $P_{N_{k+2}}(\Psi^m)$ is forced to be resonant, and hence in this case we can write $Q_{\gtrsim N^*}P_{N_{k+2}}\Psi=P_{N_{k+2}}(\Psi^m)$, otherwise $\mathcal{I}_{k+2}=0$ thanks to Lemma \ref{resonant_bourgain}. Moreover, notice also that, in the region $\mathcal{N}^2$ we have in particular that $N_1N_2N_3\gg N_{k+2}^3$, and hence we infer that $\vert \tau_{k+2}-\xi_{k+2}^3\vert\sim \vert\tau_{k+2}\vert$, and hence, we actually have $P_{N_{k+2}}(\Psi^m)=R_{\gtrsim N^*}P_{N_{k+2}}(\Psi^m)$. Therefore, by using Lemmas \ref{lem_high_low_1},  \ref{pseudo_holder_l2} as well as Bernstein inequality and the above properties, we obtain 
\begin{align*}
\mathcal{I}_{k+2}& \lesssim_k \sum_{N\gg 1}\sum_{\mathcal{N}^2}(N_1N_2)^{(-1)^+}\Vert \mathds{1}_{T,R}^{\mathrm{low}}\Vert_{L^2}^2\Vert P_{N_1}u\Vert_{L^\infty_tH^s_x}\Vert P_{N_2}u\Vert_{L^\infty_tH^s_x}\times
\\ & \qquad \times \Vert \partial_t R_{\gtrsim N^*}P_{N_{k+2}}(\Psi^m)\Vert_{L^\infty_{t,x}}\prod_{i=3}^{k+1}\min\{N_i^{1/2},N_i^{-(0^+)}\}\Vert P_{N_i}u\Vert_{L^\infty_{t}H^{1/2^+}_x}
\\ & \lesssim_k T\Vert u\Vert_{L^\infty_tH^s_x}^2\Vert u\Vert_{L^\infty_tH^{1/2^+}_x}^{k-1}\Vert\partial_t\Psi\Vert_{L^\infty_{t,x}}\Vert \Psi\Vert_{L^\infty_{t,x}}^{m-1}.
\end{align*}

\medskip

\textbf{Step 3.3:} To finish this step it only remains to consider $\mathrm{G_3}$. In this case it is enough to proceed in the same fashion as in Steps $2.2$ and $2.3$. In fact, noticing that, since \[
N_3\geq 8^8(k+1)N_{k+2}, \quad \hbox{then we have} \quad N_2\in[\tfrac{1}{2k}N_1,N_1].
\]
Therefore, by using H\"older and Bernstein inequalities, as well as Lemma \ref{pseudo_holder_l2}, we get that \begin{align*}
\vert\mathrm{G}_3\vert& \lesssim_k \int_0^T\sum_{N\gg 1}\sum_{\mathcal{N}^3}\langle N\rangle^{2s}(N_1N_2)^{-s}\min\{N_3,N_3^{-(0^+)}\}\Vert P_{N_1}u(t',\cdot)\Vert_{H^s_\omega}\times
\\ & \qquad \quad \times \Vert P_{N_2}u(t',\cdot)\Vert_{H^s_\omega}\Vert J_x^{1/2^+}P_{N_3}u(t',\cdot)\Vert_{L^\infty_x}\Vert J_x^{1/2^+}P_{N_4}u(t',\cdot)\Vert_{L^\infty_x}\times 
\\ & \qquad \quad \times \Vert P_{N_{k+2}}(\Psi^m(t',\cdot))\Vert_{L^\infty_{x}}\prod_{i=5}^{k+1}\min\big\{N_i^{1/2},N_i^{-(0^+)}\big\}\Vert P_{N_i}u(t',\cdot)\Vert_{H^{1/2^+}_x}dt'
\\ & \lesssim_k \Vert u\Vert_{L^\infty_TH^s_\omega}^{2}\big\Vert J_x^{1/2^+}u\big\Vert_{L^2_TL^\infty_x}^2 \Vert u\Vert_{L^\infty_TH^{1/2^+}_x}^{k-3}\Vert \Psi\Vert_{L^\infty_{t,x}}^m.
\end{align*}
We emphasize that, to sum over the indexes $N$, $N_1$, $N_2$ and $N_3$ in the case $N_3\ll N_2$, we have used the fact that $\Vert P_{N_1}u(s,\cdot)\Vert_{H^s}$ and $\Vert P_{N_2}u(s,\cdot)\Vert_{H^s}$ are both square summable, as well as the factor $N_3^{-}$, as in the proof of Steps $2.2$ and $2.3$. The proof of Step $3$ is complete.

\medskip

Now we explain how we control the contribution of the $L^2_TL^\infty_x$ terms. With this aim we use the Strichartz estimate \eqref{smoothing_eff} with $\delta=1$, from where we obtain \begin{align*}
\big\Vert J_x^{1/2^+}u\Vert_{L^2_TL^\infty_x}&\lesssim T^{1/4}\Vert u\Vert_{L^\infty_TH^{1/2^+}_x}+T^{3/4}\Vert \partial_t\Psi+\partial_x^3\Psi+\partial_xf(\Psi)\Vert_{L^\infty_TH^{-1/2^+}_x}
\\ & \quad + T^{3/4}\Vert u\Vert_{L^\infty_TH^{1/2^+}_x}\sum_{k=1}^\infty kc^k\vert a_k\vert \big(\Vert u\Vert_{L^\infty_TH^{1/2^+}_x}+\Vert\Psi\Vert_{L^\infty_TW^{1/2^+,\infty}_x}\big)^{k-1}.
\end{align*}
Gathering all the above estimate, and then using Lemma \ref{mst_basic_lemma}, we conclude the proof of the proposition.
\end{proof}

\smallskip 

\subsection{A priori estimates for the difference of two solutions}

In this subsection we seek to establish the key a priori estimate at the regularity level $s-1$ for the difference of two solutions. In the sequel we explicitly consider $\omega_N= 1$ for all $N\in\mathbb{D}$, and hence $H^s_\omega(\R)=H^s(\R)$.

\begin{prop}\label{prop_diff_sol}
Let $s>1/2$ and $T\in(0,2)$ both fixed. Consider $u,v\in L^\infty((0,T),H^s(\R))$ being two solutions to equation \eqref{gkdv_v} associated with an initial data $u_0,v_0\in H^s(\R)$. Then, the following inequality holds:
\begin{align*}
\Vert u-v\Vert_{L^\infty_TH^{s-1}_x}^2&\lesssim \Vert u_0-v_0\Vert_{H^{s-1}}^2 
\\ & \quad +T^{1/4}\Vert u-v\Vert_{L^\infty_TH^{s-1}_x}^2\mathcal{Q}^*\big(\Vert u\Vert_{L^\infty_TH^s_x},\Vert v\Vert_{L^\infty_TH^s_x},\Vert \Psi\Vert_{L^\infty_tW^{s+1,\infty}_x},\Vert \partial_t\Psi\Vert_{L^\infty_{t,x}}\big),
\end{align*}
where $\mathcal{Q}^*:\R^4\to\R_+$ is a smooth function.
\end{prop}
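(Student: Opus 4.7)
The plan is to mirror the proof of Proposition~\ref{prop_energy_est_impro}, applying the energy method at the lower regularity level $s-1$ to the difference $w:=u-v$. Subtracting the two copies of \eqref{gkdv_v} cancels the $\Psi$-dependent forcing, so that $w$ solves the purely nonlinear equation
\[
\partial_t w + \partial_x^3 w + \partial_x\bigl(f(u+\Psi)-f(v+\Psi)\bigr) = 0,
\]
which explains why the norm $\Vert\partial_t\Psi+\partial_x^3\Psi+\partial_xf(\Psi)\Vert$ does not enter $\mathcal{Q}^*$. After extending $u,v$ (hence $w$) to all of $\R^2$ via $\rho_T$, I would apply $P_N$, pair against $P_N w$ in $L^2_x$, multiply by $\langle N\rangle^{2(s-1)}$, integrate over $(0,t)\subset(0,T)$ and sum dyadically, reducing the problem to controlling
\[
\sum_N \langle N\rangle^{2(s-1)} \sup_{t\in(0,T)} \Bigl|\int_0^t\!\!\int_\R \partial_x P_N\bigl(f(u+\Psi)-f(v+\Psi)\bigr)P_N w\,dx\,dt'\Bigr|.
\]

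Next, I would exploit the Taylor factorization
\[
(u+\Psi)^k-(v+\Psi)^k \;=\; w\,\sum_{i=1}^{k}\binom{k}{i}\Psi^{k-i}\sum_{j=0}^{i-1} u^j v^{i-1-j},
\]
which, by the analyticity hypothesis \eqref{hyp_f_nonl}, turns the above quantity into a convergent series of $(k+2)$-linear forms in which $w$ appears exactly twice (the outer $P_Nw$ and the factored one) and the remaining entries are powers of $u$, $v$ and $\Psi$. I would then symmetrize the multiplier $i\phi_N^2(\xi_2)\xi_2$ only over the two $w$-positions, replacing it by
\[
\tilde{\mathbf{b}}_N(\xi_1,\ldots,\xi_{k+2}) = \tfrac{i}{2}\bigl(\phi_N^2(\xi_1)\xi_1+\phi_N^2(\xi_2)\xi_2\bigr),
\]
which on $\Gamma^{k+2}$ and on the joint support of $\phi_N^2(\xi_1)\phi_N^2(\xi_2)$ can be rewritten, modulo lower-order pieces handled by Lemma~\ref{pseudo_holder_l2}, as $-\tfrac{i}{2}\phi_N^2(\xi_1)(\xi_3+\cdots+\xi_{k+2})$. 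This is the key derivative gain that will compensate for working one derivative below the $H^s$ threshold.

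From this point I would run the frequency-region analysis of Steps~2 and~3 of the proof of Proposition~\ref{prop_energy_est_impro} essentially verbatim. Letting $N_1\ge N_2$ denote the two $w$-frequencies and $M_1\ge M_2\ge\cdots$ those of the $u,v,\Psi$ factors, the sub-regions to treat are: (a)~$N_1\sim N_2\sim N$ dominates all $M_j$, handled by the Bourgain--modulation decomposition of Lemmas~\ref{lem_high_low_1}, \ref{resonant_bourgain}, \ref{pseudo_holder_l2} combined with the refined Strichartz bound of Lemma~\ref{lem_smoothing}, in direct parallel with Steps~2.1 and~3.2; (b)~the dominant frequency $M_1$ lies on a $u$ or $v$ factor, in which case $\Gamma^{k+2}$ forces $M_1\sim N_1$ and one uses the derivative gain of $\tilde{\mathbf{b}}_N$ together with the extra $H^s$-regularity of $u,v$, as in Steps~2.2--2.3; (c)~$M_1$ lies on a $\Psi$ factor, handled via $\Psi\in L^\infty_tW^{s+1,\infty}_x$ exactly as in Step~3.1. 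The $L^2_TL^\infty_x$-norms of $w$ that arise are controlled by Lemma~\ref{lem_smoothing} with $\delta=1$ applied to $w$, whose Duhamel right-hand side is estimated by Taylor expansion and \eqref{hyp_f_nonl}; Lemma~\ref{mst_basic_lemma} finally converts Bourgain-norm bounds for $w$ into the desired $L^\infty_TH^{s-1}_x$ bounds.

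The main obstacle is case~(b): since $w$ only lies in $H^{s-1}$ while $u,v$ lie in $H^s$, whenever the dominant frequency sits on a $u$ or $v$ factor, the full one-derivative deficit on $w$ must be paid by the symmetrized symbol $\tilde{\mathbf{b}}_N$ via the identity $\xi_1+\xi_2=-(\xi_3+\cdots+\xi_{k+2})$, and nothing is left to spare. Tracking this cancellation in every dyadic sub-region while simultaneously ensuring that the $k$-dependent constants produced at each step grow only like $c^k$, so that the series over $k$ converges thanks to $\limsup|a_k|^{1/k}=0$, is the delicate bookkeeping. Once this is carried out, the resulting bound matches the stated inequality with $\mathcal{Q}^*$ a smooth function of $\Vert u\Vert_{L^\infty_TH^s_x}$, $\Vert v\Vert_{L^\infty_TH^s_x}$, $\Vert\Psi\Vert_{L^\infty_tW^{s+1,\infty}_x}$ and $\Vert\partial_t\Psi\Vert_{L^\infty_{t,x}}$.
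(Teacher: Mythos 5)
Your proposal follows essentially the same route as the paper's own proof of Proposition~\ref{prop_diff_sol}: subtract the two equations so the $\Psi$-forcing cancels, extend by $\rho_T$, perform the $H^{s-1}$ energy estimate on $w=u-v$, Taylor-expand $f(u+\Psi)-f(v+\Psi)$ to factor out $w$, symmetrize the multiplier only over the two $w$-slots (your $\tilde{\mathbf{b}}_N$ is exactly the paper's $d_k$), and then run the same frequency/modulation case analysis with Lemmas~\ref{lem_high_low_1}, \ref{resonant_bourgain}, \ref{pseudo_holder_l2}, \ref{lem_smoothing} and \ref{mst_basic_lemma}. The only minor differences are presentational (you order the non-$w$ frequencies jointly rather than keeping the $\Psi$-frequency aside, and you do not flag the reduction to $s\in(1/2,1]$ that the paper uses to control factors like $\langle N\rangle^{s-1}\langle N_1\rangle^{1-s}$), but the structure and the key cancellation are identical.
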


\begin{proof}
As before, in order to take advantage of Bourgain spaces, we have to extend the functions $u$ and $v$ from $(0,T)$ to the whole line $\R$. Hence, by using the extension operator we take extensions $\tilde{u}:=\rho_T[u]$ and $\tilde{v}:=\rho_T[v]$, supported in $(-2,2)$. For the sake of notation, we drop the tilde in the sequel. On the other hand, we point out that in the sequel we assume that $s\in(1/2,1]$. The case $s>1$ is simpler and follows very similar arguments.

\medskip

Now, let us denote by $w:=u-v$. Then $w(t,x)$ satisfies the equation \begin{align}\label{eq_w}
\partial_tw+\partial_x\big(\partial_x^2w+f(u+\Psi)-f(v+\Psi)\big)=0.
\end{align}
Now, we proceed as in the previous Proposition, taking the frequency projector $P_N$ to \eqref{eq_w} with $N>0$ dyadic, then taking the $L^2_x$-scalar product of the resulting equation against $P_Nw$ and multiplying the result by $\langle N\rangle^{2s-2}$. Finally, integrating in time on $(0,t)$ for $0<t<T$, and then applying Berstein inequality we are lead to \[
\Vert P_Nw(t)\Vert_{H^{s-1}_x}^2\lesssim \Vert P_Nw_0\Vert_{H^{s-1}}^2+\langle N\rangle^{2s-2}\sup_{t\in (0,T)}\left\vert \int_0^t\int_\R P_N\big(f(u+\Psi)-f(v+\Psi)\big)\partial_xP_Nw\right\vert.
\]
As before, we split the analysis in several steps, each of which, devoted to different ranges of $(k,i)$. Notice that the philosophy behind the estimates below, is the same one from the proof of the last proposition. However, since in this case we have more (different) functions, we must have several more cases as well, since we cannot order all the frequencies appearing in $P_N(u^{k-i}v^{i-1}\Psi w)$, as we did in the previous proposition when there was only $u^k$.

\medskip 

As we shall see, the estimates above do not depend of how many $u^{k-i}$ or $v^{i-1}$ we have. Thus, to simplify the notation we shall write
\begin{align*}
\sum_{N>0}\langle N\rangle^{2s-2}\sup_{t\in(0,T)}\left\vert\int_0^t\int_\R P_N(z_3...z_{k}\Psi^mw)\partial_xP_Nw\right\vert,
\end{align*}
for some $k\geq 3$ and $m\geq 0$, where each $z_i$ denotes either $u$ or $v$ (not necessarily all being the same).

\medskip

\textbf{Step 1:} Let us start by considering the case where we only have products of $z_i$, that is, where no power of $\Psi$ is involved. In other words, we seek to bound  the following quantity
\begin{align}\label{dif_k2_i1}
\sum_{N>0}\langle N\rangle^{2s-2}\sup_{t\in(0,T)}\left\vert\int_0^t\int_\R P_N(z_3...z_{k}w)\partial_xP_Nw\right\vert,
\end{align}
where $k\geq 3$. Before going further, we emphasize once again that in the sequel we assume $s\in(1/2,1]$. Then, in the same fashion as in the previous proposition, we begin by symmetrizing the underlying symbol in \eqref{dif_k2_i1}, which allows us to  reduce the problem to study the symbol
\begin{align}\label{break_d12}
d_k(\xi_1,...,\xi_{k})&:=\dfrac{i}{2}\phi_N^2(\xi_{1})\xi_{1}+\dfrac{i}{2}\phi_N^2(\xi_{2})\xi_{2},
\end{align}
where $\xi_{1}$ and $\xi_{2}$ denote the frequencies of each of the occurrences of $w$ in \eqref{dif_k2_i1} respectively. Hence, by frequency decomposition, it is enough to control the following quantity \begin{align}\label{def_int_frequenc_diff}
&\sum_{N>0}\langle N\rangle^{2s-2}\sup_{t\in (0,T)}\left\vert\int_0^t\sum_{N_1,...,N_{k}}\int_{\Gamma^{k}} d_k(\xi_1,...,\xi_{k})\prod_{i=1}^2\phi_{N_{i}}(\xi_i)\hat{w}(\xi_i)\prod_{j=3}^{k}\phi_{N_{j}}(\xi_j)\hat{z}_{j}(\xi_j)\right\vert.
\end{align}
Notice that, by symmetry, we can always assume $N_1\geq N_2$ and $N_3\geq N_4=\max\{N_4,...,N_{k+1}\}$. Now, for the sake of simplicity, let us denote by $\mathrm{I}$ the following functional \[
\mathrm{I}(N_1,...,N_k,u_1,...,u_k):=\int_{\Gamma^{k}} d_k(\xi_1,...,\xi_{k})\phi_{N_{1}}(\xi_1)u_{1}(\xi_1)...\phi_{N_{k}}(\xi_k)u_{k}(\xi_k)d\Gamma^{k}.
\]
Then, with this notation at hand, we define the set of admissible indexes 
\begin{align}\label{def_neqzero_indexes}
\mathbf{N}_k:=\mathbb{D}^k\setminus\big\{(N_1,...,N_k)\in\mathbb{D}^k: \forall (u_1,...,u_k)\in H^1(\R)^k,\ \mathrm{I}(N_1,...,N_k,u_1,...,u_k)=0\big\}.
\end{align}
Before going further let us rule out right away the case $N_2\lesssim1$. In fact, denoting by $\mathbf{N}_k^2:=\mathbf{N}_k\cap \{N_2<8^8k\}$, then, from Lemma \ref{pseudo_holder_l2}, Plancherel Theorem and H\"older and Bernstein inequalities we obtain
\begin{align*}
&\sum_{N>0}\langle N\rangle^{2s-2}\sup_{t\in (0,T)}\left\vert\int_0^t\sum_{\mathbf{N}_k^2}\int_{\Gamma^{k}} d_k(\xi_1,...,\xi_{k})\prod_{i=1}^2\phi_{N_{i}}(\xi_i)\hat{w}(t',\xi_i)\prod_{j=3}^{k}\phi_{N_{j}}(\xi_j)\hat{z}_{j}(t',\xi_j)\right\vert
\\ & \qquad \lesssim_k \int_0^T\sum_{N>0}\sum_{\mathbf{N}_k^2}\langle N\rangle^{2s-2}\left\vert\int_{\Gamma^k}d_k(\xi_1,...,\xi_{k})\prod_{i=1}^2\phi_{N_{i}}(\xi_i)\hat{w}(t',\xi_i)\prod_{j=3}^{k}\phi_{N_{j}}(\xi_j)\hat{z}_{j}(t',\xi_j)\right\vert
\\ & \qquad \lesssim_k \int_0^T\sum_{N>0}\sum_{\mathbf{N}_k^2}\langle N\rangle^{2s-2}\min\{N,N_3\}\Vert P_{N_1}w_1(t',\cdot)\Vert_{L^2_x}\times 
\\ & \qquad \qquad \times \Vert P_{N_2}w_2(t',\cdot)\Vert_{L^\infty_x}\Vert P_{N_3}z_3(t',\cdot)\Vert_{L^2_x}\prod_{j=4}^k\Vert P_{N_j}z_j(t',\cdot)\Vert_{L^\infty_x}dt'
\\ & \qquad \lesssim_k \int_0^T\sum_{N>0}\sum_{\mathbf{N}_k^2}\langle N\rangle^{2s-2}\langle N_1\rangle^{1-s}\langle N_3\rangle^{-s}N_2^{1/2}\min\{N,N_3\}\Vert P_{N_1}w_1(t',\cdot)\Vert_{H^{s-1}_x}\times 
\\ & \qquad \qquad \times \Vert P_{N_2}w_2(t',\cdot)\Vert_{H^{s-1}_x}\Vert P_{N_3}z_3(t',\cdot)\Vert_{H^{s}_x}\prod_{j=4}^k\min\{N_j^{1/2},N_j^{-}\}\Vert P_{N_j}z_j(t',\cdot)\Vert_{H^{1/2^+}_x}dt'  
\\ & \qquad \lesssim_k T\Vert w\Vert_{L^\infty_TH^{s-1}_x}^2\Vert z_3\Vert_{L^\infty_TH^s_x}\prod_{i=4}^k\Vert z_i\Vert_{L^\infty_TH^{1/2^+}_x}.
\end{align*}
Here, we have used the fact that, either $N\lesssim 1$, and then there is nothing to proof, or $N\gg 1$ and then, roughly speaking, we have $\{N_1\sim N \hbox{ and } N_3\gtrsim N_1\}$. In fact, if $N\geq 8^9k$, then, thanks to both facts, the explicit form of the symbol $d_k$ and the definition of $\Gamma^k$, we must have that $N_1\in[\tfrac{1}{2}N,2N]$ and $N_3\geq \tfrac{1}{2k}N_1$. Besides, in the case $N_4\ll N_3$ we have used the fact that $\Vert P_{N_1}w(t,\cdot)\Vert_{H^{s-1}_x}$ and $\Vert P_{N_3}z_3(t,\cdot)\Vert_{H^{s}_x}$ are both square summable. Notice lastly that, in particular, the previous computations allows us to rules out the case $N\leq 8^7k$.

\medskip

Now, in order to deal with the remaining region, we split the analysis into three cases, namely, 
\begin{align}\label{step_1_sets}
\mathbf{N}^1&:=\big\{(N_1,...,N_{k})\in \mathbb{D}^{k}: \, N_2\geq 8^8k,\, N_3< 2^9kN_4\big\}\cap \mathbf{N}_k,\nonumber
\\ \mathbf{N}^2&:=\big\{(N_1,...,N_{k})\in \mathbb{D}^{k}: \,N_2\geq 8^8k,\, N_3\geq 2^9kN_4,\,N_2< 2^9kN_4\big\}\cap \mathbf{N}_k
%
%
%
%\\ & \qquad \hbox{Stricharz entre }N_2\wedge N_4: \ 
%
%
%
\\ \mathbf{N}^3&:=\big\{(N_1,...,N_{k})\in \mathbb{D}^{k}: \, N_2\geq 8^8k,\, N_3\geq 2^9kN_4,\,N_2\geq 2^9kN_4\big\}\cap \mathbf{N}_k.\nonumber
%
%
%
%\\ & \qquad \hbox{Bourgain}.
\end{align}
We denote the contribution of \eqref{dif_k2_i1} associated with each of these regions by $\mathrm{D}_1, \mathrm{D}_2,\mathrm{D}_3$, respectively. Notice that the case $k=3$ shall follow directly from the bound exposed for $\mathbf{N}^3$, while $\mathbf{N}^1$ and $\mathbf{N}^2$ only concern the cases $k\geq 4$.

\medskip

\textbf{Step 1.1:} Let us begin by considering the contribution of \eqref{def_int_frequenc_diff} associated with $\mathbf{N}^1$. We recall once again that $N>8^7k$. In fact, in this case we can proceed directly from Lemma \ref{pseudo_holder_l2}, Plancherel Theorem as well as H\"older and Bernstein inequalites, from where we obtain \begin{align*}
\mathrm{D}_1&\lesssim_k \int_0^T\sum_{N\gg 1}\sum_{\mathbf{N}^1}\langle N\rangle^{2s-2}\left\vert\int_{\Gamma^k}d_k(\xi_1,...,\xi_k)\prod_{i=1}^2\phi_{N_i}(\xi_i)\hat{w}(t',\xi_i)\prod_{j=3}^k\phi_{N_j}(\xi_j)\hat{z}_j(t',\xi_j)\right\vert dt'
\\ & \lesssim_k \int_0^T\sum_{N\gg 1}\sum_{\mathbf{N}^1}\langle N\rangle^{2s-2}\min\{N,N_3\}\Vert P_{N_1}w(t',\cdot)\Vert_{L^2_x}\Vert P_{N_2}w(t',\cdot)\Vert_{L^2_x}\times
\\ & \qquad \qquad \times \Vert P_{N_3}z_3(t',\cdot)\Vert_{L^\infty_x}\Vert P_{N_4}z_4(t',\cdot)\Vert_{L^\infty_x}\prod_{j=5}^k\Vert P_{N_j}z_j(t',\cdot)\Vert_{L^\infty_x}dt'
\\ & \lesssim_k \int_0^T\sum_{N\gg 1}\sum_{\mathbf{N}^1}\langle N\rangle^{s-1}\langle N_1\rangle^{1-s}\min\big\{N,N_3\big\}\Vert P_{N_1}w(t',\cdot)\Vert_{H^{s-1}_x}\Vert P_{N_2}w(t',\cdot)\Vert_{H^{s-1}_x}\times
\\ & \qquad \qquad \times \langle N_3\rangle^{-(1^+)} \Vert J_x^{1/2^+}P_{N_3}z_3(t',\cdot)\Vert_{L^\infty_x}\Vert J_x^{1/2^+}P_{N_4}z_4(t',\cdot)\Vert_{L^\infty_x}\prod_{j=5}^k\Vert P_{N_j}z_j(t',\cdot)\Vert_{L^\infty_x}dt'
\\ & \lesssim_k \Vert w\Vert_{L^\infty_TH^{s-1}_x}^2\big\Vert J_x^{1/2^+}z_3\Vert_{L^2_TL^\infty_x}\big\Vert J_x^{1/2^+}z_4\Vert_{L^2_TL^\infty_x}\prod_{j=5}^k\big\Vert z_j\Vert_{L^\infty_TH^{1/2^+}_x}.
\end{align*}
Here, we have used the fact that $s\in(1/2,1]$ so that $\langle N\rangle^{s-1}\langle N_2\rangle^{1-s}\lesssim 1$, and that, on $\mathbf{N}^1$, the following inequalities hold: \[
\langle N\rangle^s\langle N_1\rangle^{-s}\lesssim 1\quad \hbox{and}\quad \langle N\rangle^{-1}\langle N_1\rangle\min\{N,N_3\}\lesssim_k  N_3.
\]

\medskip

\textbf{Step 1.2:} Now we consider  the case of $\mathbf{N}^2$. Indeed, in a similar fashion as above, recalling that $s\in(1/2,1]$ and that $N>8^7k$, then, by using Lemma \ref{pseudo_holder_l2}, Plancherel Theorem as well as H\"older and Bernstein inequalites we infer that \begin{align*}
\mathrm{D}_2&\lesssim_k \int_0^T\sum_{N\gg 1}\sum_{\mathbf{N}^2}\langle N\rangle^{2s-2}\left\vert\int_{\Gamma^k}d_k(\xi_1,...,\xi_k)\prod_{i=1}^2\phi_{N_i}(\xi_i)\hat{w}(t',\xi_i)\prod_{j=3}^k\phi_{N_j}(\xi_j)\hat{z}_j(t',\xi_j)\right\vert dt'
\\ & \lesssim_k \int_0^T\sum_{N\gg 1}\sum_{\mathbf{N}^2}\langle N\rangle^{2s-2}\min\{N,N_3\}\Vert P_{N_1}w(t',\cdot)\Vert_{L^2_x}\Vert P_{N_2}w(t',\cdot)\Vert_{L^\infty_x}\times
\\ & \qquad \qquad \times \Vert P_{N_3}z_3(t',\cdot)\Vert_{L^2_x}\Vert P_{N_4}z_4(t',\cdot)\Vert_{L^\infty_x}\prod_{j=5}^k\Vert P_{N_j}z_j(t',\cdot)\Vert_{L^\infty_x}dt'
\\ & \lesssim_k \int_0^T\sum_{N\gg 1}\sum_{\mathbf{N}^2}\langle N\rangle^{2s-2}\langle N_1\rangle^{1-s}\langle N_3\rangle^{-s}N_2^{-}N_4^{-}\min\big\{N,N_3\big\}\Vert P_{N_1}w\Vert_{H^{s-1}_x}\times
\\ & \qquad \times \big\Vert J^{(-1/2)^+}_xP_{N_2}w\big\Vert_{L^\infty_x}\Vert P_{N_3}z_3\Vert_{H^s_x}\Vert J_x^{1/2^+}P_{N_4}z_4\Vert_{L^\infty_x}\prod_{j=5}^k\Vert P_{N_j}z_j\Vert_{L^\infty_x}dt'
\\ & \lesssim_k \Vert w\Vert_{L^\infty_TH^{s-1}_x}\big\Vert J_x^{(-1/2)^+}w\Vert_{L^2_TL^\infty_x}\big\Vert z_3\Vert_{L^\infty_TH^s_x}\big\Vert J_x^{1/2^+}z_4\big\Vert_{L^2_TL^\infty_x}\prod_{j=5}^k\big\Vert z_j\Vert_{L^\infty_TH^{1/2^+}_x}
\end{align*}
Here we have used that, due to our current hypotheses, we always have that $N_3\gtrsim N$. In fact, if $N_2\leq\tfrac{1}{16} N$, and since $N_4\ll N_3$, due to the explicit form of $d_{k}$ and the definition of $\Gamma^k$, we infer that   \[
N_1\in[\tfrac{1}{2}N,2N] \quad \hbox{and}\quad N_3\in[\tfrac{1}{4}N_1,4N_1].
\]
On the other hand, if $N_2\geq \tfrac{1}{8}N$, then due to the fact that $N_3\geq N_4\gtrsim N_2$, we obtain the desired relation.

\medskip

\textbf{Step 1.3:} Finally, we are ready to treat the remaining case in \eqref{step_1_sets}, that is, we now deal with the region $\mathbf{N}^3$. In order to do so, we begin using the decomposition given in \eqref{decomposition}, from where we infer that it is enough to control the following quantities \begin{align*}
\mathcal{D}_{3,R}^{\mathrm{high}}& :=\sum_{N\gg 1}\sum_{\textbf{N}^3}\langle N\rangle^{2s-2}\sup_{t\in(0,T)}\left\vert \int_{\R^2}\Pi_{d_k}\big(\mathds{1}_{t,R}^{\mathrm{high}}P_{N_1}w,\mathds{1}_tP_{N_2}w\big)P_{N_3}z_3...P_{N_{k}}z_k\right\vert,
\\ \mathcal{D}_{3,R}^{\mathrm{low,high}}&:=\sum_{N\gg 1}\sum_{\textbf{N}^3}\langle N\rangle^{2s-2}\sup_{t\in(0,T)}\left\vert\int_{\R^2}\Pi_{d_k}\big(\mathds{1}_{t,R}^{\mathrm{low}}P_{N_1}w,\mathds{1}_{t,R}^{\mathrm{high}}P_{N_2}w\big)P_{N_3}z_3...P_{N_{k}}z_k\right\vert,
\\ \mathcal{D}_{3,R}^{\mathrm{low,low}}&:=\sum_{N\gg 1}\sum_{\textbf{N}^3}\langle N\rangle^{2s-2}\sup_{t\in(0,T)}\left\vert \int_{\R^2}\Pi_{d_k}\big(\mathds{1}_{t,R}^{\mathrm{low}}P_{N_1}w,\mathds{1}_{t,R}^{\mathrm{low}}P_{N_2}w\big)P_{N_3}z_3...P_{N_k}z_k\right\vert,
\end{align*}
where $R$ stands for a large real number that shall be fixed later. For the sake of clarity we split the analysis into two steps. 

\medskip
 
\textbf{Step 1.3.1:} We begin by considering the case of $\mathcal{D}_{3,R}^{\mathrm{high}}$. Once again, the idea is to take advantage of the operator $\mathds{1}_{t,R}^{\mathrm{high}}$ by using Lemma \ref{lem_high_low_1}. In fact, thanks to our current hypothesis, we see that we can choose once again $R$ being equal to $R:=N_1N_3$. Moreover, due to the definition of $\Gamma^k$ again, since $\min\{N_2,N_3\}\geq 2^9kN_4$, we infer that $N_3\leq 8N_1$, and hence, either we have \begin{align}\label{possibilities_step13}
\big\{N_1\geq 16N \, \hbox{ and } \,  N_3\in[\tfrac{1}{2}N_1,2N_1]\big\} \quad \hbox{or}\quad \big\{N_1\in[\tfrac{1}{2}N,8N] \,\hbox{ and } \, N_3\leq 8N_1\big\}.
\end{align}
Therefore, we can then bound $\mathcal{G}_{3,R}^{\mathrm{high}}$ by using the first inequality in Lemma \ref{lem_high_low_1}, Lemmas  \ref{pseudo_holder_obvio_l2} and  \ref{pseudo_holder_l2}, as well as Sobolev's embedding in the following fashion
\begin{align*}
\mathcal{D}_{3,R}^{\mathrm{high}}&\lesssim_k \sum_{N \gg 1}\sum_{\textbf{N}^3}T^{1/4}\langle N\rangle^{2s-2} \Vert \mathds{1}_{T,R}^{\mathrm{high}}\Vert_{L^{4/3}}\left\Vert\int_\R \Pi_{d_k}\big(\mathds{1}_{t,R}^{\mathrm{high}}P_{N_1}w,\mathds{1}_tP_{N_2}w\big)P_{N_3}z_3...P_{N_{k}}z_k\right\Vert_{L^\infty_t}
\\ & \lesssim_k \sum_{N \gg 1}\sum_{\textbf{N}^3}T^{1/4}\langle N\rangle^{2s-2}\min\{N,N_3\}R^{-3/4}\times 
\\ & \qquad \qquad \times \Vert P_{N_1}w\Vert_{L^\infty_tL^2_x}\Vert P_{N_2}w\Vert_{L^\infty_tL^2_x}\prod_{j=3}^{k} N_j^{1/2}\Vert P_{N_j}z_j\Vert_{L^\infty_{t}L^2_x}
\\ & \lesssim_k \sum_{N \gg 1}\sum_{\textbf{N}^3}T^{1/4}\langle N\rangle^{s-1}\langle N_1\rangle^{1-s}(N_1N_3)^{-3/4}\min\{N,N_3\}\times
\\ & \qquad \qquad \times \Vert P_{N_1}u\Vert_{L^\infty_tH^{s-1}_x}\Vert P_{N_2}w\Vert_{L^\infty_tH^{s-1}_x}\prod_{j=3}^{k} \min\big\{N_j^{1/2},N_j^{-(0^+)}\big\}\Vert P_{N_j}z_j\Vert_{L^\infty_{t}H^{1/2^+}_x}
\\ & \lesssim_k T^{1/4}\Vert w\Vert_{L^\infty_tH^{s-1}_x}^{2}\prod_{j=3}^k\Vert z_j\Vert_{L^\infty_tH^{1/2^+}_x}.
\end{align*}
As before, notice that we have used the fact that $s\in(1/2,1]$ so that we have the following inequality $\langle N\rangle^{s-1}\langle N_2\rangle^{1-s}\lesssim 1$. To finish this first case, we point out that, thanks to the operator $\mathds{1}_{t,R}^\mathrm{high}$ acting on the factor $P_{N_2}w$, the same estimates also hold for $\mathcal{D}_{3,R}^{\mathrm{low,high}}$.

\medskip

\textbf{Step 1.3.2:} Now we consider the last term in the decomposition, that is, $\mathcal{D}_{3,R}^{\mathrm{low,low}}$. In fact, first of all, let us recall the notation introduced in the proof of the previous proposition (adapted to the current symbol)
\begin{align*}
&\mathcal{I}_k\big(u_1,...,u_{k}):=\sum_{N\gg 1}\sum_{\textbf{N}^3}\langle N\rangle^{2s-2}\sup_{t\in(0,T)}\left\vert \int_{\R^2} \Pi_{d_k}(u_1,u_{2})u_3...u_{k}\right\vert.
\end{align*}
Then, by using the the hypothesis $\min\{N_2,N_3\}\geq2^9kN_4$, following the same computations as in \eqref{resonant_o_n1n3}, we infer that the resonant relation satisfies \[
\big\vert\Omega_k(\xi_1,...,\xi_k)\big\vert\sim N_1N_2N_3.
\]
Thus, we are in a proper setting to take advantage of Bourgain spaces. In order to do so, we decompose $\mathcal{D}_{3,R}^{\mathrm{low,low}}$ with respect to modulation variables in the following fashion
\begin{align*}
\mathcal{D}_{3,R}^{\mathrm{low,low}} &\leq \mathcal{I}\big(Q_{\gtrsim N^*}\mathds{1}_{t,R}^{\mathrm{low}}P_{N_1}w,\mathds{1}_{t,R}^{\mathrm{low}}P_{N_2}w,P_{N_3}z_3,...,P_{N_{k}}z_k\big)
\\ & \quad +\mathcal{I}\big(Q_{\ll N^*}\mathds{1}_{t,R}^{\mathrm{low}}P_{N_1}w,Q_{\gtrsim N^*}\mathds{1}_{t,R}^{\mathrm{low}}P_{N_2}w,P_{N_3}z_3,...,P_{N_{k}}z_k\big)
\\ & \quad +\mathcal{I}\big(Q_{\ll N^*}\mathds{1}_{t,R}^{\mathrm{low}}P_{N_1}w,Q_{\ll N^*}\mathds{1}_{t,R}^{\mathrm{low}}P_{N_2}w,Q_{\gtrsim N^*}P_{N_3}z_3,...,P_{N_{k}}z_k\big) 
\\ & \quad +...+\mathcal{I}\big(Q_{\ll N^*}\mathds{1}_{t,R}^{\mathrm{low}}P_{N_1}w,Q_{\ll N^*}\mathds{1}_{t,R}^{\mathrm{low}}P_{N_2}w,Q_{\ll N^*}P_{N_3}z_3,...,Q_{\gtrsim N^*}P_{N_{k}}z_k\big)
\\ & =: \mathcal{I}_1+...+\mathcal{I}_{k},
\end{align*}
where $N^*$ stands for $N^*:=N_1N_2N_3$. At this point it is important to recall that, since $N_2\geq 8^9k$, then we also have that $N^*\gg N_1N_3=R$, what allows us to use the last inequality in Lemma \ref{lem_high_low_1}. Thus, bounding in a similar fashion as before, by using H\"older and Bernstein inequalities, as well as Lemmas \ref{lem_high_low_1}, \ref{pseudo_holder_obvio_l2} and \ref{pseudo_holder_l2}, and classical Bourgain estimates, we obtain
\begin{align*}
\mathcal{I}_{k}&\lesssim_k \sum_{N\gg 1}\sum_{\textbf{N}^3}\langle N\rangle^{2s-2}\min\{N,N_3\}\big\Vert Q_{\gtrsim N^*}\mathds{1}_{T,R}^{\mathrm{low}}P_{N_1}w\big\Vert_{L^2_tL^2_x}\times 
\\ & \qquad \qquad \times \Vert \mathds{1}_{T,R}^{\mathrm{low}}P_{N_2}w\Vert_{L^2_tL^2_x}\prod_{j=3}^{k}N_j^{1/2}\Vert P_{N_j}z_j\Vert_{L^\infty_{t}L^2_x}
\\ & \lesssim_k \sum_{N\gg 1}\sum_{\mathbf{N}^3}\langle N\rangle^{s-1}\langle N_1\rangle^{1-s}N_2^{-1}N_3^{-1}\min\{N,N_3\}\big\Vert Q_{\gtrsim N^*}\mathds{1}_{T,R}^{\mathrm{low}}P_{N_1}w\big\Vert_{X^{s-2,1}}\times 
\\ & \qquad \quad \times \Vert \mathds{1}_{T,R}^{\mathrm{low}}\Vert_{L^2}\Vert P_{N_2}w\Vert_{L^\infty_tH^{s-1}_x}\prod_{j=3}^{k+1}\min\{N_j^{1/2},N_j^{-(0^+)}\}\Vert P_{N_j}z_j\Vert_{L^\infty_tH^{1/2^+}_x}
\\ & \lesssim_k T^{1/2}\Vert w\Vert_{X^{s-2,1}}\Vert w\Vert_{L^\infty_tH^{s-1}_x}\prod_{j=3}^k\Vert z_j\Vert_{L^\infty_tH^{1/2^+}_x},
\end{align*}
where we have used again \eqref{possibilities_step13}, the fact that $N>8^7k$ and that $s\in(1/2,1]$, so that we have\footnote{We point out that, in order to avoid over-repeated sentences, in what follows we shall no longer emphasize that $s\in (1/2,1]$ and that $N>8^7k$.} $\langle N\rangle^{s-1}\langle N_2\rangle^{1-s}\lesssim 1$. Moreover, it is not difficult to see that, by following the same lines (up to trivial modifications), we can also bound $\mathcal{I}_2$, from where we obtain exactly the same bound. On the other hand, to control $\mathcal{I}_3$ we use again Lemmas \ref{lem_high_low_1}, \ref{pseudo_holder_obvio_l2} and \ref{pseudo_holder_l2}, as well as H\"older and Bernstein inequalities, from where we obtain   \begin{align*}
\mathcal{I}_{3}&\lesssim_k \sum_{N\gg 1}\sum_{\textbf{N}^3}\langle N\rangle^{2s-2}N_3^{1/2}\min\{N,N_3\}\big\Vert Q_{\ll N^*}\mathds{1}_{T,R}^{\mathrm{low}}P_{N_1}w\big\Vert_{L^2_tL^2_x}\times
\\ & \qquad \quad  \times \Vert Q_{\ll N^*}\mathds{1}_{T,R}^{\mathrm{low}}P_{N_2}w\Vert_{L^\infty_tL^2_x}\Vert Q_{\gtrsim N^*}P_{N_3}z_3 \Vert_{L^2_tL^2_x}\prod_{j=4}^{k}N_j^{1/2}\Vert P_{N_j}z_j\Vert_{L^\infty_tL^2_x}
\\ & \lesssim_k \sum_{N\gg 1}\sum_{\mathbf{N}^3}\langle N\rangle^{s-1}\langle N_1\rangle^{-s}\langle N_3\rangle^{1/2^-}N_2^{-1}N_3^{-1/2}\min\{N,N_3\}\Vert \mathds{1}_{T,R}^{\mathrm{low}}\Vert_{L^2}\big\Vert P_{N_1}w\big\Vert_{L^\infty_tH^{s-1}_x}\times 
\\ & \qquad \quad \times \Vert P_{N_2}w\Vert_{L^\infty_tH^{s-1}_x}\Vert Q_{\gtrsim N^*}P_{N_3}z_3\Vert_{X^{(-1/2)^+,1}}\prod_{j=4}^{k+1}\min\{N_j^{1/2},N_j^{-(0^+)}\}\Vert P_{N_j}z_j\Vert_{L^\infty_tH^{1/2^+}_x}
\\ & \lesssim_k T^{1/2}\Vert w\Vert_{L^\infty_tH^{s-1}_x}^2\Vert z_3\Vert_{X^{(-1/2)^+,1}}\prod_{j=4}^k\Vert z_j\Vert_{L^\infty_tH^{1/2^+}_x}.
\end{align*}
Notice that all the remaining cases $\mathcal{I}_i$, $i=4,...,k$, follow very similar lines to the latter case above (up to trivial modifications), and they provide exactly the same bound. Hence, in order to avoid over-repeated computations, we omit the proof of these cases. 

\medskip

\textbf{Step 2:} Now we seek to bound the case where we only have powers of $\Psi$. In particular, no $z_i$ is involved. More concretely, in this step we seek to study the following quantity \begin{align}\label{psideuxieme}
\sum_{N>0}\langle N\rangle^{2s-2}\sup_{t\in (0,T)}\left\vert\int_0^t\sum_{N_1,N_2,N_{3}}\int_\R \Pi_{d_3}\big(P_{N_{1}}w,P_{N_{2}}w\big)P_{N_3}(\Psi^m)\right\vert,
\end{align}
where $d_3(\xi_1,\xi_2,\xi_3)$ stands for the symbol given in \eqref{break_d12}, where $\xi_{1}$ and $\xi_{2}$ denote the frequencies of each of the occurrences of $w$ in \eqref{psideuxieme} respectively. 

\medskip

Now notice that, by symmetry, we can always assume $N_1\geq N_2$. Moreover, it is not difficult to see that, due to the additional constraint\footnote{We recall that, with this phrase we are referring to the condition $\xi_1+\xi_2+\xi_3=0$.} given by $\Gamma^3$, in this case we have that $\max\{N_2,N_3\}\geq \tfrac{1}{4}N_1$ and $N_3\leq 8N_1$, and hence, either we have $N_2\sim N_1$ or $N_3\sim N_1$, or both. By similar reasons, if $N_3\geq 16N$, then we must have $N_1\in[\tfrac{1}{4}N_3,4N_3]$, otherwise the inner integral in \eqref{psideuxieme} vanishes.

\medskip

On the other hand, following the same lines of the previous step, we can directly bound the case $\mathbf{N}_3^2:=\mathbf{N}_3\cap\{N_2\lesssim1\}$, where $\mathbf{N}_3$ is the set defined in \eqref{def_neqzero_indexes}. In fact, from Lemma \ref{pseudo_holder_l2}, Plancherel Theorem and H\"older inequality we obtain \begin{align*}
&\sum_{N>0}\langle N\rangle^{2s-2}\sup_{t\in(0,T)}\bigg\vert\int_0^t\sum_{\mathbf{N}_3^2}\int_{\Gamma^3}d(\xi_1,\xi_2,\xi_3)\phi_{N_1}(\xi_1)\hat{w}(\xi_1)\phi_{N_2}(\xi_2)\hat{w}(\xi_2)\phi_{N_3}(\xi_3)\widehat{\Psi^m}(\xi_3)\bigg\vert
\\ & \qquad \lesssim_k T\Vert w\Vert_{L^\infty_TH^{s-1}_x}^2\Vert \Psi^m\Vert_{L^\infty_TW^{1^+,\infty}_x},
\end{align*}
and hence in the sequel we can assume that $N_2\geq 8^8k$. Now, let us consider the region $\mathbf{N}_\gg:=\mathbf{N}_3\cap\{N_3\geq 16N\}$. Then, by using Plancherel Theorem and then H\"older and Bernstein inequalities we get  \begin{align*}
&\sum_{N\gg 1}\langle N\rangle^{2s-2}\sup_{t\in(0,T)}\left\vert\int_0^t\sum_{\mathbf{N}_\gg}\int_{\Gamma^3}d(\xi_1,\xi_2,\xi_3)\phi_{N_1}(\xi_1)\hat{w}(\xi_1)\phi_{N_2}(\xi_2)\hat{w}(\xi_2)\phi_{N_3}(\xi_3)\widehat{\Psi^m}(\xi_3)\right\vert
\\ & \qquad \lesssim_k \int_0^T\sum_{N\gg 1}\sum_{\mathbf{N}_\gg}\langle N_3\rangle^{-(0^+)}\Vert P_{N_1}w(t',\cdot)\Vert_{H^{s-1}_x}\Vert P_{N_2}w(t',\cdot)\Vert_{H^{s-1}_x}\Vert P_{N_3}(\Psi^m(t',\cdot))\Vert_{W^{1^+,\infty}_x}dt'
\\ & \qquad \lesssim_k T\Vert w\Vert_{L^\infty_TH^{s-1}_x}^2\Vert \Psi^m\Vert_{L^\infty_TW^{1^+,\infty}_x}.
\end{align*}
Hence, from now on we assume that $N_3\leq 8N$, which in turn forces $N_1\in[\tfrac{1}{4}N,32N]$ thanks to the additional constraint given by $\Gamma^3$.  Then, denoting this remaining region by $\mathbf{N}_\leq$, recalling that $\max\{N_2,N_3\}\geq \tfrac{1}{4}N_1$, we can bound the remaining portion of \eqref{psideuxieme} from Lemma \ref{pseudo_holder_l2} and Bernstein inequality as follows
\begin{align*}
&\sum_{N\gg 1}\langle N\rangle^{2s-2}\sup_{t\in(0,T)}\left\vert\int_0^t\sum_{\mathbf{N}_\leq}\int_{\Gamma^3}d(\xi_1,\xi_2,\xi_3)\phi_{N_1}(\xi_1)\hat{w}(\xi_1)\phi_{N_2}(\xi_2)\hat{w}(\xi_2)\phi_{N_3}(\xi_3)\widehat{\Psi^m}(\xi_3)\right\vert
\\ & \qquad \lesssim_k \int_0^T\sum_{N\gg 1}\sum_{\mathbf{N}_\leq}\min\{N_3,N_3^{-(0^+)}\}\Vert P_{N_1}w\Vert_{H^{s-1}_x}\Vert P_{N_2}w\Vert_{H^{s-1}_x}\Vert P_{N_3}(\Psi^m)\Vert_{W^{1^+,\infty}_x}
\\ & \qquad \lesssim_k T\Vert w\Vert_{L^\infty_TH^{s-1}_x}^2\Vert \Psi^m\Vert_{L^\infty_TW^{1^+,\infty}_x}
\end{align*}
where in this case, to sum over the region $N_3\ll N$, we have used the fact that $\Vert P_{N_1}w(t,\cdot)\Vert_{H^{s-1}_x}$ and $\Vert P_{N_2}w(t,\cdot)\Vert_{H^{s-1}_x}$ are both square summable.

\medskip

\textbf{Step 3:} Finally, it only remains to bound the ``crossed terms''. More specifically, in this step we aim to estimate the contribution of the following quantity
\begin{align}\label{def_int_troisieme_psi}
\sum_{N>0}\langle N\rangle^{2s-2}\sup_{t\in (0,T)}\left\vert\int_0^t\sum_{N_1,...,N_{k+1}}\int_\R \Pi_{d_{k+1}}\big(P_{N_{1}}w,P_{N_{2}}w\big)P_{N_3}z_3...P_{N_k}z_kP_{N_{k+1}}(\Psi^m)\right\vert,
\end{align}
where we assume $k\geq 3$ and $d_{k+1}$ is the symbol given in \eqref{break_d12}. We emphasize that, as in the previous proposition, we consider\footnote{Hence, when $N_{k+1}=1$ we consider $\eta(\xi_{k+1})$ instead of $\phi_{N_{k+1}}(\xi_{k+1})$ in \eqref{def_int_troisieme_psi}.} $N_{k+1}\in\mathbb{D}_{\mathrm{nh}}$. Now notice that, by symmetry, we can always assume that $N_1\geq N_2$ and $N_3\geq N_4=\max\{N_4,...,N_k\}$. Moreover, in contrast with Step $1$, in this case, by using either Lemma \ref{pseudo_holder_l2} or Plancherel Theorem together with H\"older inequality, the factor coming from the symbol $d_{k+1}$ shall be of order \[
\min\big\{N,N_{\max}\big\} \quad \hbox{instead of}\quad \min\{N,N_3\},
\]
as in the previous case, where we have adopted the notation $N_{\max}:=\max\{N_3,N_{k+1}\}$. On the other hand, due to the definition of $\Gamma^{k+1}$ we infer that,\[
\max\big\{N_2,N_3,N_{k+1}\big\}\geq \tfrac{1}{2k}N_1.
\]
In fact, more generally we have, \[
\max\big\{N_1,N_2,N_3,N_4,N_{k+1}\big\}\setminus\{\max\{N_1,N_3,N_{k+1}\}\}\geq \tfrac{1}{2k}\max\{N_1,N_3,N_{k+1}\}.
\]
Therefore, roughly speaking, the two largest frequencies are always equivalent (up to a factor depending on $k$). Now, let us start by ruling out the case $N_2< 9^9k$. Indeed, denoting by $\mathbf{N}_\lesssim^{k}:=\mathbf{N}_{k+1}\cap\{N_2\lesssim1\}$, from Lemma \ref{pseudo_holder_l2}, Plancherel Theorem and H\"older inequality we get
\begin{align*}
&\sum_{N>0}\langle N\rangle^{2s-2}\sup_{t\in (0,T)}\bigg\vert\int_0^t\sum_{\mathbf{N}_{\lesssim}^k}\int_\R \Pi_{d_{k+1}}\big(P_{N_{1}}w,P_{N_{2}}w\big)P_{N_3}z_3...P_{N_k}z_kP_{N_{k+1}}(\Psi^m)\bigg\vert
\\ & \quad \lesssim_k \int_0^T\sum_{N>0}\sum_{\mathbf{N}_{\lesssim}^k}\langle N\rangle^{2s-2}\langle N_1\rangle^{1-s}\langle N_3\rangle^{-s}N_2^{1/2}N_{k+1}^{-(1^+)}\min\{N,N_{\max}\}\Vert P_{N_1}w\Vert_{H^{s-1}_x}\times 
\\ & \quad \qquad \times \Vert P_{N_2}w\Vert_{H^{s-1}_x}\Vert P_{N_3}z_3\Vert_{H^s_x}\Vert P_{N_{k+1}}(\Psi^m)\Vert_{W^{1^+,\infty}_x}\prod_{j=4}^k\min\{N_j^{1/2},N_j^-\}\Vert P_{N_j}z_j\Vert_{H^{1/2^+}_x}dt'
\\ & \quad \lesssim_k T\Vert w\Vert_{L^\infty_TH^{s-1}_x}^2\Vert z_3\Vert_{L^\infty_TH^s_x}\Vert\Psi^m\Vert_{L^\infty_TW^{1^+,\infty}_x}\prod_{j=4}^k\Vert z_j\Vert_{L^\infty_TH^{1/2^+}_x},
\end{align*}
where we have used the fact that, if $N\gg1$, then $N_1\in[\tfrac{1}{2}N,2N]$, as well as the fact that $\Vert P_{N_1}w(t,\cdot)\Vert_{H^{s-1}_x}$ and $\Vert P_{N_3}z_3(t,\cdot)\Vert_{H^{s}_x}$ are both square summable. Once again, notice that the previous bound allows us to assume in the sequel that $N\geq9^8k$. Now, it is not difficult to see that, in the remaining region, we can further assume that $N_3\geq 8^8k$. In fact, let us assume that if $N_3< 8^8k$. Then, in this case, either we have $N_{k+1}\sim N_1$ or $\{N_{k+1}\ll N_1\hbox{ and }N_1\sim N_2\sim N\}$. Hence, denoting this region by $\mathbf{N}_{\leq}^{k,3}$, then, by using H\"older and Bernstein inequalities we have 
\begin{align*}
&\sum_{N>0}\langle N\rangle^{2s-2}\sup_{t\in (0,T)}\bigg\vert\int_0^t\sum_{\mathbf{N}_{\leq}^{k,3}}\int_\R \Pi_{d_{k+1}}\big(P_{N_{1}}w,P_{N_{2}}w\big)P_{N_3}z_3...P_{N_k}z_kP_{N_{k+1}}(\Psi^m)\bigg\vert
\\ & \qquad \quad  \lesssim_k \int_0^T\sum_{N>0}\sum_{\mathbf{N}_{\leq}^{k,3}}\langle N\rangle^{2s-2}\langle N_1\rangle^{1-s}\langle N_2\rangle^{1-s}N_{k+1}^{-(1^+)}\min\{N,N_{\max}\}\Vert P_{N_1}w\Vert_{H^{s-1}_x}\times 
\\ & \qquad \qquad \qquad \times \Vert P_{N_2}w\Vert_{H^{s-1}_x}\Vert P_{N_{k+1}}(\Psi^m)\Vert_{W^{1^+,\infty}_x}\prod_{j=3}^k\min\{N_j^{1/2},N_j^-\}\Vert P_{N_j}z_j\Vert_{H^{1/2^+}_x}dt'
\\ & \qquad \quad \lesssim_k T\Vert w\Vert_{L^\infty_TH^{s-1}_x}^2\Vert\Psi^m\Vert_{L^\infty_TW^{1^+,\infty}_x}\prod_{j=3}^k\Vert z_j\Vert_{L^\infty_TH^{1/2^+}_x},
\end{align*}
where, to sum over the region $\{N_{k+1}\ll N\hbox{ and }N_1\sim N_2\sim N\}$, we have used the fact that $\Vert P_{N_1}w(s,\cdot)\Vert_{H^{s-1}_x}$ and $\Vert P_{N_2}w(s,\cdot)\Vert_{H^{s-1}_x}$ are both square summable. Moreover, there is another important case that can be directly treated. Let us define the set \[
\mathbf{N}_\gg^{k,1}:=\mathbf{N}_{k+1}\cap\{N_2\geq 9^9k\}\cap\{N_3\geq 8^8k\}\cap\big\{2^9kN_{k+1}> \min\{N_1,N_3\}\big\}.
\]
The latter constraint implies, up to a factor $k$, that $\min\{N_1,N_3\}\lesssim_k N_{k+1}$. Then, proceeding in a similar fashion as above, noticing that $\min\{N,N_{\max}\}\lesssim_k\min\{N,N_{k+1}\} $, from H\"older and Bernstein inequalities we obtain
\begin{align*}
&\sum_{N\gg 1}\langle N\rangle^{2s-2}\sup_{t\in (0,T)}\bigg\vert\int_0^t\sum_{\mathbf{N}_{\gg}^{k,1}}\int_\R \Pi_{d_{k+1}}\big(P_{N_{1}}w,P_{N_{2}}w\big)P_{N_3}z_3...P_{N_k}z_kP_{N_{k+1}}(\Psi^m)\bigg\vert
\\ & \quad \lesssim_k \int_0^T\sum_{N\gg1}\sum_{\mathbf{N}_{\gg}^{k,1}} \langle N\rangle^{2s-2}\langle N_1\rangle^{1-s}\langle N_2\rangle^{1-s}N_3^{-(0^+)}N_{k+1}^{-(1^+)}\min\{N,N_{\max}\}\Vert P_{N_1}w\Vert_{H^{s-1}_x}\times 
\\ & \ \qquad \times \Vert P_{N_2}w\Vert_{H^{s-1}_x}\Vert P_{N_3}z_3\Vert_{H^{1/2^+}_x}\Vert P_{N_{k+1}}(\Psi^m)\Vert_{W^{1^+,\infty}_x}\prod_{j=4}^k\min\{N_j^{1/2},N_j^-\}\Vert P_{N_j}z_j\Vert_{H^{1/2^+}_x}dt'
\\ & \quad \lesssim_k T\Vert w\Vert_{L^\infty_TH^{s-1}_x}^2\Vert\Psi^m\Vert_{L^\infty_TW^{1^+,\infty}_x}\prod_{j=3}^k\Vert z_j\Vert_{L^\infty_TH^{1/2^+}_x}.
\end{align*}
Having dealt with the above cases, we can now easily deal with the region $2^9kN_{k+1}>N_2$. Indeed, denoting this region by $\mathbf{N}_\gg^{k,2}$, from H\"older and Bernstein inequalities we see that \begin{align*}
&\sum_{N\gg 1}\langle N\rangle^{2s-2}\sup_{t\in (0,T)}\bigg\vert\int_0^t\sum_{\mathbf{N}_{\gg}^{k,2}}\int_\R \Pi_{d_{k+1}}\big(P_{N_{1}}w,P_{N_{2}}w\big)P_{N_3}z_3...P_{N_k}z_kP_{N_{k+1}}(\Psi^m)\bigg\vert
\\ & \quad \lesssim_k \int_0^T\sum_{N\gg1}\sum_{\mathbf{N}_{\gg}^{k,2}} \langle N\rangle^{2s-2}\langle N_1\rangle^{1-s} \langle N_2\rangle^{1-s}N_2^{1/2}N_3^{-s}N_{k+1}^{-(1^+)}\min\{N,N_{3}\}\Vert P_{N_1}w\Vert_{H^{s-1}_x}\times 
\\ & \ \qquad \times \Vert P_{N_2}w\Vert_{H^{s-1}_x}\Vert P_{N_3}z_3\Vert_{H^{s}_x}\Vert P_{N_{k+1}}(\Psi^m)\Vert_{W^{1^+,\infty}_x}\prod_{j=4}^k\min\{N_j^{1/2},N_j^-\}\Vert P_{N_j}z_j\Vert_{H^{1/2^+}_x}dt'
\\ & \quad \lesssim_k T\Vert w\Vert_{L^\infty_TH^{s-1}_x}^2\Vert z_3\Vert_{L^\infty_TH^{s}_x}\Vert\Psi^m\Vert_{L^\infty_TW^{1^+,\infty}_x}\prod_{j=4}^k\Vert z_j\Vert_{L^\infty_TH^{1/2^+}_x}.
\end{align*}
Here, we have used the fact that $\Vert P_{N_1}w(s,\cdot)\Vert_{H^{s-1}_x}$ and $\Vert P_{N_3}z_3(s,\cdot)\Vert_{H^{s}_x}$ are both square summable, so that we are able to re-sum in the region $N_1\sim N_3\gg \max\{N_2,N_4,N_{k+1}\}$. Therefore, in the sequel we can assume that $\min\{N_1,N_2,N_3\}\geq 2^9kN_{k+1}$. This concludes all the straightforward cases. Now, in order to deal with the remaining region, we split the analysis into three cases, namely, 
\begin{align*}
\mathbf{N}^1_k&:=\big\{(N_1,...,N_{k+1})\in \mathbb{D}^{k}\times\mathbb{D}_{\mathrm{nh}}: \,N_2\geq 9^9k,\, N_3\geq 8^8k,\nonumber
\\ & \qquad \qquad \qquad \  \min\{N_1,N_2,N_3\}\geq 2^9kN_{k+1},\,N_3< 2^9kN_4\big\}\cap \mathbf{N}_{k+1},\nonumber
\\ \mathbf{N}^2_k&:=\big\{(N_1,...,N_{k+1})\in \mathbb{D}^{k}\times\mathbb{D}_{\mathrm{nh}}: \,N_2\geq 9^9k,\,N_3\geq 8^8k,
\\ & \qquad \qquad \qquad \  \min\{N_1,N_2,N_3\}\geq 2^9kN_{k+1},\, N_3\geq 2^9kN_4,\, N_2<2^9kN_4 \big\}\cap \mathbf{N}_{k+1},\nonumber 
%
%
%
%\\ & \qquad \hbox{Stricharz entre }N_2\wedge N_4: \ 
%
%
%
%
%
\\ \mathbf{N}^3_k&:=\big\{(N_1,...,N_{k+1})\in \mathbb{D}^{k}\times\mathbb{D}_{\mathrm{nh}}: \,N_2\geq 9^9k,\, N_3\geq 8^8k,\nonumber
\\ & \qquad \qquad \qquad \  \min\{N_1,N_2,N_3\}\geq 2^9k N_{k+1},\, N_3\geq 2^9kN_4,\,N_2\geq 2^9kN_4\big\}\cap \mathbf{N}_{k+1}.\nonumber
%
%
%
%
%\\ \mathbf{N}^4_k&:=\big\{(N_1,...,N_{k+1})\in \mathbb{D}^{k}\times\mathbb{D}_{\mathrm{nh}}: \,N_2\geq 8^9k,\, N_3\geq 1,\, N_1>8^8kN_{k+1},\nonumber
%\\ & \qquad \qquad \qquad \   N_3\geq 2^9kN_4,\,N_2\geq 2^9kN_4,\,\min\{N_2,N_3\}\geq2^9kN_{k+1}\big\}\cap \mathbf{N}_{k+1}.
%
%
%
%\\ & \qquad \hbox{Bourgain}.
\end{align*}
We denote by $\mathrm{D}_1, \mathrm{D}_2,\mathrm{D}_3$ the contribution of \eqref{def_int_troisieme_psi} associated with each of these regions, respectively. Notice that the case $k=3$ shall follow directly from the bound exposed for $\mathbf{N}^3_k$, while $\mathbf{N}^1_k$ and $\mathbf{N}^2_k$ only concern the cases $k\geq 4$.

\medskip

\textbf{Step 3.1:}  Let us begin by considering the case of $\mathbf{N}^1_k$. In fact, in this case, by using Lemma \ref{pseudo_holder_l2}, Plancherel Theorem as well as H\"older and Bernstein inequalities we get
\begin{align*}
\mathrm{D}_1&\lesssim_k \int_0^T\sum_{N\gg 1}\sum_{\mathbf{N}^1_k}\langle N\rangle^{2s-2}\left\vert\int_{\R}\Pi_{d_{k+1}}\big(P_{N_{1}}w,P_{N_{2}}w\big)P_{N_3}z_3...P_{N_k}z_kP_{N_{k+1}}(\Psi^m)\right\vert dt'
\\ & \lesssim_k \int_0^T\sum_{N\gg 1}\sum_{\mathbf{N}^1_k}\langle N\rangle^{2s-2}\langle N_1\rangle^{1-s}\langle N_2\rangle^{1-s}N_3^{-(1^+)}N_{k+1}^{-(1^+)}\min\big\{N,N_{3}\big\}\Vert P_{N_1}w\Vert_{H^{s-1}_x}\times
\\ & \quad \times \Vert P_{N_2}w\Vert_{H^{s-1}_x}\Vert J_x^{1/2^+}P_{N_3}z_3\Vert_{L^\infty_x}\Vert J_x^{1/2^+}P_{N_4}z_4\Vert_{L^\infty_x}\Vert P_{N_{k+1}}(\Psi^m)\Vert_{W^{1^+,\infty}_x}\prod_{j=5}^k\Vert P_{N_j}z_j\Vert_{L^\infty_x}
\\ & \lesssim_k \Vert w\Vert_{L^\infty_TH^{s-1}_x}^2\big\Vert J_x^{1/2^+}z_3\Vert_{L^2_TL^\infty_x}\big\Vert J_x^{1/2^+}z_4\Vert_{L^2_TL^\infty_x}\Vert\Psi^m\Vert_{W^{1^+,\infty}_x}\prod_{j=5}^k\big\Vert z_j\Vert_{L^\infty_TH^{1/2^+}_x},
\end{align*}
where, to sum over the region $\{N_{3}\ll N\hbox{ and }N_1\sim N_2\sim N\}$, we have used the fact that $\Vert P_{N_1}w(s,\cdot)\Vert_{H^{s-1}_x}$ and $\Vert P_{N_2}w(s,\cdot)\Vert_{H^{s-1}_x}$ are both square summable, while the case $N_3\gtrsim N$ follows directly thanks to the factor $N_3^-$.

\medskip

\textbf{Step 3.2:} Now we consider the contribution of \eqref{def_int_troisieme_psi} associated with $\mathbf{N}^2_k$. Indeed, proceeding similarly as above, by using Lemma \ref{pseudo_holder_l2}, Plancherel Theorem as well as H\"older and Bernstein inequalites we infer that \begin{align*}
\mathrm{D}_2&\lesssim_k \int_0^T\sum_{N\gg 1}\sum_{\mathbf{N}^2_k}\langle N\rangle^{2s-2}\left\vert\int_{\R}\Pi_{d_{k+1}}\big(P_{N_{1}}w,P_{N_{2}}w\big)P_{N_3}z_3...P_{N_k}z_kP_{N_{k+1}}(\Psi^m)\right\vert dt'
\\ & \lesssim_k \int_0^T\sum_{N\gg 1}\sum_{\mathbf{N}^2_k}\langle N\rangle^{2s-2}\min\{N,N_3\}\Vert P_{N_1}w(t',\cdot)\Vert_{L^2_x}\Vert P_{N_2}w(t',\cdot)\Vert_{L^\infty_x}\times
\\ & \quad \times \Vert P_{N_3}z_3(t',\cdot)\Vert_{L^2_x}\Vert P_{N_4}z_4(t',\cdot)\Vert_{L^\infty_x}\Vert P_{N_{k+1}}(\Psi^m)\Vert_{L^\infty_{t,x}}\prod_{j=5}^k\Vert P_{N_j}z_j(t',\cdot)\Vert_{L^\infty_x}dt'
\\ & \lesssim_k \int_0^T\sum_{N\gg 1}\sum_{\mathbf{N}^2_k}\langle N\rangle^{2s-2}\langle N_1\rangle^{1-s}\langle N_3\rangle^{-s}N_2^{-}N_4^{-}N_{k+1}^{-1}\min\big\{N,N_3\big\}\Vert P_{N_1}w\Vert_{H^{s-1}_x}\times
\\ &  \quad \times \big\Vert J^{(-1/2)^+}_xP_{N_2}w\big\Vert_{L^\infty_x}\Vert P_{N_3}z_3\Vert_{H^s_x}\Vert J_x^{1/2^+}P_{N_4}z_4\Vert_{L^\infty_x}\Vert P_{N_{k+1}}(\Psi^m)\Vert_{W^{1,\infty}_x}\prod_{j=5}^k\Vert P_{N_j}z_j\Vert_{L^\infty_x}dt'
\\ & \lesssim_k \Vert w\Vert_{L^\infty_TH^{s-1}_x}\big\Vert J_x^{(-1/2)^+}w\Vert_{L^2_TL^\infty_x}\big\Vert z_3\Vert_{L^\infty_TH^s_x}\big\Vert J_x^{1/2^+}z_4\big\Vert_{L^2_TL^\infty_x}\Vert \Psi\Vert_{L^\infty_tW^{1,\infty}_x}^m\prod_{j=5}^k\big\Vert z_j\Vert_{L^\infty_TH^{1/2^+}_x}.
\end{align*}
Here we have used the fact that, due to our current hypotheses, we always have that $N_3\gtrsim N$. In fact, if $N_2\leq \tfrac{1}{16}N$, since $N_4\ll N_3$ and $N_{k+1}\ll \min\{N_1,N_2,N_3\}$, by using the explicit form of $d_{k+1}$ we infer that \[
N_1\in[\tfrac{1}{2}N,2N] \quad \hbox{and}\quad N_3\in[\tfrac{1}{4}N_1,4N_1].
\]
On the other hand, if $N_2\geq \tfrac{1}{8} N$, then, since $N_3\geq N_4\gtrsim N_2$, we obtain the desired relation.

\medskip

\textbf{Step 3.3:} Finally, we are ready to treat the remaining case, that is, we now deal with the region $\mathbf{N}^3_k$. In order to do so, we begin using the decomposition given in \eqref{decomposition}, from where we infer that it is enough to control the following quantities \begin{align*}
\mathcal{D}_{3,R}^{\mathrm{high}}& :=\sum_{N\gg 1}\sum_{\textbf{N}^3_k}\langle N\rangle^{2s-2}\sup_{t\in(0,T)}\left\vert \int_{\R^2}\Pi_{d_{k+1}}\big(\mathds{1}_{t,R}^{\mathrm{high}}P_{N_1}w,\mathds{1}_tP_{N_2}w\big)P_{N_{k+1}}(\Psi^m)\prod_{j=3}^kP_{N_j}z_j\right\vert,
\\ \mathcal{D}_{3,R}^{\mathrm{low,high}}&:=\sum_{N\gg 1}\sum_{\textbf{N}^3_k}\langle N\rangle^{2s-2}\sup_{t\in(0,T)}\left\vert\int_{\R^2}\Pi_{d_{k+1}}\big(\mathds{1}_{t,R}^{\mathrm{low}}P_{N_1}w,\mathds{1}_{t,R}^{\mathrm{high}}P_{N_2}w\big)P_{N_{k+1}}(\Psi^m)\prod_{j=3}^kP_{N_j}z_j\right\vert,
\\ \mathcal{D}_{3,R}^{\mathrm{low,low}}&:=\sum_{N\gg 1}\sum_{\textbf{N}^3_k}\langle N\rangle^{2s-2}\sup_{t\in(0,T)}\left\vert \int_{\R^2}\Pi_{d_k}\big(\mathds{1}_{t,R}^{\mathrm{low}}P_{N_1}w,\mathds{1}_{t,R}^{\mathrm{low}}P_{N_2}w\big)P_{N_{k+1}}(\Psi^m)\prod_{j=3}^kP_{N_j}z_j\right\vert,
\end{align*}
where $R$ stands for $R:=N_1N_3$, as in the previous cases. For the sake of clarity we split the analysis into two steps. 

\medskip
 
\textbf{Step 3.3.1:} We begin by considering the case of $\mathcal{D}_{3,R}^{\mathrm{high}}$. Once again, the idea is to take advantage of the operator $\mathds{1}_{t,R}^{\mathrm{high}}$ by using Lemma \ref{lem_high_low_1}. Notice that, since in this case we have  $N_3\leq 8N_1$, then, roughly speaking, either we have \begin{align}\label{possibilities_step31}
\{N_1\gg N, \ N_2\sim N\ \hbox{and} \ N_1\sim N_3\} \quad \hbox{or}\quad \{N_1\sim N \,\hbox{ and } \, \max\{N_2,N_3\}\sim  N_1\}.
\end{align}
We can then bound $\mathcal{G}_{3,R}^{\mathrm{high}}$ by using the first inequality in Lemma \ref{lem_high_low_1}, Lemma   \ref{pseudo_holder_l2}, as well as Sobolev's embedding, in the following fashion
\begin{align*}
&\mathcal{D}_{3,R}^{\mathrm{high}}
\\ &  \lesssim_k \sum_{N \gg 1}\sum_{\textbf{N}^3_k}T^{1/4}\langle N\rangle^{2s-2} \Vert \mathds{1}_{T,R}^{\mathrm{high}}\Vert_{L^{4/3}}\bigg\Vert\int_\R \Pi_{d_{k+1}}\big(\mathds{1}_{t,R}^{\mathrm{high}}P_{N_1}w,\mathds{1}_tP_{N_2}w\big)P_{N_{k+1}}(\Psi^m)\prod_{j=3}^kP_{N_{j}}z_j\bigg\Vert_{L^\infty_t}
\\ & \lesssim_k \sum_{N \gg 1}\sum_{\textbf{N}^3_k}T^{1/4}\langle N\rangle^{2s-2}\langle N_1\rangle^{1-s}\langle N_2\rangle^{1-s}(N_1N_3)^{-3/4}\min\{N,N_3\}\Vert P_{N_1}w\Vert_{L^\infty_tH^{s-1}_x}\times
\\ & \qquad \qquad \times \Vert P_{N_2}w\Vert_{L^\infty_tH^{s-1}_x}\Vert P_{N_{k+1}}(\Psi^m)\Vert_{L^\infty_x}\prod_{j=3}^{k} \min\big\{N_j^{1/2},N_j^{-(0^+)}\big\}\Vert P_{N_j}z_j\Vert_{L^\infty_{t}H^{1/2^+}_x}
\\ & \lesssim_k T^{1/4}\Vert w\Vert_{L^\infty_tH^{s-1}_x}^{2}\Vert\Psi\Vert_{L^\infty_{t,x}}^m\prod_{j=3}^k\Vert z_j\Vert_{L^\infty_tH^{1/2^+}_x}.
\end{align*}
Once again, notice that thanks to the operator $\mathds{1}_{t,R}^\mathrm{high}$ acting on the factor $P_{N_2}w$, the same estimates also hold for $\mathcal{D}_{3,R}^{\mathrm{low,high}}$.

\medskip

\textbf{Step 3.3.2:} Now we consider the last term in the decomposition, that is, $\mathcal{D}_{3,R}^{\mathrm{low,low}}$. In fact, first of all, let us recall the notation introduced in the proof of the previous proposition (adapted to the current symbol)
\begin{align*}
&\mathcal{I}_k\big(u_1,...,u_{k+1}):=\sum_{N\gg 1}\sum_{\textbf{N}^3_k}\langle N\rangle^{2s-2}\sup_{t\in(0,T)}\left\vert \int_{\R^2} \Pi_{d_{k+1}}(u_1,u_{2})u_3...u_{k+1}\right\vert.
\end{align*}
Then, by using the the hypothesis $\min\{N_2,N_3\}\geq2^9kN_4$, following the same computations as in \eqref{resonant_o_n1n3}, we infer that the resonant relation satisfies \[
\big\vert\Omega_k(\xi_1,...,\xi_k)\big\vert\sim N_1N_2N_3.
\]
Therefore, taking advantage of the above relation, we can now decompose $\mathcal{D}_{3,R}^{\mathrm{low,low}}$ with respect to modulation variables in the following fashion
\begin{align*}
\mathcal{D}_{3,R}^{\mathrm{low,low}} &\leq \mathcal{I}\big(Q_{\gtrsim N^*}\mathds{1}_{t,R}^{\mathrm{low}}P_{N_1}w,\mathds{1}_{t,R}^{\mathrm{low}}P_{N_2}w,P_{N_3}z_3,...,P_{N_{k}}z_k,P_{N_{k+1}}(\Psi^m)\big)
\\ & \quad +\mathcal{I}\big(Q_{\ll N^*}\mathds{1}_{t,R}^{\mathrm{low}}P_{N_1}w,Q_{\gtrsim N^*}\mathds{1}_{t,R}^{\mathrm{low}}P_{N_2}w,P_{N_3}z_3,...,P_{N_{k}}z_k,P_{N_{k+1}}(\Psi^m)\big)
\\ & \quad +\mathcal{I}\big(Q_{\ll N^*}\mathds{1}_{t,R}^{\mathrm{low}}P_{N_1}w,Q_{\ll N^*}\mathds{1}_{t,R}^{\mathrm{low}}P_{N_2}w,Q_{\gtrsim N^*}P_{N_3}z_3,...,P_{N_{k}}z_k,P_{N_{k+1}}(\Psi^m)\big) 
\\ & \quad +...
\\ & \quad +\mathcal{I}\big(Q_{\ll N^*}\mathds{1}_{t,R}^{\mathrm{low}}P_{N_1}w,Q_{\ll N^*}\mathds{1}_{t,R}^{\mathrm{low}}P_{N_2}w,Q_{\ll N^*}P_{N_3}z_3,...,Q_{\gtrsim N^*}P_{N_{k}}z_k,P_{N_{k+1}}(\Psi^m)\big)
\\ & \quad +\mathcal{I}\big(Q_{\ll N^*}\mathds{1}_{t,R}^{\mathrm{low}}P_{N_1}w,Q_{\ll N^*}\mathds{1}_{t,R}^{\mathrm{low}}P_{N_2}w,Q_{\ll N^*}P_{N_3}z_3,...,Q_{\ll N^*}P_{N_{k}}z_k,P_{N_{k+1}}(\Psi^m)\big)
\\ & =: \mathcal{I}_1+...+\mathcal{I}_{k+1},
\end{align*}
where $N^*$ stands for $N^*:=N_1N_2N_3$. At this point it is important to recall that, since $N_2\geq 9^9k$, then we also have that $N^*\gg N_1N_3=R$, what allows us to use the last inequality in Lemma \ref{lem_high_low_1}. Thus, bounding in a similar fashion as before, by using H\"older and Bernstein inequalities, as well as Lemmas \ref{lem_high_low_1} and \ref{pseudo_holder_l2}, and classical Bourgain estimates, we obtain
\begin{align*}
\mathcal{I}_{1}&\lesssim_k \sum_{N\gg 1}\sum_{\textbf{N}^3_k}\langle N\rangle^{2s-2}\min\{N,N_3\}\big\Vert Q_{\gtrsim N^*}\mathds{1}_{T,R}^{\mathrm{low}}P_{N_1}w\big\Vert_{L^2_tL^2_x}\times 
\\ & \qquad \quad \times \Vert \mathds{1}_{T,R}^{\mathrm{low}}P_{N_2}w\Vert_{L^2_tL^2_x}\Vert P_{N_{k+1}}(\Psi^m)\Vert_{L^\infty_{t,x}}\prod_{j=3}^{k}N_j^{1/2}\Vert P_{N_j}z_j\Vert_{L^\infty_{t}L^2_x}
\\ & \lesssim_k \sum_{N\gg 1}\sum_{\textbf{N}^3_k}\langle N\rangle^{2s-2}\langle N_1\rangle^{1-s}\langle N_2\rangle^{1-s}N_2^{-1}N_3^{-1}\min\{N,N_3\}\big\Vert Q_{\gtrsim N^*}\mathds{1}_{T,R}^{\mathrm{low}}P_{N_1}w\big\Vert_{X^{s-2,1}}\times 
\\ & \qquad \quad \times \Vert \mathds{1}_{T,R}^{\mathrm{low}}\Vert_{L^2}\Vert P_{N_2}w\Vert_{L^\infty_tH^{s-1}_x}\Vert P_{N_{k+1}}(\Psi^m)\Vert_{L^\infty_{t,x}}\prod_{j=3}^{k+1}\min\{N_j^{1/2},N_j^{-}\}\Vert P_{N_j}z_j\Vert_{L^\infty_tH^{1/2^+}_x}
\\ & \lesssim_k T^{1/2}\Vert w\Vert_{X^{s-2,1}}\Vert w\Vert_{L^\infty_tH^{s-1}_x}\Vert \Psi\Vert_{L^\infty_{t,x}}^m\prod_{j=3}^k\Vert z_j\Vert_{L^\infty_tH^{1/2^+}_x},
\end{align*}
where we have used again \eqref{possibilities_step31}. Moreover, it is not difficult to see that, by following the same lines (up to trivial modifications), we can also bound $\mathcal{I}_2$, from where we obtain the same bound. On the other hand, to control $\mathcal{I}_3$ we use again Lemmas \ref{lem_high_low_1} and \ref{pseudo_holder_l2}, as well as H\"older and Bernstein inequalities, from where we get that   \begin{align*}
\mathcal{I}_{3}&\lesssim_k \sum_{N\gg 1}\sum_{\textbf{N}^3_k}\langle N\rangle^{2s-2}N_3^{1/2}\min\{N,N_3\}\big\Vert Q_{\ll N^*}\mathds{1}_{T,R}^{\mathrm{low}}P_{N_1}w\big\Vert_{L^2_tL^2_x}\Vert Q_{\ll N^*}\mathds{1}_{T,R}^{\mathrm{low}}P_{N_2}w\Vert_{L^\infty_tL^2_x}\times
\\ & \qquad \quad  \times \Vert Q_{\gtrsim N^*}P_{N_3}z_3 \Vert_{L^2_tL^2_x}\Vert P_{N_{k+1}}(\Psi^m)\Vert_{L^\infty_{t,x}}\prod_{j=4}^{k}N_j^{1/2}\Vert P_{N_j}z_j\Vert_{L^\infty_tL^2_x}
\\ & \lesssim_k \sum_{N\gg 1}\sum_{\textbf{N}^3_k}T^{1/2}\langle N\rangle^{2s-2}\langle N_1\rangle^{-s}\langle N_2\rangle^{-s}\langle N_3\rangle^{+}\min\{N,N_3\} \Vert P_{N_1}w\Vert_{L^\infty_tH^{s-1}_x}\Vert P_{N_2}w\Vert_{L^\infty_tH^{s-1}_x}\times 
\\ & \qquad \quad \times \Vert Q_{\gtrsim N^*}P_{N_3}z_3\Vert_{X^{(-1/2)^+,1}}\Vert P_{N_{k+1}}(\Psi^m)\Vert_{L^\infty_{t,x}}\prod_{j=4}^{k+1}\min\{N_j^{1/2},N_j^{-(0^+)}\}\Vert P_{N_j}z_j\Vert_{L^\infty_tH^{1/2^+}_x}
\\ & \lesssim_k T^{1/2}\Vert w\Vert_{L^\infty_tH^{s-1}_x}^2\Vert z_3\Vert_{X^{(-1/2)^+,1}}\Vert \Psi\Vert_{L^\infty_{t,x}}^m\prod_{j=4}^k\Vert z_j\Vert_{L^\infty_tH^{1/2^+}_x}.
\end{align*}
Notice that all the remaining cases $\mathcal{I}_i$, $i=4,...,k$, follow very similar lines to the latter case (up to trivial modifications), and they provide exactly the same bound as above, and hence we omit their proof. Finally, to control $\mathcal{I}_{k+1}$ notice that, since all factors $P_{N_i}u$ have an operator $Q_{\ll N^*}$ in front of them, then the factor $P_{N_{k+1}}(\Psi^m)$ is forced to be resonant, and hence in this case we can write $Q_{\gtrsim N^*} P_{N_{k+1}} (\Psi^m) = P_{N_{k+1}}(\Psi^m)$, otherwise $\mathcal{I}_{k+1}=0$ thanks to Lemma \ref{resonant_bourgain}. Moreover, notice also that, in the region $\mathbf{N}^3_k$ we have in particular that $N_1N_2N_3\gg N_{k+1}^3$, and hence we infer that $\vert \tau_{k+1}-\xi_{k+1}^3\vert\sim \vert\tau_{k+1}\vert$, and hence, we can write $P_{N_{k+2}}(\Psi^m)=R_{\gtrsim N^*}P_{N_{k+2}}(\Psi^m)$. Therefore, by using Lemmas \ref{lem_high_low_1} and  \ref{pseudo_holder_l2}, as well as Bernstein inequality and the above properties, we infer that 
\begin{align*}
\mathcal{I}_{k+1}& \lesssim_k \sum_{N\gg 1}\sum_{\mathbf{N}^3_k}\langle N\rangle^{2s-2}\langle N_1\rangle^{-s}\langle N_2\rangle^{-s}N_3^{-1}\min\{N,N_3\}\Vert \mathds{1}_{T,R}^{\mathrm{low}}\Vert_{L^2}^2\Vert P_{N_1}w\Vert_{L^\infty_tH^{s-1}_x}\times
\\ & \quad \ \times \Vert P_{N_2}w\Vert_{L^\infty_tH^{s-1}_x}\Vert \partial_t R_{\gtrsim N^*}P_{N_{k+2}}(\Psi^m)\Vert_{L^\infty_{t,x}}\prod_{i=3}^{k+1}\min\{N_i^{1/2},N_i^{-(0^+)}\}\Vert P_{N_i}u\Vert_{L^\infty_{t}H^{1/2^+}_x}
\\ & \lesssim_k T\Vert w\Vert_{L^\infty_tH^{s-1}_x}^2\Vert\partial_t\Psi\Vert_{L^\infty_{t,x}}\Vert \Psi\Vert_{L^\infty_{t,x}}^{m-1}\prod_{j=3}^k\Vert z_j\Vert_{L^\infty_tH^{1/2^+}_x}.
\end{align*}

\smallskip

Now we explain how we control the contribution of the $L^2_TL^\infty_x$ terms. With this aim, recalling the equation solved by $w(t,x)$, we use the Strichartz estimate \eqref{smoothing_eff} with $\delta=1$, from where we obtain \begin{align*}
\big\Vert J_x^{(-1/2)^+}w\Vert_{L^2_TL^\infty_x}&\lesssim T^{1/4}\Vert w\Vert_{L^\infty_TH^{(-1/2)^+}_x}+T^{3/4}\Vert w\Vert_{L^\infty_TH^{(-1/2)^+}_x}\times
\\ & \quad \times\sum_{k=1}^\infty kc^k\vert a_k\vert\big(\vert u\Vert_{L^\infty_TH^{1/2^+}_x}+\Vert v\Vert_{L^\infty_TH^{1/2^+}_x}+\Vert\Psi\Vert_{L^\infty_TW^{1/2^+,\infty}_x}\big)^{k-1}.
\end{align*}
Thus, gathering all the above estimate, and then using Lemma \ref{mst_basic_lemma}, we conclude the proof of the proposition.
The proof is complete.
\end{proof}

\medskip

\section{Unconditional well-posedness in $H^s$ for $s>1/2$}\label{section_MT1}

\subsection{Existence and unconditional uniqueness}

In this section we shall assume Theorem \ref{MT2} hold, that is, we assume equation \eqref{gkdv_v} is locally well-posed in $H^{3/2^+}(\R)$. In the next section we shall sketch the main ideas of its proof (see \cite{AbBoFeSa} for further details, for example).

\medskip

Before going further, for the sake of simplicity and by abusing notation, recalling that $ \Psi$ is a given function, from now on let us denote by 
\begin{align*}
|||\Psi|||_{r}&:=\Vert \partial_t\Psi+\partial_x^3\Psi+\partial_xf(\Psi)\Vert_{L^\infty_tH^{r}_x}, \quad \hbox{and by}
\\ \mathcal{Q}_*\big(\Vert u\Vert_{L^\infty_TH^{1/2^+}_x}\big)&:= \mathcal{Q}_*\big(\Vert u\Vert_{L^\infty_TH^{1/2^+}_x}, \Vert \Psi\Vert_{L^\infty_tW^{s+1^+,\infty}_x},\Vert \partial_t\Psi\Vert_{L^\infty_{t,x}},|||\Psi|||_{s^+}\big).
\end{align*}
Now, consider a sequence $\{\Psi_n\}_{n\in\N}\subset L^\infty(\R^2)$ satisyfing the hypotheses in  \eqref{hyp_psi} with $s'=\tfrac{3}{2}^+$ for all $n\in\N$, such that \begin{align*}
&\Vert \partial_t\Psi_n-\partial_t\Psi\Vert_{L^\infty_{t,x}}+\Vert \Psi_n-\Psi\Vert_{L^\infty_tW^{s+1^+,\infty}_x}+|||\Psi_n-\Psi|||_{s^+}\xrightarrow{n\to+\infty}0.
\end{align*}
Let $u\in C([0,T_0],H^\infty(\R))$ be a smooth solution to equation \eqref{gkdv_v} associated with $\Psi_n$, with minimal existence time \[T^\star=T^\star(\Vert u_0\Vert_{H^{3/2^+}},|||\Psi_n|||_{3/2^+},\Vert \Psi_n\Vert_{L^\infty_tW^{5/2,\infty}_x})>0,
\]
emanating from an initial data $u_0\in H^\infty(\R)$. Then, according to Proposition \ref{prop_energy_est_impro}, there exist a constant $c>0$ such that, after an application of Cauchy-Schwarz inequality, we have \begin{align}\label{apriori_hsw}
\Vert u\Vert_{L^\infty_TH^s_\omega}^2\leq \Vert u_0\Vert_{H^s_\omega}^2+cT\Vert u\Vert_{L^\infty_TH^s_\omega}^2+cT||| \Psi_n|||_{s^+}^2+ cT^{1/4}\Vert u\Vert_{L^\infty_TH^s_\omega}^2 \mathcal{Q}_*\big(\Vert u\Vert_{L^\infty_TH^{1/2^+}_x}\big),
\end{align}
for all $0< T\leq\min\{1,T_0\}$. We stress that $\mathcal{Q}_*$ only involves norms of $\Psi_n$ associated with $s$ and not with $s'=3/2^+$. Notice also that they do not depend on $T$ either. Thus, we can consider the function
\begin{align*}
F(T):= cT+cT^{1/4}\mathcal{Q}_*\big(\Vert u\Vert_{L^\infty_TH^{1/2^+}_x}\big), \quad T\in[0,T_0].
\end{align*}
At this point it is important to recall that $\Vert u\Vert_{L^\infty_TH^s_\omega}\to\Vert u_0\Vert_{H^s_\omega}$ as $T\to 0$. Moreover, notice that $F(0)=0$, and hence, thanks to the continuity of $T\mapsto F(T)$, we infer the existence of $T_*=T_*(\Vert u_0\Vert_{H^s_\omega})>0$ small enough, such that \[
F(T')<1/2 \quad \hbox{for all} \quad T'<T_*.
\]
In particular, the above inequality along with \eqref{apriori_hsw} implies that
\begin{align}\label{uniform_estimate_hs}
\Vert u\Vert_{L^\infty_{T'}H^s_\omega}\lesssim \Vert u_0\Vert_{H^s_\omega}+|||\Psi|||_{s^+} \quad \hbox{for all}\quad T'<T_*.
\end{align}
Note that, from \eqref{uniform_estimate_hs} we infer that the minimal existence time can be chosen only depending on $\Vert u_0\Vert_{H^{s}_\omega}$ and $|||\Psi|||_{s^+}$. On the other hand, by using Proposition \ref{prop_diff_sol} we have that \begin{align*}
\Vert u-v\Vert_{L^\infty_TH^{s-1}_x}^2&\lesssim \Vert u_0-v_0\Vert_{H^{s-1}}^2 
+T^{1/4}\Vert u-v\Vert_{L^\infty_TH^{s-1}_x}^2\mathcal{Q}^*\big(\Vert u\Vert_{L^\infty_TH^s_x},\Vert v\Vert_{L^\infty_TH^s_x}\big),
\end{align*}
where, as above, by an abuse of notation we are denoting by \[
\mathcal{Q}^*\big(\Vert u\Vert_{L^\infty_TH^s_x},\Vert v\Vert_{L^\infty_TH^s_x}\big):= \mathcal{Q}^*\big(\Vert u\Vert_{L^\infty_TH^s_x},\Vert v\Vert_{L^\infty_TH^s_x},\Vert \Psi\Vert_{L^\infty_tW^{s+1,\infty}_x},\Vert \partial_t\Psi\Vert_{L^\infty_{t,x}}\big).
\]
Therefore, a similar continuity argument as before yield us to the existence of a positive time $\tilde{T}_*=\tilde{T}_*(\Vert u_0\Vert_{H^{s}},\Vert v_0\Vert_{H^{s}})>0$ such that 
 \begin{align}\label{uniform_estimate_diff2_concl}
\Vert u-v\Vert_{L^\infty_{\tilde T'}H^{s-1}}\lesssim \Vert u_0-v_0\Vert_{H^{s-1}} \quad \hbox{for all}\quad \tilde T'<\tilde T_*.
\end{align}
Now, let us consider an initial data $u_0\in H^s(\R)$ with $s>1/2$. Consider a smooth sequence of functions $\{u_{0,n}\}_{n\in\N}$ strongly converging to $u_0$ in $H^s(\R)$. Let $u_n(t)$ be the solution to equation \eqref{gkdv_v} associated with $\Psi_n$, with initial data $u_{0,n}$. Note that the above analysis assures us that we can define the whole family of solutions $\{u_n\}$  in a common existence time interval $[0,T^*]$, for some $T^*>0$ only depending on $\Vert u_0\Vert_{H^s}$ and $|||\Psi|||_{s^+}$. Then, thanks to estimate \eqref{uniform_estimate_hs} with $\omega_N\equiv 1$, $\{u_n\}_{n\in\N}$ defines a bounded sequence in $C([0,T^*],H^s(\R))$, and hence, we can extract a subsequence (which we still denote by $u_n$) converging in the weak-$\star$ topology of $L^\infty_{T^*}H^s_x$ to some limit $u$. Moreover, from this latter convergence we also infer that $\partial_xf(u_n+\Psi_n)$ converges in a distributional sense to $\partial_xf(u+\Psi)$. Therefore, the limit object $u$ solves equation \eqref{gkdv_v} with $\Psi$, in a distributional sense. Furthermore, from  \eqref{uniform_estimate_diff2_concl} we get that $\{u_n\}$ defines a Cauchy sequence in $C([0,T^*],H^{s-1}(\R))$, and hence $\{u_n\}_{n\in\N}$ strongly converges to $u$ in $L^\infty((0,T^*),H^{s-1}(\R))$. By the same reasons, from estimate \eqref{uniform_estimate_diff2_concl} we conclude that this solution is the only one in the class $L^\infty((0,T),H^{s}(\R))$. On the other hand, the above results ensure that the map \[
[0,T^*]\ni t\mapsto u(t)\in H^s(\R)
\]
is weakly continuous. In fact, let $\varphi\in H^s$ arbitrary, and consider $\tilde{\varphi}\in H^{s+1}$ to be any function satisfying $\Vert \varphi-\tilde{\varphi}\Vert_{H^s}\leq \varepsilon/\Vert u\Vert_{L^\infty_{T^*}H^s_x}$. Then, for all $t,t'\in(0,T^*)$ we have \begin{align*}
\vert\langle u(t)-u(t'),\varphi\rangle\vert_{H^s}&\leq \varepsilon+\vert\langle J_x^{s-1}(u(t)-u(t')),J^{s+1}\tilde\varphi\rangle_{H^s}\vert
\\ & \leq \varepsilon+2\Vert u_n-u\Vert_{L^\infty_TH^{s-1}_x}\Vert \tilde\varphi\Vert_{H^{s+1}}+\Vert u_n(t)-u_n(t')\Vert_{H^{s-1}_x}\Vert\tilde\varphi\Vert_{H^{s+1}}.
\end{align*}
Then, choosing $n$ sufficiently large, using the strong convergence result in $H^{s-1}$, we deduce that we can control the right-hand side of the above inequality by $3\varepsilon$, and hence $u(t)$ is weakly continuous from $[0,T^*]$ into $H^s(\R)$. Moreover recalling that, due to \eqref{uniform_estimate_diff2_concl},  $\{u_n\}$ defines a Cauchy sequence in $C([0,T^*],H^{s-1}(\R))$, we infer in particular that $u\in C([0,T^*],H^{s-1}(\R))$.

\smallskip

\subsection{Continuity of the flow map}

We are finally ready to prove both, the continuity of the flow map and the continuity of $u(t)$ with values in $H^s(\R)$. Before getting into the details, let us recall the following standard lemma.
\begin{lem}[See \cite{KoTz}]
Let $\{f_n\}_{n\in\N}\subset H^s(\R)$ satisfying $f_n\to f$ in $H^s(\R)$. Then, there exists an increasing sequence $\{\omega_N\}_{N\in\mathbb{D}}\subset\R$ of positive numbers satisfying $\omega_N\leq \omega_{2N}\leq 2^+\omega_N$, with \[
\{ \omega_N\nearrow+\infty \, \hbox{ as } \, N\to+\infty\} \quad \hbox{and}\quad \{ \omega_N\to1 \, \hbox{ as } \, N\to0\},
\]
such that 
\[
\sup_{n\in\N}\sum_{N>0}\omega_N^2\langle N\rangle^{2s}\Vert P_Nf_n\Vert_{L^2}^2<\infty.
\] 
\end{lem}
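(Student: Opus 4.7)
The plan is to build $\omega_N$ by a dyadic schedule driven by the uniform tail function $\psi(M) := \sup_{n} \sum_{N \geq M} \langle N\rangle^{2s} \Vert P_N f_n\Vert_{L^2}^2$. First I would establish the crucial \emph{equiconvergence} $\psi(M) \to 0$ as $M \to \infty$. To do this, for any $\delta>0$ pick $n_0$ with $\Vert f_n - f\Vert_{H^s} < \delta$ for $n \geq n_0$; then the triangle inequality in $\ell^2$ gives
\[
\sum_{N \geq M} \langle N\rangle^{2s} \Vert P_N f_n\Vert_{L^2}^2 \leq 2\sum_{N \geq M} \langle N\rangle^{2s} \Vert P_N f\Vert_{L^2}^2 + 2\delta^2 \qquad (n \geq n_0),
\]
whose first term is $<\delta^2$ once $M$ is large (tail of a single $H^s$-function), and the finitely many indices $n<n_0$ can be accommodated by enlarging $M$ further. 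Hence $\psi$ is decreasing, bounded by $\sup_n \Vert f_n\Vert_{H^s}^2<\infty$, and vanishes at infinity.

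With $\psi$ in hand, I would select a strictly increasing sequence of dyadic numbers $1 = M_0 < M_1 < M_2 < \cdots$ with $M_k = 2^{j_k}$ satisfying the two compatible requirements $\psi(M_k) \leq 2^{-2k}$ (achievable since $\psi(M) \to 0$) and $j_{k+1} - j_k \geq \max\{k+1, \lceil 1/(2\varepsilon)\rceil\}$ (enforceable by enlarging $M_{k+1}$ further, for a fixed $\varepsilon>0$ that dictates the desired doubling constant). Then I set $\omega_N := 1$ for $N \leq 1$ and, for each dyadic $N = 2^j$ with $j_k \leq j < j_{k+1}$,
\[
\omega_N := 2^{\,(k + (j-j_k)/(j_{k+1}-j_k))/2},
\]
which interpolates geometrically from $2^{k/2}$ at $N=M_k$ to $2^{(k+1)/2}$ at $N=M_{k+1}$. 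This construction is manifestly increasing with $\omega_N \to 1$ as $N \to 0$ and $\omega_N \nearrow +\infty$ as $N \to +\infty$, and every consecutive ratio equals $2^{1/(2(j_{k+1}-j_k))} \leq 2^{\varepsilon}$, so $\omega_{2N} \leq 2^{\varepsilon}\omega_N$ as required.

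Finally I would verify the uniform summability by a clean dyadic decomposition. On each block $[M_k, M_{k+1})$ one has $\omega_N^2 < 2^{k+1}$, hence
\[
\sum_{N \in [M_k, M_{k+1})} \omega_N^2 \langle N\rangle^{2s} \Vert P_N f_n\Vert_{L^2}^2 \;\leq\; 2^{k+1}\,\psi(M_k) \;\leq\; 2^{1-k},
\]
uniformly in $n$, while the contribution of $\{N\leq 1\}$ is bounded by $\sup_n\Vert f_n\Vert_{H^s}^2 < \infty$; summing the geometric series closes the argument. The only real subtlety is the combined tuning of the two requirements on $M_k$ (fast decay $\psi(M_k)\leq 2^{-2k}$ together with wide dyadic gaps $j_{k+1}-j_k \geq 1/(2\varepsilon)$ forced by the slow-doubling constraint); but since both conditions are preserved under enlarging $M_{k+1}$, they are simultaneously attainable, and the remainder of the argument is routine dyadic bookkeeping.
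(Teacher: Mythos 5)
Your proof is correct. The paper does not prove this lemma — it is stated with the citation to Koch--Tzvetkov and used directly — so there is no in-paper argument to compare against; but your construction is exactly the standard slowly-varying frequency-envelope argument behind that reference: first extract the uniform tail decay $\psi(M)\to 0$ from $f_n\to f$ in $H^s$ (splitting $P_Nf_n=P_Nf+P_N(f_n-f)$ and handling the finitely many small $n$ separately), then build $\omega_N$ by geometric interpolation on dyadic blocks $[M_k,M_{k+1})$ chosen so that $\psi(M_k)\le 2^{-2k}$ while $j_{k+1}-j_k\ge \lceil 1/(2\varepsilon)\rceil$ enforces $\omega_{2N}\le 2^{\varepsilon}\omega_N$, and finally sum the block contributions $\le 2^{k+1}\psi(M_k)\le 2^{1-k}$. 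The bookkeeping is sound, including the transition at block boundaries. The extra requirement $j_{k+1}-j_k\ge k+1$ is not actually used and can be dropped. One harmless wrinkle: since $M_0=1$ is fixed, the condition $\psi(M_0)\le 2^{0}=1$ may fail if $\sup_n\Vert f_n\Vert_{H^s}>1$; but the block-$0$ contribution is still bounded by $2\,\psi(1)\le 2\sup_n\Vert f_n\Vert_{H^s}^2<\infty$, so the conclusion is unchanged (or simply impose $\psi(M_k)\le 2^{-2k}$ only for $k\ge 1$).
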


With this in mind, let $\{u_n\}_{n\in\N}$ be a sequence of solutions in $L^\infty([0,T],H^s(\R))$ associated with an initial datum $u_n(0)$ satisfying $u_n(0)\to u(0)$ in $H^s(\R)$.  Now, we use the previous lemma with $f_n=u_{n}(0)$ and $f=u(0)$. Consider $\{\omega_N\}_{N\in\mathbb{D}}$ given by the previous lemma. Then, it follows from Proposition \ref{prop_energy_est_impro} and estimate \eqref{uniform_estimate_hs} that
\begin{align}\label{uniform_bound_conclusions}
\sup_{n\in\N}\sup_{t\in(0,T)}(\Vert u_n(t)\Vert_{H^s_\omega}+\Vert u(t)\Vert_{H^s_\omega})<+\infty.
\end{align}
Note that the strong continuity in $C([0,T],H^{s-1}(\R))$, together with the boundedness in $H^s_\omega$ implies, in particular, the strong continuity of the map $[0,T]\ni t\mapsto u(t)$ in $H^s(\R)$.

\medskip 

Finally, to complete the proof of Theorem \ref{MT1} it only remains to show the continuity of the flow map.  Consider $\{u_n\}$ and $u$ as above. We intend to control $\Vert u_n-u\Vert_{L^\infty_TH^s_x}$. First of all, by using triangular inequality we have \[
\Vert u_n-u\Vert_{L^\infty_TH^s_x}\leq \Vert u_n-P_{\leq N}u_n\Vert_{L^\infty_TH^s_x}+\Vert P_{\leq N}u_n-P_{\leq N}u\Vert_{L^\infty_TH^s_x}+\Vert P_{\leq N}u-u\Vert_{L^\infty_TH^s_x},
\]
for any $N\in\mathbb{D}$. Then, take $\varepsilon\in(0,1)$ arbitrary but fixed. We claim that, as a particular consequence of  \eqref{uniform_bound_conclusions},  there exists $N_*\gg1$ dyadic, such that for all $t\in[0,T]$ we have \[
\sup_{n\in\N}\Vert u_n(t)-P_{\leq N_*}u_n(t)\Vert_{H^s_x}+\Vert u(t)-P_{\leq N_*}u(t)\Vert_{H^s_x}<\tfrac{1}{2}\varepsilon.
\]
In fact, it is enough to notice that \begin{align*}
&\sup_{n\in\N}\sup_{t\in(0,T)}\Vert u_n(t)-P_{\leq N_*}u_n(t)\Vert_{H^s_x}+\Vert u(t)-P_{\leq N_*}u(t)\Vert_{H^s_x}
\\ & \qquad \lesssim \sup_{n\in\N}\sup_{t\in(0,T)}\dfrac{1}{\omega_{N_*}}\left(\sum_{N>N_*}\omega_N^2\langle N\rangle^{2s}\big(\Vert u_n(t)\Vert_{L^2_x}^2+\Vert u(t)\Vert_{L^2_x}^2\big)\right)^{1/2}.
\end{align*}
Therefore, since $\omega_N\nearrow+\infty$ as $N\to+\infty$, we conclude the proof of the claim. On the other hand, from the strong convergence in $C([0,T],H^{s-1}(\R))$ deduced in the previous subsection, we infer the existence of $n_*$ such that, for all $n\geq n_*$ and all $t\in(0,T)$, we have \[
\Vert P_{\leq N_*}u_n(t)-P_{\leq N_*}u(t)\Vert_{H^s_x}\leq 2N_*\Vert P_{\leq N_*}u_n(t)-P_{\leq N_*}u(t)\Vert_{H^{s-1}_x}<\tfrac{1}{2}\varepsilon.
\]
Gathering the last two estimates, we conclude the proof of the continuity of the flow map, and hence, the proof of Theorem \ref{MT1}. \qed

\subsection{Proof of Theorem \ref{MT_Zhidkov}}

The proof of Theorem \ref{MT_Zhidkov} is a direct consequence of Theorem \ref{MT1} along with the following lemma, proved in \cite{Ga,IoLiSc}.
\begin{lem}\label{deco}
Let $\Phi\in \mathcal{Z}^s(\R)$ for $s> \tfrac{1}{2}$. Then, there exists $\Psi\in C_b^\infty$ and $v\in H^s$ such that \[
\Phi=u+\Psi, \quad \hbox{with} \quad \Psi'\in H^\infty(\R).
\]
Moreover, the maps $\Phi\mapsto \Psi$ and $\Phi\mapsto u$ can be defined as linear maps such that for every $\tilde{s}>\tfrac{1}{2}$ the following holds: The map $\Phi\mapsto \Psi$ is continuous from $\mathcal{Z}^s$ into $\mathcal{Z}^{\tilde{s}}$, whereas the map $\Phi\mapsto v$ is continuous from $\mathcal{Z}^s$ into $H^s$.
\end{lem}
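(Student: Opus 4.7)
The plan is to define the decomposition by a fixed Littlewood--Paley splitting at unit frequency, and then verify each claim by computing the Fourier side. Concretely, set
\[
\Psi := P_{\leq 1}\Phi = \check\eta * \Phi, \qquad v := \Phi - \Psi = P_{>1}\Phi,
\]
where $\eta$ is the cutoff introduced in \eqref{def_eta}. Since $\check\eta\in\mathcal{S}(\R)$ and $\Phi\in L^\infty(\R)$, the convolution $\check\eta*\Phi$ makes sense pointwise, is smooth, and satisfies $\Vert\Psi\Vert_{L^\infty}\lesssim\Vert\Phi\Vert_{L^\infty}$; all of its derivatives are convolutions of Schwartz functions with an $L^\infty$ datum, hence bounded, so $\Psi\in C^\infty_b(\R)$. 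The maps $\Phi\mapsto\Psi$ and $\Phi\mapsto v$ are linear by construction.

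The first substantive point is to upgrade $\Psi'$ to $H^\infty$. For this, note that $\widehat{\Psi'}(\xi)=i\xi\,\eta(\xi)\widehat{\Phi'}(\xi)$ is supported in $\{\vert\xi\vert\leq 2\}$. Since $s-1>-1/2$ the weight $(1+\vert\xi\vert^2)^{s-1}$ is bounded below on $\{\vert\xi\vert\leq 2\}$, so $\eta\widehat{\Phi'}\in L^2$ with
\[
\Vert \eta\widehat{\Phi'}\Vert_{L^2}\lesssim \Vert\Phi'\Vert_{H^{s-1}}.
\]
Multiplying by any power of $\langle\xi\rangle$ only costs a constant on this compact support, so $\Psi'\in H^k(\R)$ for every $k\geq 0$, with $\Vert\Psi'\Vert_{H^{\tilde s-1}}\lesssim_{\tilde s}\Vert\Phi'\Vert_{H^{s-1}}$. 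This proves that $\Psi'\in H^\infty$ and simultaneously gives the continuity of $\Phi\mapsto\Psi$ from $\mathcal{Z}^s$ into $\mathcal{Z}^{\tilde s}$.

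Next I would show $v\in H^s$. Here the key trick is that $v$ has Fourier support in $\{\vert\xi\vert\geq 1\}$, so $\hat v(\xi)=\widehat{v'}(\xi)/(i\xi)$ on that set. Since $v'=P_{>1}\Phi'$ is just a Littlewood--Paley high-frequency projection of $\Phi'\in H^{s-1}$, we have $\Vert v'\Vert_{H^{s-1}}\lesssim\Vert\Phi'\Vert_{H^{s-1}}$. Then
\[
\Vert v\Vert_{H^s}^2=\int_{\vert\xi\vert\geq 1}(1+\vert\xi\vert^2)^s\vert\hat v(\xi)\vert^2d\xi\lesssim\int_{\vert\xi\vert\geq 1}(1+\vert\xi\vert^2)^{s-1}\vert\widehat{v'}(\xi)\vert^2d\xi\leq\Vert v'\Vert_{H^{s-1}}^2,
\]
which gives $v\in H^s$ together with the continuity of $\Phi\mapsto v$ from $\mathcal{Z}^s$ into $H^s$.

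The only subtle step is making sense of $v\in L^2$ to begin with, since a priori $v$ lives only in $\mathcal{S}'(\R)$ (its low-frequency part was removed but it is still only bounded). The argument in the previous paragraph is exactly what circumvents this: the identity $\hat v=\widehat{v'}/(i\xi)$ is valid as a tempered distribution because $1/\xi$ is smooth on $\supp\hat v\subset\{\vert\xi\vert\geq 1\}$, and the right-hand side is in fact an $L^2$ function by the $H^{s-1}$ bound on $v'$, which then forces $v\in L^2$ a posteriori. I expect this Fourier-side argument to be the only nontrivial ingredient; everything else is just keeping track of linearity and of uniform constants.
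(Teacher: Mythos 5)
Your proof is correct, and the decomposition-into-low-and-high-frequencies strategy is the same as the paper's; the only substantive difference is the choice of low-pass multiplier. The paper takes $\Psi = k*\Phi$ with the Gaussian heat kernel $k(x)=(4\pi)^{-1/2}e^{-x^2/4}$, so that $\widehat{\Psi}=e^{-\xi^2}\widehat{\Phi}$ and $\mathcal{F}(\Phi-\Psi)=(1-e^{-\xi^2})\widehat{\Phi}$; the two required multiplier bounds are then the rapid decay of $\langle\xi\rangle^{N}e^{-2\xi^2}$ (to get $\Psi'\in H^\infty$) and the boundedness of $(1+\xi^2)(1-e^{-\xi^2})^2/\xi^2$ (to get $u\in H^s$, using that $1-e^{-\xi^2}=O(\xi^2)$ near the origin). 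You instead use the sharp Littlewood--Paley cutoff $\Psi=P_{\leq1}\Phi$, which makes both of these points trivial by compact support considerations: $\widehat{\Psi'}$ is compactly supported, and $\hat v$ is supported away from $\xi=0$ so the division by $\xi$ is clean. Your version is arguably a bit more economical; the Gaussian choice has the cosmetic advantage that $\Psi$ is explicitly given by an integral formula with positive kernel, which is what the cited works \cite{Ga,IoLiSc} use. One small typo to fix: the identity should read $\widehat{\Psi'}(\xi)=\eta(\xi)\widehat{\Phi'}(\xi)$, without the extra factor of $i\xi$ (you in fact use the correct expression $\eta\widehat{\Phi'}$ in the very next line, so the rest of the argument is unaffected).
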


\begin{proof} As we already mentioned, the proof follows almost the same lines as the corresponding versions in \cite{Ga,IoLiSc}. However, due to the hypothesis $s>1/2$ we need a slight modification of the argument. In fact, we shall actually explicitly define the function $\Psi$. Indeed, let us consider $$
\Psi(x):=(k*\Phi)(x) \quad \hbox{where}\quad k(x):=\tfrac{1}{(4\pi)^{1/2}}e^{-x^2/4}.
$$
Then, it immediately follows that $\Psi\in C^\infty_b(\R)$ and that $\Psi'\in H^\infty(\R)$. Therefore, $\Phi-\Psi\in L^\infty \subset\mathcal{S}'$. Now, by direct computations we obtain \[
\mathcal{F}\big(\Phi-\Psi)=(1-e^{-\xi^2})\widehat{\Phi}(\xi)=(1+\vert \xi\vert^2)^{1/4}\left(\tfrac{1-e^{-\xi^2}}{\xi}\right)\times\xi(1+\vert\xi\vert^2)^{-1/4}\widehat{\Phi}(\xi)=:\mathrm{I}\times\mathrm{II}.
\]
Then, it is enough to notice that $\mathrm{I}\in L^\infty$ and that, due to hypothesis $\Phi\in\mathcal{Z}^s(\R)$, we get $\mathrm{II}\in L^2$. It is not difficult to see that from the above computation we have $u:=\Phi-\Psi\in H^s$. Finally, to obtain the continuity part of the statement, it is enough to notice that \begin{align*}
\Vert \Psi'\Vert_{H^{\tilde{s}-1}}^2 \leq \Vert \Phi'\Vert_{H^{s-1}}^2\sup_{\xi\in\R}\left((1+\xi^2)^{\tilde{s}-s}e^{-2\xi^2}\right)\lesssim \Vert \Phi'\Vert_{H^{s-1}}^2.
\end{align*}
On the other hand, by straightforward computations from the definition of $\Psi$, we also obtain that $\Vert \Psi\Vert_{L^\infty}\leq \Vert\Phi\Vert_{L^\infty}$, what give us the continuity of the map $\Phi\mapsto \Psi$ from $\mathcal{Z}^s$ into $\mathcal{Z}^{\tilde{s}}$. Furthermore, proceeding similarly as above we also get that \begin{align*}
\Vert u\Vert_{H^s}^2\leq \Vert \Phi'\Vert_{H^{s-1}}^2 \sup_{\xi\in\R}\left(\dfrac{(1+\xi^2)(1-e^{-\xi^2})^2}{\xi^2}\right)\lesssim \Vert \Phi'\Vert_{H^{s-1}}^2,
\end{align*}
what give us the continuity of $\Phi\mapsto u$ from $\mathcal{Z}^s(\R)$ into $H^s(\R)$. The proof is complete.
\end{proof}

Therefore, by using the above lemma, we can decompose the initial data $v(0,\cdot)$ associated with the IVP \eqref{ggkdv_v} into two functions $u_0\in H^s(\R)$ and $\Psi\in\mathcal{Z}^\infty(\R)$. Hence, it is enough to write \eqref{ggkdv_v} in terms of the Cauchy problem \eqref{gkdv_v} with $\Psi=\Psi(x)$ being a time-independent function belonging to $\Psi\in\mathcal{Z}^\infty(\R)$. Notice that $\Psi$ satisfies all the hypotheses in \eqref{hyp_psi}. Thus, Theorem \ref{MT_Zhidkov} follows by using Theorem \ref{MT1} with the above decomposition.

\medskip

\section{Local well-posedness in $H^{3/2^+}(\R)$}\label{section_MT_smooth}

This section is devoted to show the following result, that gives us the LWP for smooth initial data.

\begin{thm}[LWP for smooth data]\label{MT2}
The Cauchy problem associated with \eqref{gkdv_v} is locally well-posed in $H^s(\R)$ for $s>3/2$, with minimal existence time \[
T=T\big(\Vert u_0\Vert_{H^s_x},\Vert \Psi\Vert_{L^\infty_tW^{s+1^+,\infty}_x},|||\Psi|||_s\big)>0.
\]
\end{thm}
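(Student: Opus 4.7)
The plan is to follow the classical energy method combined with parabolic regularization, as in Kato's approach, adapting the standard commutator estimates of Kato--Ponce to accommodate the inhomogeneous terms coming from $\Psi$. First, I would introduce the regularized Cauchy problem
\[
\partial_t u^\varepsilon + \partial_x^3 u^\varepsilon + \varepsilon \partial_x^4 u^\varepsilon + \partial_x\big(f(u^\varepsilon+\Psi)-f(\Psi)\big) = -\big(\partial_t\Psi+\partial_x^3\Psi+\partial_x f(\Psi)\big), \qquad u^\varepsilon(0)=u_0,
\]
for $\varepsilon>0$. The parabolic term $\varepsilon\partial_x^4$ together with the Duhamel formula for the regularized semigroup $e^{-t(\partial_x^3+\varepsilon\partial_x^4)}$ makes the fixed-point argument in $C([0,T_\varepsilon],H^s)$ straightforward: the contraction estimate uses only $H^s \hookrightarrow L^\infty$ (valid for $s>1/2$), the analytic-nonlinearity expansion, and the hypotheses on $\Psi$ in \eqref{hyp_psi}.

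The core of the argument is the $\varepsilon$-uniform a priori estimate in $H^s$ with $s>3/2$. I would apply $J^s_x$ to the regularized equation, pair with $J^s_x u^\varepsilon$, and integrate. The linear dispersive term and the viscosity term (which produces a useful $-\varepsilon\|J^{s+2}_x u^\varepsilon\|_{L^2}^2$) cause no trouble. For the nonlinear flux write
\[
\partial_x\big(f(u^\varepsilon+\Psi)-f(\Psi)\big) = f'(u^\varepsilon+\Psi)\partial_x u^\varepsilon + \big(f'(u^\varepsilon+\Psi)-f'(\Psi)\big)\partial_x\Psi.
\]
The first summand is treated by splitting
\[
\int J^s_x\big(f'(u^\varepsilon+\Psi)\partial_x u^\varepsilon\big)\, J^s_x u^\varepsilon\,dx = \int f'(u^\varepsilon+\Psi)\, J^s_x\partial_x u^\varepsilon\cdot J^s_x u^\varepsilon\,dx + \int \big[J^s_x,f'(u^\varepsilon+\Psi)\big]\partial_x u^\varepsilon\cdot J^s_x u^\varepsilon\,dx.
\]
The first integral is controlled by integration by parts, gaining a factor $\partial_x f'(u^\varepsilon+\Psi)$ which is bounded in $L^\infty$ in terms of $\|u^\varepsilon\|_{H^s}$ (since $s>3/2$) and $\|\Psi\|_{W^{s+1^+,\infty}}$; the commutator is bounded by Kato--Ponce in terms of the same quantities. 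The second summand and the source term $\partial_t\Psi+\partial_x^3\Psi+\partial_x f(\Psi)$ are directly absorbed using the hypotheses $\Psi\in L^\infty_tW^{s+1^+,\infty}_x$ and $|||\Psi|||_s<\infty$. Expanding $f$ and $f'$ via their Taylor series and summing as in Lemma \ref{mst_basic_lemma} (exploiting $\limsup|a_k|^{1/k}=0$) yields
\[
\tfrac{d}{dt}\|u^\varepsilon\|_{H^s}^2 \leq G\!\left(\|u^\varepsilon\|_{H^s},\|\Psi\|_{L^\infty_tW^{s+1^+,\infty}_x}\right)\|u^\varepsilon\|_{H^s}^2 + C\,|||\Psi|||_s\,\|u^\varepsilon\|_{H^s},
\]
for some smooth $G$, so that Gronwall provides a uniform-in-$\varepsilon$ bound on some interval $[0,T]$ with $T=T(\|u_0\|_{H^s},\|\Psi\|_{L^\infty_tW^{s+1^+,\infty}_x},|||\Psi|||_s)$.

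With the uniform bound in hand, I would extract a weak-$*$ limit $u$ in $L^\infty((0,T),H^s)$ as $\varepsilon\to 0$. Strong convergence in $C([0,T],H^{s-1})$, from which one recovers that $u$ solves \eqref{gkdv_v} distributionally, follows from an energy estimate at regularity $s-1$ for the difference $u^{\varepsilon_1}-u^{\varepsilon_2}$ (here $s-1>1/2$ so the nonlinear terms are handled by the algebra structure of $H^{s-1}$ via Lemma \ref{sob_algebra}). Uniqueness in $C([0,T],H^s)$ follows by the same difference estimate at regularity $s-1$ applied to two solutions. Finally, for continuous dependence and persistence in $C([0,T],H^s)$, I would run a Bona--Smith argument: regularize the initial data $u_0^\delta = P_{\leq\delta^{-1}}u_0\in H^\infty$, use the above existence for smooth data together with the $H^s$ energy estimate and the $H^{s-1}$ difference estimate to obtain $u^\delta\to u$ in $C([0,T],H^s)$, thereby ensuring continuity both in time and with respect to the initial data.

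The main obstacle is closing the energy inequality uniformly in $\varepsilon$ given that $f$ is merely analytic and $\Psi$ is only bounded (not integrable). The analytic structure is handled by the summability hypothesis \eqref{hyp_f_nonl}, which makes the Taylor-series bounds converge exactly as in the proof of Proposition \ref{prop_energy_est_impro} and Lemma \ref{mst_basic_lemma}; the non-decaying background $\Psi$ is absorbed thanks to the precise assumption $(\partial_t\Psi+\partial_x^3\Psi+\partial_x f(\Psi))\in L^\infty_tH^{s^+}_x$, which replaces what would otherwise be a divergent $\partial_x^3\Psi$ contribution by a term controlled in $H^s$. Since these two mechanisms are exactly the ones already exploited in the low-regularity analysis of Section \ref{energy_section}, the smooth-case proof amounts to running the classical Kato scheme of \cite{AbBoFeSa,IoLiSc} with those substitutions, as indicated in the paragraph preceding the statement of Theorem \ref{MT2}.
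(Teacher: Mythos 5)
Your proposal follows essentially the same route as the paper: parabolic regularization, an $H^s$ energy estimate closed by a Kato-type commutator bound combined with the hypotheses on $\Psi$, a passage to the limit, a lower-order difference estimate for uniqueness, and a Bona--Smith argument for strong continuity. The only differences are cosmetic: the paper uses second-order dissipation $-\mu\partial_x^2$ (invoking the smoothing estimate of Lemma~\ref{lema_group}) rather than $\varepsilon\partial_x^4$, applies the precise commutator Lemma~\ref{Kato} rather than citing Kato--Ponce, and runs the uniqueness estimate at the $L^2$ level rather than at $H^{s-1}$; all of these are interchangeable.
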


To establish the existence and uniqueness of smooth solutions to the IVP \eqref{gkdv_v}, we use the parabolic regularization method, that is, we
consider solutions to the following equation
\begin{align}\label{parabolic_reg}
\partial_t u+\partial_x^3u-\mu\partial_x^2u=-(\partial_t\Psi+\partial_x^3\Psi+\partial_xf(\Psi))-\partial_x(f(u+\Psi)-f(\Psi)),
\end{align}
for $\mu>0$. Roughly, the idea is to start by showing LWP of the above equation, and then take the limit $\mu\to0$. Since these ideas are (nowadays) fairly standard and have been used multiple times in many different contexts, we shall be brief and only sketch their main estimates. We refer to \cite{AbBoFeSa} and \cite{IoLiSc} for further details.

\medskip

Before going further let us recall some preliminary lemmas needed to prove Theorem \ref{MT_gwp}. The following lemma give us the main estimate to prove the LWP of \eqref{parabolic_reg} (see \cite{Io}). 
\begin{lem}\label{lema_group}
Let $\mu>0$ fixed. Let $W_\mu(t)$ to be the free group associated with the linear part of \eqref{parabolic_reg}, that is \[
W_\mu(t) :=\exp((\mu\partial_x^2-\partial_x^3)t).
\]
Then, for all $s\in\R$, $r\geq0$ and all $f\in H^s(\R)$ the following holds \[
\Vert W_\mu(t)f\Vert_{H^{s+r}}\lesssim_r\Big(1+\dfrac{1}{(2\mu t)^r}\Big)^{1/2}\Vert f\Vert_{H^s}.
\]
\end{lem}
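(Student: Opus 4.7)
The statement is a classical parabolic smoothing estimate, and the natural plan is to pass to the Fourier side, where the multiplier associated with $W_\mu(t)$ becomes explicit, and then extract a gain of $r$ derivatives from the heat-type decay $e^{-\mu t \xi^2}$.

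More precisely, the first step is to observe that, by applying the Fourier transform in space to the equation $\partial_t u = (\mu\partial_x^2 - \partial_x^3)u$, one obtains the explicit formula
\[
\widehat{W_\mu(t) f}(\xi) = e^{-\mu t \xi^2 + it\xi^3}\,\hat{f}(\xi),\qquad \forall\,\xi\in\mathbb{R}.
\]
In particular, $|\widehat{W_\mu(t)f}(\xi)| = e^{-\mu t \xi^2}|\hat{f}(\xi)|$, so by Plancherel
\[
\|W_\mu(t)f\|_{H^{s+r}}^2 = \int_\mathbb{R} (1+|\xi|^2)^{s+r}\,e^{-2\mu t \xi^2}\,|\hat f(\xi)|^2\,d\xi.
\]
The goal is then reduced to bounding the multiplier $(1+|\xi|^2)^r e^{-2\mu t \xi^2}$ pointwise by $C_r(1+(2\mu t)^{-r})$, uniformly in $\xi\in\mathbb R$.

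The key step is the elementary inequality $(1+|\xi|^2)^r \leq 2^r(1+|\xi|^{2r})$, which splits the multiplier into two pieces. The first piece gives $e^{-2\mu t\xi^2}\leq 1$ directly. For the second piece, one makes the substitution $u := 2\mu t\,\xi^2\geq 0$, so that
\[
|\xi|^{2r}\,e^{-2\mu t\xi^2} = \frac{u^r e^{-u}}{(2\mu t)^r} \leq \frac{C_r}{(2\mu t)^r},
\]
where $C_r := \sup_{u\geq 0} u^r e^{-u} < \infty$ (the supremum is attained at $u=r$ if $r>0$, and equals $1$ if $r=0$). Combining these two bounds yields
\[
(1+|\xi|^2)^r e^{-2\mu t \xi^2}\ \lesssim_r\ 1 + \frac{1}{(2\mu t)^r}.
\]

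Inserting this pointwise bound back into the Plancherel identity gives
\[
\|W_\mu(t)f\|_{H^{s+r}}^2 \lesssim_r \left(1+\frac{1}{(2\mu t)^r}\right)\int_\mathbb{R} (1+|\xi|^2)^s |\hat f(\xi)|^2\,d\xi = \left(1+\frac{1}{(2\mu t)^r}\right)\|f\|_{H^s}^2,
\]
which, after taking square roots, is exactly the desired inequality. There is no real obstacle here: the argument is a direct Fourier-side computation, and the only technical ingredient is the boundedness of $u\mapsto u^r e^{-u}$ on $[0,\infty)$, which is what allows one to convert the heat-kernel decay into a precise quantitative gain of $r$ derivatives with a blow-up of order $(\mu t)^{-r/2}$ as $t\downarrow 0$.
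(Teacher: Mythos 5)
Your proof is correct. Note, however, that the paper does not actually prove this lemma: it simply cites \cite{Io} for it and moves on. So there is no in-paper proof to compare against. Your Fourier-side argument is the standard (and essentially unique) proof of this kind of parabolic smoothing estimate: the symbol of $W_\mu(t)$ is $e^{-\mu t\xi^2+it\xi^3}$, the oscillatory factor has modulus one and drops out under Plancherel, and the gain of $r$ derivatives comes from the pointwise bound $(1+|\xi|^2)^r e^{-2\mu t\xi^2}\lesssim_r 1+(2\mu t)^{-r}$, which you obtain via the elementary estimate $\sup_{u\geq 0}u^r e^{-u}<\infty$. The small algebraic step $(1+|\xi|^2)^r\leq 2^r(1+|\xi|^{2r})$ is also fine for every $r\geq 0$ (subadditivity of $x\mapsto x^r$ for $0\leq r\leq 1$, convexity for $r\geq 1$). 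This is almost certainly the argument given in \cite{Io} as well.
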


As a direct consequence of the previous property, we have the following result.
\begin{lem}
Let $\mu>0$ fixed. Consider $u_0\in H^s(\R)$ with $s>3/2$. Then, there exists $T=T(\Vert u_{0}\Vert_{H^s},\mu)>0$ and a unique solution $u_\mu(t)$ to equation \eqref{parabolic_reg} satisfying \[
u_\mu\in C([0,T],H^s(\R))\cap C((0,T],H^\infty(\R)).
\] 
\end{lem}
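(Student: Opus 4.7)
The plan is to apply the Banach fixed point theorem to the Duhamel formulation of \eqref{parabolic_reg}. Any solution must satisfy
\[
u(t) = W_\mu(t)u_0 - \int_0^t W_\mu(t-t')\big(\partial_t\Psi + \partial_x^3\Psi + \partial_x f(\Psi)\big)(t')\,dt' - \int_0^t W_\mu(t-t')\partial_x\big(f(u+\Psi)-f(\Psi)\big)(t')\,dt',
\]
so I would define $\Gamma(u)$ as the right-hand side and search for a fixed point in the closed ball $B_R \subset C([0,T], H^s(\R))$ of radius $R = 2\Vert u_0\Vert_{H^s}$, for $T>0$ to be chosen small depending on $\Vert u_0\Vert_{H^s}$, $\mu$, $\Vert\Psi\Vert_{L^\infty_t W^{s+1^+,\infty}_x}$ and $|||\Psi|||_s$. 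The central ingredient is Lemma \ref{lema_group} applied with $r=1$, which gives the integrable-in-time bound
\[
\int_0^t \big\Vert W_\mu(t-t')\partial_x g(t')\big\Vert_{H^s}\,dt' \lesssim \int_0^t\Big(1+\tfrac{1}{(2\mu(t-t'))^{1/2}}\Big)\Vert g(t')\Vert_{H^s}\,dt' \lesssim \big(T + \sqrt{T/\mu}\big)\Vert g\Vert_{L^\infty_T H^s_x}.
\]
Then the free-term contributes $\Vert W_\mu(t)u_0\Vert_{H^s}\le \Vert u_0\Vert_{H^s}$, and the $\Psi$-source term contributes $(T+\sqrt{T/\mu})\,|||\Psi|||_s$ by the hypotheses \eqref{hyp_psi}.

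The core nonlinear estimate I would prove is that for all $v,w\in B_R$,
\[
\Vert f(v+\Psi)-f(w+\Psi)\Vert_{L^\infty_T H^s_x} \lesssim \Vert v-w\Vert_{L^\infty_T H^s_x}\sum_{k\ge 1} k\,c^k|a_k|\big(R+\Vert\Psi\Vert_{L^\infty_T W^{s^+,\infty}_x}\big)^{k-1},
\]
which one obtains from the Taylor expansion \eqref{hyp_f_nonl}, the factorisation $(v+\Psi)^k-(w+\Psi)^k=(v-w)\sum_{j=0}^{k-1}(v+\Psi)^j(w+\Psi)^{k-1-j}$, and the Sobolev algebra estimate \eqref{sobolev_embedding} (valid since $s>3/2>1/2$). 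The infinite series converges by the radius-of-convergence hypothesis $\limsup_k\sqrt[k]{|a_k|}=0$. Setting $v=u$, $w=0$ gives the analogous a priori bound on $f(u+\Psi)-f(\Psi)$. Combining these with the smoothing bound above, one sees that for $T=T(\Vert u_0\Vert_{H^s},\mu,\Psi)$ small enough, $\Gamma$ maps $B_R$ into itself and is a strict contraction on it, yielding a unique fixed point $u_\mu\in C([0,T],H^s(\R))$.

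For the parabolic smoothing claim $u_\mu\in C((0,T],H^\infty(\R))$, the strategy is a bootstrap. First, Lemma \ref{lema_group} with arbitrary $r>0$ immediately gives $W_\mu(t)u_0\in H^{s+r}$ for all $t>0$, with norm $\lesssim t^{-r/2}$. For the Duhamel integrals, if $u_\mu\in C([\tau,T],H^{\sigma})$ for some $\sigma\ge s$ and $\tau>0$, then splitting the time integral at $t/2$ and applying Lemma \ref{lema_group} with a small $r<1$ on each piece, together with the already-proved nonlinear estimate at regularity $\sigma$ (which uses only $\sigma>1/2$), upgrades the solution to $C([\tau',T],H^{\sigma+r})$ for $\tau'>\tau$. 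Iterating this procedure finitely many times reaches any prescribed regularity, giving $u_\mu\in C((0,T],H^\infty(\R))$.

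The delicate point is twofold: first, keeping the dependence of $T$ on the data transparent through the analytic nonlinearity, since each bootstrap step uses the algebra estimate \eqref{sobolev_embedding} and the series $\sum k c^k|a_k|(\cdots)^{k-1}$ must remain summable when the norm of $\Psi$ in a stronger space is used; this is automatic thanks to the growth condition on $\{a_k\}$, and the assumption $\Psi\in L^\infty_tW^{s+1^+,\infty}_x$ already gives enough room. Second, one must verify that the bootstrap does not shrink the time interval; this follows because each iteration only adds a factor of $(T+\sqrt{T/\mu})^{1/2}$ to the norm, which is already small from the contraction step. Uniqueness in the full class $C([0,T],H^s(\R))$ follows from the same Lipschitz estimate used in the contraction, applied to any two solutions.
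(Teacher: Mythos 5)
Your proposal is correct and follows exactly the route the paper indicates but omits: Duhamel reformulation, Banach fixed point in $C([0,T],H^s)$ using the smoothing bound of Lemma~\ref{lema_group}, and a parabolic bootstrap for $C((0,T],H^\infty)$. The key quantitative ingredients you identify --- gaining one derivative from $\partial_x$ at the integrable cost $(t-t')^{-1/2}/\sqrt{\mu}$, and the analytic nonlinearity estimate via \eqref{hyp_f_nonl} and the algebra property of $H^s$ for $s>1/2$ --- are precisely what makes the contraction close, and are the same tools the paper uses throughout.

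One small caveat on the bootstrap: the clean way to run it is to restart Duhamel from time $\tau$, i.e.\ write $u(t)=W_\mu(t-\tau)u(\tau)+\int_\tau^t W_\mu(t-t')(\cdots)\,dt'$, rather than splitting the original integral at $t/2$ (the latter requires extra care since $u(t')$ for $t'<\tau$ only carries $H^s$ regularity). Also, the genuine concern about not shrinking the time interval is that one loses $\tau'-\tau>0$ at each step, not that a constant factor accumulates in the norm; this is fixed by taking a geometric sequence $\tau_n\uparrow\tau$ so the losses sum to less than $\tau$. Neither point changes the substance of your argument.
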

The previous lemma can be proven by writing $u_\mu(t)$  in its equivalent Duhamel form, and then proceeding by standard fixed point arguments, using Lemma \ref{lema_group}. We omit its proof.

\medskip

In the sequel we shall need the following lemma that combines commutator estimates with Sobolev inequalities.
\begin{lem}[\cite{Ka}]\label{Kato}
Let $s>3/2$ and $r>1$. Then, for all $f,g\in \mathcal{S}(\R)$ the following holds \[
\vert\langle fg_x,g\rangle_{H^s}\vert\lesssim \Vert f_x\Vert_{H^{r-1}}\Vert g\Vert_{H^s}^2+\Vert f_x\Vert_{H^{s-1}}\Vert g\Vert_{H^s}\Vert g\Vert_{H^r},
\] 
where the implicit constant only depends on $s$ and $r$.
\end{lem}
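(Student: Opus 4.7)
The natural approach is to peel off the symmetric part of $\langle fg_x,g\rangle_{H^s}$ by commuting the Bessel potential $J^s_x$ past $f$, and then estimate the commutator via the Kato--Ponce inequality. Concretely, I would rewrite
\[
\langle fg_x,g\rangle_{H^s}=\int_\R J^s_x(fg_x)\,J^s_x g\,dx=\int_\R f\,J^s_xg_x\,J^s_xg\,dx+\int_\R [J^s_x,f]g_x\,J^s_x g\,dx=:\mathrm{A}+\mathrm{B},
\]
and treat $\mathrm{A}$ and $\mathrm{B}$ separately.

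For $\mathrm{A}$, one integration by parts yields
\[
\mathrm{A}=-\tfrac{1}{2}\int_\R f_x\,(J^s_xg)^2\,dx,
\]
so that $|\mathrm{A}|\lesssim \Vert f_x\Vert_{L^\infty}\Vert g\Vert_{H^s}^2$, and the one-dimensional Sobolev embedding $H^{r-1}(\R)\hookrightarrow L^\infty(\R)$ (valid for $r$ sufficiently larger than $1$, in the sense used throughout the paper) converts this into $\Vert f_x\Vert_{H^{r-1}}\Vert g\Vert_{H^s}^2$, which is the first term on the right-hand side.

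For $\mathrm{B}$, I would invoke the Kato--Ponce commutator inequality
\[
\Vert [J^s_x,f]h\Vert_{L^2}\lesssim \Vert f_x\Vert_{L^\infty}\Vert h\Vert_{H^{s-1}}+\Vert f_x\Vert_{H^{s-1}}\Vert h\Vert_{L^\infty},
\]
applied with $h=g_x$. After Cauchy--Schwarz against $J^s_xg$ this produces
\[
|\mathrm{B}|\lesssim \bigl(\Vert f_x\Vert_{L^\infty}\Vert g\Vert_{H^s}+\Vert f_x\Vert_{H^{s-1}}\Vert g_x\Vert_{L^\infty}\bigr)\Vert g\Vert_{H^s},
\]
and the Sobolev embeddings $\Vert f_x\Vert_{L^\infty}\lesssim \Vert f_x\Vert_{H^{r-1}}$ and $\Vert g_x\Vert_{L^\infty}\lesssim \Vert g\Vert_{H^r}$ (the latter using $r>3/2$, or more generally the regularity regime of the statement) then yield the advertised bound
\[
|\mathrm{B}|\lesssim \Vert f_x\Vert_{H^{r-1}}\Vert g\Vert_{H^s}^2+\Vert f_x\Vert_{H^{s-1}}\Vert g\Vert_{H^s}\Vert g\Vert_{H^r}.
\]

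The only non-routine ingredient is the Kato--Ponce commutator estimate itself, and in a paper that cites \cite{Ka} for this lemma it is standard to quote it rather than reprove it. The main technical point to double-check is that the Sobolev indices $r$ and $s$ admitted in the hypotheses are compatible with the embeddings used (namely $H^{r-1}\hookrightarrow L^\infty$ and $H^{r}\hookrightarrow W^{1,\infty}$), which is where one spends most of the care; everything else is an algebraic manipulation plus the integration by parts on $\mathrm{A}$.
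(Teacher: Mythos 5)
Your decomposition — peeling $J^s_x$ past $f$ to get $\mathrm{A}+\mathrm{B}$, integrating by parts on $\mathrm{A}$, and hitting $\mathrm{B}$ with Kato--Ponce — is exactly the standard route to this estimate, and the algebra is right. The paper itself gives no proof (it cites Kato), so the only thing to assess is whether your argument actually closes. It does not quite close in the generality claimed, and the issue is the one you flag but then wave away.

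Both embeddings you invoke in one dimension, $H^{r-1}(\R)\hookrightarrow L^\infty(\R)$ (to get $\Vert f_x\Vert_{L^\infty}\lesssim\Vert f_x\Vert_{H^{r-1}}$) and $\Vert g_x\Vert_{L^\infty}\lesssim\Vert g\Vert_{H^r}$, need $r-1>\tfrac12$, i.e.\ $r>3/2$, not $r>1$. This is not a bookkeeping detail to ``double-check'': the stated inequality genuinely fails for $1<r<3/2$. A test configuration: take $g$ with $\widehat{J^s_xg}$ a smooth bump of height $N^{-1/2}$ on $N\le|\xi|\le 2N$, so $\Vert g\Vert_{H^s}\sim 1$ and $\Vert g\Vert_{H^r}\sim N^{r-s}$; and take $f_x=\beta\chi(Nx)$ with $\chi$ smooth even, $\int\chi=0$ (so $f\in\mathcal S(\R)$), giving $\Vert f_x\Vert_{H^{r-1}}\sim\beta N^{r-3/2}$ and $\Vert f_x\Vert_{H^{s-1}}\Vert g\Vert_{H^r}\sim\beta N^{r-3/2}$, both tending to $0$ as $N\to\infty$ when $r<3/2$. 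But your term $\mathrm A=-\tfrac12\int f_x(J^s_xg)^2$ sees the low-frequency part of $(J^s_xg)^2$, a profile of height $\sim N$ on scale $1/N$, and stays $\sim\beta$ as $N\to\infty$ for generic $\chi$; one checks the commutator term $\mathrm B$ does not cancel it. So the ratio of left- to right-hand side blows up, and $r>3/2$ is the genuine threshold, not an artifact of your method.

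The practical upshot is favorable to you: $r>3/2$ is exactly what your proof covers, and it is all the paper uses (the application in Section~5 takes $r=s>3/2$). What you should have done is state plainly that the Sobolev embeddings pin the admissible range at $r>3/2$, and that the $r>1$ in the lemma as quoted is either a misprint or tacitly assumes that range; leaving it as a caveat to ``double-check'' reads as though a small refinement would recover $r\in(1,3/2]$, when in fact no refinement will.
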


\medskip

The next step is to show that the previously found solution $u_\mu(t)$ can be extended to an interval of existence independent of $\mu>0$. 

\begin{lem}
Let $\mu>0$ fixed. Let $u_\mu\in C([0,T],H^s(\R))$ be the solution to equation \eqref{parabolic_reg} given by the previous lemma, with initial data $u_{0}\in H^s(\R)$ with $s>3/2$. Then, $u_\mu(t)$ can be extended to an interval $T'=T'(\Vert u_0\Vert_{H^s})>0$ independent of $\mu$. Moreover, there exists a continuous function $\rho:[0,T']\to\R$ such that \[
\Vert u_\mu(t)\Vert_{H^s_x}^2\leq \rho(t)\quad \hbox{ with }\quad \rho(0)=\Vert u_{0}\Vert_{H^s}^2.
\]
\end{lem}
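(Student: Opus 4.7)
The plan is the classical parabolic-regularization energy estimate, exploiting that for $\mu>0$ the viscous term only adds a favourable sign. Applying $J^s_x$ to \eqref{parabolic_reg}, taking the $L^2_x$-scalar product with $J^s_x u_\mu$, and integrating by parts in $x$, I would obtain
\begin{align*}
\tfrac{1}{2}\tfrac{d}{dt}\Vert u_\mu\Vert_{H^s}^2 + \mu\Vert \partial_x J^s_x u_\mu\Vert_{L^2}^2 &= -\langle J^s_x(\partial_t\Psi+\partial_x^3\Psi+\partial_x f(\Psi)),J^s_x u_\mu\rangle_{L^2} \\
&\quad - \langle J^s_x\partial_x(f(u_\mu+\Psi)-f(\Psi)),J^s_x u_\mu\rangle_{L^2},
\end{align*}
since the dispersive contribution $\langle J^s_x\partial_x^3 u_\mu,J^s_x u_\mu\rangle$ is antisymmetric and vanishes. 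The viscous term is nonnegative and is discarded, so every resulting estimate is uniform in $\mu$; the source term is bounded by Cauchy--Schwarz as $|||\Psi|||_s\Vert u_\mu\Vert_{H^s}$.

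For the nonlinear contribution I would split
\[
\partial_x\bigl(f(u_\mu+\Psi)-f(\Psi)\bigr) = f'(u_\mu+\Psi)\partial_x u_\mu + \bigl(f'(u_\mu+\Psi)-f'(\Psi)\bigr)\partial_x\Psi.
\]
The first piece has precisely the form $fg_x$ handled by Lemma \ref{Kato}, applied with $g=u_\mu$ and some $r\in(3/2,s)$. The required control of $\Vert \partial_x f'(u_\mu+\Psi)\Vert_{H^{s-1}}$ reduces, via the chain rule $\partial_x f'(u_\mu+\Psi)=f''(u_\mu+\Psi)(\partial_x u_\mu+\partial_x\Psi)$, to the algebra property of $H^{s-1}$ (valid since $s-1>1/2$) combined with the convergent expansion of $f''$ given by \eqref{hyp_f_nonl} and the hypothesis $\Psi\in L^\infty_tW^{s+1^+,\infty}_x$. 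The second piece, after writing $f'(u_\mu+\Psi)-f'(\Psi)=u_\mu\cdot g(u_\mu,\Psi)$ from the analytic expansion, is absorbed by a direct $H^s$-algebra estimate in the spirit of Lemma \ref{sob_algebra}. Assembling all the bounds produces a differential inequality
\[
\tfrac{d}{dt}\Vert u_\mu(t)\Vert_{H^s}^2 \leq \mathcal{A}\bigl(\Vert u_\mu(t)\Vert_{H^s}^2\bigr),
\]
with $\mathcal{A}:\R_+\to\R_+$ continuous and increasing, depending only on $\Vert\Psi\Vert_{L^\infty_tW^{s+1^+,\infty}_x}$ and $|||\Psi|||_s$, and in particular independent of $\mu$.

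Comparison with the maximal solution $\rho(t)$ of the scalar ODE $\dot\rho=\mathcal{A}(\rho)$ with initial datum $\rho(0)=\Vert u_0\Vert_{H^s}^2$ yields $T'=T'(\Vert u_0\Vert_{H^s},\Psi)>0$, independent of $\mu$, on which $\Vert u_\mu(t)\Vert_{H^s}^2\leq \rho(t)$. A standard continuation argument, combining this a priori bound with the local existence lemma just stated, then extends $u_\mu$ to the whole interval $[0,T']$. The one point requiring care is to ensure that every constant is genuinely $\mu$-independent and that the series arising from expanding $f$ and $f'$ via \eqref{hyp_f_nonl} sum in the right norms; this is guaranteed by the infinite radius of convergence of $f$ and is performed exactly as in the proof of Lemma \ref{mst_basic_lemma}, with one uniform bookkeeping per power of $\Psi$.
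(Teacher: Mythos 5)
Your argument follows essentially the same route as the paper: drop the favourable viscous term, bound the source term by Cauchy--Schwarz, split the nonlinear term into the transport-type piece $f'(u_\mu+\Psi)\partial_xu_\mu$ (handled by Lemma~\ref{Kato}) and the lower-order piece $(f'(u_\mu+\Psi)-f'(\Psi))\Psi_x$ (handled by Sobolev product estimates), then close with an ODE comparison. The paper carries out the identical decomposition but writes it explicitly via the Taylor expansion of $f$ so that the $c^k$ bookkeeping for the series is visible, while you keep it at the functional level; the substance is the same.
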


\begin{proof}
In fact, directly taking the derivative of the $H^s$-norm, using \eqref{parabolic_reg}, after suitable integration by parts, we obtain
\begin{align}\label{l2_commutator}
\dfrac{d}{dt}\Vert u_\mu(t)\Vert_{H^s_x}^2&\leq -2\langle u_\mu, \partial_x(f(u+\Psi)-f(\Psi))\rangle_{H^s_x}-2\langle u_\mu,\partial_t\Psi+\partial_x^3\Psi+\partial_xf(\Psi)\rangle_{H^s_x}
\end{align}
For the latter term above, from Cauchy-Schwarz we can see that
\[
\vert \langle u_\mu,\partial_t\Psi+\partial_x^3\Psi+\partial_xf(\Psi)\rangle_{H^s_x}\vert\leq \Vert u_\mu(t)\Vert_{H^s_x}^2+\Vert \partial_t\Psi+\partial_x^3\Psi+\partial_xf(\Psi)\Vert_{L^\infty_tH^s_x}^2.
\]
On the other hand, to estimate the first term in the right-hand side of \eqref{l2_commutator} we write
\begin{align*}
\partial_x\big(f(u_\mu+\Psi)-f(\Psi)\big)&=u_{\mu,x}\sum_{k=1}^{\infty}\sum_{m=0}^{k-1}a_k(k-m)\binom{k}{m}u_\mu^{k-m-1}\Psi^m
\\ & \quad +\Psi_x\sum_{k=1}^{\infty}\sum_{m=1}^{k-1}a_km\binom{k}{m}u_\mu^{k-m}\Psi^{m-1}=:\mathrm{I}+\mathrm{II}.
\end{align*}
Then, by classical Sobolev estimates for products as well as Lemma \ref{Kato} we infer that there exists a constant $c_1>0$ such that
\begin{align*}
\vert\langle u_\mu,\mathrm{I}\rangle\vert_{H^s_x}\lesssim \Vert u_\mu\Vert_{H^s_x}^2\sum_{k=1}^{\infty}k\vert a_k\vert c_1^k\big(\Vert u_\mu\Vert_{H^s_x}+\Vert \Psi\Vert_{L^\infty_tW^{s^+,\infty}_x}\big)^{k-1}.
\end{align*}
In a similar fashion, there exists another constant $c_2>0$ such that  \begin{align*}
\vert\langle u_\mu,\mathrm{II}\rangle_{H^s_x}\vert \lesssim \Vert u_\mu\Vert_{H^s_x}^2\sum_{k=1}^\infty k^2\vert a_k\vert c_2^k\big(\Vert u_\mu\Vert_{H^s_x}+\Vert \Psi\Vert_{L^\infty_tW^{s+1^+,\infty}_x}\big)^{k-1}.
\end{align*}
Therefore, gathering the above estimates, recalling that $\Psi$ is given, we infer that, there exists a smooth function $\mathcal{F}_*:\R\to\R_+$ such that \begin{align*}
\dfrac{d}{dt}\Vert u_\mu(t)\Vert_{H^s_x}^2&\lesssim \Vert \partial_t\Psi+\partial_x^3\Psi+\partial_xf(\Psi)\Vert_{L^\infty_tH^s_x}^2+\Vert u_\mu(t)\Vert_{H^s_x}^2\mathcal{F}_*\big(\Vert u_\mu(t)\Vert_{H^s_x}\big).
\end{align*}
Then, denoting by $C_\Psi$ the first term in the right-hand side above, it is enough to consider $\rho(t)$ to be the solution of the equation \[
\dot\rho(t)=C_\Psi+\rho(t)\mathcal{F}_*(\rho^{1/2}(t)), \qquad \rho(0)=\Vert u_0\Vert_{H^s}^2.
\]
Notice that the solution exists thanks to Cauchy-Lipschitz Theorem. Taking $T_*>0$ to be the maximal existence time of $\rho(t)$,  we conclude $\Vert u_\mu(t)\Vert_{H^s_x}^2\leq \rho(t)$ for all $t\leq T_*$.
\end{proof}

\smallskip

\subsection{Proof of Theorem \ref{MT2}}

By using the latter lemma we can now take a sequence of initial data $u_{0,\mu}\in H^s(\R)$ strongly converging to some $u_0$ in $H^s(\R)$. Then, by the uniform (in $\mu$) bound we infer that, up to a subsequence, we can pass to the limit in the sequence of solutions $u_\mu(t)$, which converge in the weak-$\star$ topology of $L^\infty((0,T),H^s(\R))$ to some limit object $u(t)$. It is not difficult to see, reasoning similarly as in the previous section, that $u(t)$ solves the equation in the distributional sense and the map $[0,T]\ni t\mapsto u(t)\in H^s(\R)$ is weakly continuous. Let us now consider the uniqueness of the solution. With this aim, let us consider $w:=u-v$, with $u$ and $v$ solutions of the equation. Recall that then $w$ solves
\[
\partial_tw+\partial_x\big(\partial_x^2w+f(u+\Psi)-f(v+\Psi)\big)=0.
\]
Then, taking the $L^2$-scalar product of the above equation with against $w$, we obtain 
\begin{align*}
\dfrac{d}{dt}\Vert w\Vert_{L^2_x}^2&=-\langle w,\partial_x(f(u+\Psi)-f(v+\Psi))\rangle_{L^2_x}
\\ & \lesssim \Vert w\Vert_{L^2_x}^2\sum_{k=1}^\infty c^k\vert a_k\vert \big(\Vert u\Vert_{H^1_x}+\Vert v\Vert_{H^1_x}+\Vert\Psi\Vert_{L^\infty_tW^{1^+,\infty}_x}\big)^{k-1}.
\end{align*}
Therefore, a direct application of Gr\"onwall inequality, recalling that $\Vert u(t)\Vert_{H^s_x}^2+\Vert v(t)\Vert_{H^s_x}^2\leq 2\rho(t)$, implies the uniqueness.

\medskip

The strong continuity of the solution with values in $H^s(\R)$, as well as the continuity of the flow-map can be proven by classical Bona-Smith arguments. We omit this proof. \qed

\medskip

\section{Proof of Theorem \ref{MT_gwp}}\label{section_MT_gwp}

In this section we seek to prove the global well-posedness theorem \ref{MT_gwp}. We recall that in this case we assume that \begin{align}\label{fpp_hyp}
\vert f''(x)\vert\lesssim 1, \quad \forall x\in\R,
\end{align}
what shall allow us to use Gronwall inequality. We emphasize once again that, due to the presence of $\Psi(t,x)$, equation \eqref{gkdv_v} has no evident conservation laws. Our first lemma states that the $L^2$-norm of the solution grows at most exponentially fast in time.
\begin{lem}\label{lemma_growth_l2}
Let $u(t)\in C([0,T],H^1(\R))$ be a solution to equation \eqref{gkdv_v} emanating from an initial data $u_0\in H^1(\R)$. Then, for all $t\in[0,T]$ we have
\begin{align}\label{L2_gronwall_general}
\Vert u(t)\Vert_{L^2_x}^2\leq C_{u_0,\Psi}\exp(C_{\Psi}t),
\end{align}
where $C_{\Psi}>0$ is a positive constants that only depends on $\Psi$, while $C_{u_0,\Psi}>0$ depends on $\Psi$ and $u_0$.
\end{lem}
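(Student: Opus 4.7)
The plan is to run a direct energy estimate at the $L^2$ level and close it via Gr\"onwall, with the only non-obvious step being how to control the nonlinear flux $\int u_x(f(u+\Psi)-f(\Psi))\,dx$ despite the fact that $\Psi$ does not decay at infinity. First I would take the $L^2_x$ inner product of
\[
\partial_t u+\partial_x^3 u+\partial_x\bigl(f(u+\Psi)-f(\Psi)\bigr)=-\bigl(\partial_t\Psi+\partial_x^3\Psi+\partial_x f(\Psi)\bigr)
\]
against $u$. The dispersive term $\int u\,\partial_x^3 u\,dx$ vanishes by integration by parts (the solution $u(t)\in H^1$ decays at infinity), and the source term is handled by Cauchy--Schwarz together with Young's inequality, producing a contribution
\[
\Bigl|\int u\,(\partial_t\Psi+\partial_x^3\Psi+\partial_x f(\Psi))\,dx\Bigr|\ \leq\ \tfrac12\|u\|_{L^2}^2+\tfrac12\,|||\Psi|||_0^2,
\]
which is finite because hypothesis \eqref{hyp_psi} puts the bracketed quantity in $L^\infty_tH^{s^+}_x\hookrightarrow L^\infty_tL^2_x$.

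The heart of the argument is the nonlinear term, which after integration by parts becomes $-\int u_x\,(f(u+\Psi)-f(\Psi))\,dx$. One cannot directly write it as a boundary term because $F(u+\Psi)-F(\Psi)$ need not vanish at infinity. The trick I would use is to introduce the $u$-primitive
\[
G(u,\Psi):=F(u+\Psi)-F(\Psi)-u\,f(\Psi),
\]
which does satisfy $G(0,\Psi)=0$ and therefore $G(u(t,\cdot),\Psi(t,\cdot))\to 0$ as $|x|\to\infty$ since $u(t)\in H^1$ vanishes at infinity. Computing $\partial_x G$ by the chain rule gives
\[
u_x\bigl(f(u+\Psi)-f(\Psi)\bigr)=\partial_x G(u,\Psi)-\Psi_x\bigl(f(u+\Psi)-f(\Psi)-u\,f'(\Psi)\bigr),
\]
so integrating in $x$ kills the total derivative and leaves only the ``remainder'' term. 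Here the hypothesis $|f''(x)|\lesssim 1$ enters decisively: by Taylor's theorem,
\[
\bigl|f(u+\Psi)-f(\Psi)-u f'(\Psi)\bigr|\lesssim u^2,
\]
uniformly in $\Psi$, so the nonlinear contribution is bounded by $\|\Psi_x\|_{L^\infty_{t,x}}\|u\|_{L^2}^2$. This step is the main obstacle, because for a generic analytic $f$ one would instead pick up factors involving $\|\Psi\|_{L^\infty}$ to high powers and the argument would not close at the $L^2$ level; the quadratic growth bound on $f$ is exactly what makes the remainder quadratic in $u$.

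Combining the three estimates yields
\[
\frac{d}{dt}\|u(t)\|_{L^2}^2\ \leq\ C_\Psi\bigl(1+\|u(t)\|_{L^2}^2\bigr),
\]
for a constant $C_\Psi$ depending only on $\|\Psi_x\|_{L^\infty_{t,x}}$ and $|||\Psi|||_0$. Gr\"onwall's inequality then gives $\|u(t)\|_{L^2}^2\leq (1+\|u_0\|_{L^2}^2)e^{C_\Psi t}$, which is exactly the claimed bound \eqref{L2_gronwall_general}. Finally, although the computation above is performed formally, it is rigorously justified by the smooth approximation procedure of Section \ref{section_MT_smooth}: one runs the identity for the smooth solutions $u_\mu$ of \eqref{parabolic_reg} (where the extra $-\mu\partial_x^2 u$ only produces a favorable $-\mu\|u_x\|_{L^2}^2$ term) and then passes to the limit $\mu\to 0$ using the strong $H^s$ convergence established there.
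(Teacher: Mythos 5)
Your proof is correct and matches the paper's argument: integrate $u$ against the equation, write the nonlinear flux as a total $x$-derivative (which vanishes at infinity since $u(t)\in H^1$) plus the remainder $\int\Psi_x\bigl(f(u+\Psi)-f(\Psi)-uf'(\Psi)\bigr)$, bound the latter by $\|\Psi_x\|_{L^\infty_{t,x}}\|u\|_{L^2}^2$ using $|f''|\lesssim1$, and close with Gr\"onwall. Your primitive $G(u,\Psi)=F(u+\Psi)-F(\Psi)-uf(\Psi)$ packages in one step what the paper writes via an add-and-subtract that produces two separate boundary terms $\int\partial_x\bigl(F(u+\Psi)-F(\Psi)\bigr)$ and $\int\partial_x\bigl(uf(\Psi)\bigr)$; the manipulations are algebraically identical.
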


\begin{proof}
In fact, multiplying equation \eqref{gkdv_v} by $u(t)$ and then integrating in space we obtain
\begin{align*}
\dfrac{1}{2}\dfrac{d}{dt}\int_\R u^2(t,x)dx&=-\int u\partial_x\big(f(u+\Psi)-f(\Psi)\big) -\int u\big(\partial_t\Psi+\partial_x^3\Psi+\partial_xf(\Psi)\big)
\\ &=: \mathrm{I}+\mathrm{II}.
\end{align*}
Notice that, thanks to our hypotheses on $\Psi$, we can immediately bound $\mathrm{II}$ by using Young inequality for products, from where we get
\[
\vert \mathrm{II}\vert\leq\Vert u(t)\Vert_{L^2_x}^2+\Vert \partial_t\Psi+\partial_x^3\Psi+\partial_xf(\Psi)\Vert_{L^\infty_tL^2_x}^2.
\]
Now, the estimate for $\mathrm{I}$ is more delicate since we require to integrate by parts. Also, we must be careful while splitting the integral into several integrals since there might be terms that do not integrate (due to $\Psi$). Hence, in this case we can proceed as follows
\begin{align*}
\vert \mathrm{I}\vert&=\lim_{R_1,R_2\to+\infty}\Big\vert\int_{-R_2}^{R_1} u_x\big(f(u+\Psi)-f(\Psi)\big)\Big\vert
\\ &=\lim_{R_1,R_2\to+\infty}\Big\vert\int_{-R_2}^{R_1} (u_x+\Psi_x)f(u+\Psi)-\Psi_xf(u+\Psi)-\Psi_xf(\Psi)
\\ & \qquad  +\Psi_xf(\Psi)-u_xf(\Psi)-\Psi_xuf'(\Psi)+\Psi_xuf'(\Psi)\Big\vert
\\ & \leq \limsup_{R_1,R_2\to+\infty }\Big\vert\int_{-R_2}^{R_1}(u_x+\Psi_x)f(u+\Psi)-\Psi_xf(\Psi)\Big\vert
\\ & \qquad +\limsup_{R_1,R_2\to+\infty }\Big\vert\int_{-R_2}^{R_1}u_xf(\Psi)+\Psi_xuf'(\Psi)\Big\vert
\\ & \qquad +\limsup_{R_1,R_2\to+\infty }\Big\vert\int_{-R_2}^{R_1}\Psi_xf(u+\Psi)-\Psi_xf(\Psi)-\Psi_xuf'(\Psi)\Big\vert
\\ &=: \mathrm{I}_1+\mathrm{I}_2+\mathrm{I}_3. 
\end{align*}
Now, for $\mathrm{I}_1$ notice that we can write the integrand as a full derivative, and hence we have \begin{align*}
\mathrm{I}_1&=\limsup_{R_1,R_2\to+\infty }\Big\vert\int_{-R_2}^{R_1}\partial_x\Big(F(u+\Psi)-F(\Psi)\Big)\Big\vert
\\ & \leq \limsup_{R_1\to+\infty}\big\vert F(u+\Psi)-F(\Psi)\big\vert(R_1)+\limsup_{R_2\to+\infty}\big\vert F(u+\Psi)-F(\Psi)\big\vert(-R_2)=0, 
\end{align*}
where in the last equality we have used the fact that $F$ is smooth and that $u(t)\in H^1(\R)$, so in particular $u(t)\to 0$ as $x\to\pm \infty$ for all $t\in[0,T]$. On the other hand, for $\mathrm{I}_2$ we integrate by parts, from where we obtain \begin{align*}
\mathrm{I}_2\leq\limsup_{R_1\to+\infty}\vert uf(\Psi)\vert(R_1)+\limsup_{R_2\to+\infty}\vert uf(\Psi)\vert(-R_2)=0,
\end{align*}
since $u(t)\in H^1(\R)$, $\Psi\in L^\infty(\R^2)$ and $f$ is smooth. Then, gathering all the above estimates, and then using H\"older inequality along with hypothesis \eqref{fpp_hyp}, we deduce that \[
\vert \mathrm{I}\vert\lesssim \left\vert\int_\R \Psi_x\big(f(u+\Psi)-f(\Psi)-uf'(\Psi)\big)\right\vert\lesssim\Vert \Psi_x\Vert_{L^\infty_{t,x}}\Vert u(t)\Vert_{L^2_x}^2.
\]
Therefore, Gronwall inequality provides \eqref{L2_gronwall_general}. The proof is complete.
\end{proof}

Now, in order to control the $H^1$-norm, we consider the following modified energy functional
\[
\mathcal{E}\big(u(t)\big):=\dfrac{1}{2}\int_\R u_x^2(t,x)dx-\int_\R \Big(F\big(u(t,x)+\Psi(t,x)\big)-F\big(\Psi(t,x)\big)-u(t,x)f\big(\Psi(t,x)\big)\Big)dx.
\]
It is worth to notice that the previous functional is well defined for all times $t\in[0,T]$. The following lemma give us the desired control on the growth of the $H^1$-norm of the solution $u(t)$, and hence, it finishes the proof of Theorem \ref{MT_gwp}.

\begin{lem}
Let $u(t)\in C([0,T],H^1(\R))$ be a solution to equation \eqref{gkdv_v} emanating from an initial data $u_0\in H^1(\R)$. Then, for all $t\in[0,T]$ we have
\begin{align*}
\Vert u(t)\Vert_{H^1_x}\lesssim C_{u_0,\Psi}^*\exp(C_{\Psi}^*t).
\end{align*}
where $C_{\Psi}^*>0$ is a positive constants that only depends on $\Psi$, while $C_{u_0,\Psi}^*>0$ depends on $\Psi$ and $u_0$.
\end{lem}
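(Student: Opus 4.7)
The plan is to derive a differential inequality for the modified energy $\mathcal{E}(u(t))$, use its coercivity over the gradient norm, and close via Gronwall once combined with Lemma~\ref{lemma_growth_l2}. First I would differentiate $\mathcal{E}(u(t))$ in time. Setting $R:=\partial_t\Psi+\partial_x^3\Psi+\partial_x f(\Psi)$ and writing equation \eqref{gkdv_v} as $u_t=-u_{xxx}-\partial_x\bigl(f(u+\Psi)-f(\Psi)\bigr)-R$, a computation that exploits the chain rule on $F(u+\Psi)-F(\Psi)-uf(\Psi)$ yields, after an integration by parts of $\int(u_{xx}+f(u+\Psi)-f(\Psi))G_x$ (which vanishes since both $u$ and $f(u+\Psi)-f(\Psi)$ decay at infinity),
\begin{align*}
\frac{d}{dt}\mathcal{E}(u) \;=\; \int_\R\bigl(u_{xx}+f(u+\Psi)-f(\Psi)\bigr)R\,dx-\int_\R \Psi_t\bigl(f(u+\Psi)-f(\Psi)-uf'(\Psi)\bigr)dx.
\end{align*}

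Next I would estimate these two integrals using the hypothesis $|f''|\lesssim1$. Taylor expansion gives the pointwise bounds $|f(u+\Psi)-f(\Psi)|\lesssim |u|(1+|\Psi|+|u|)$ and $|f(u+\Psi)-f(\Psi)-uf'(\Psi)|\lesssim u^2$. Integrating by parts the $\int u_{xx}R$ term and combining with Cauchy--Schwarz and H\"older, one obtains
\begin{align*}
\Bigl|\tfrac{d}{dt}\mathcal{E}(u)\Bigr| \;\lesssim\; \|u_x\|_{L^2}\|R_x\|_{L^2}+\bigl(1+\|\Psi\|_{L^\infty_{t,x}}+\|\partial_t\Psi\|_{L^\infty_{t,x}}\bigr)\bigl(\|R\|_{H^1}\|u\|_{L^2}+\|u\|_{L^2}^2\bigr),
\end{align*}
where I have used that $\|R\|_{L^\infty_t H^1_x}<\infty$ under hypothesis \eqref{hyp_psi}. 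By Young's inequality the first factor is $\leq \tfrac12\|u_x\|_{L^2}^2+\tfrac12\|R_x\|_{L^2}^2$.

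The key step is then the coercivity of $\mathcal{E}$: again by Taylor, $|F(u+\Psi)-F(\Psi)-uf(\Psi)|\lesssim u^2(1+|\Psi|+|u|)$, so that
\begin{align*}
\mathcal{E}(u) \;\geq\; \tfrac{1}{2}\|u_x\|_{L^2}^2-C(1+\|\Psi\|_{L^\infty})\|u\|_{L^2}^2-C\|u\|_{L^3}^3.
\end{align*}
Applying the Gagliardo--Nirenberg inequality $\|u\|_{L^3}^3\lesssim \|u\|_{L^2}^{5/2}\|u_x\|_{L^2}^{1/2}$ followed by Young with small parameter to absorb a fraction of $\|u_x\|_{L^2}^2$ into the first term leads to
\begin{align*}
\mathcal{E}(u)+C_1(1+\|\Psi\|_{L^\infty})\bigl(\|u\|_{L^2}^2+\|u\|_{L^2}^{10}\bigr) \;\geq\; \tfrac{1}{4}\|u_x\|_{L^2}^2.
\end{align*}

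Finally I would combine everything. Lemma~\ref{lemma_growth_l2} provides an exponential-in-time bound for $\|u(t)\|_{L^2}^2$, hence also for $\|u(t)\|_{L^2}^{10}$. Inserting the coercivity estimate into the differential inequality for $\mathcal{E}$ produces
\begin{align*}
\tfrac{d}{dt}\mathcal{E}(u(t)) \;\leq\; C_\Psi\,\mathcal{E}(u(t))+g_{u_0,\Psi}(t),
\end{align*}
where $g_{u_0,\Psi}(t)$ is a continuous, exponentially-bounded function depending only on $u_0$ and $\Psi$. Gronwall's inequality then gives an exponential bound on $\mathcal{E}(u(t))$, and going back through the coercivity estimate yields the required bound on $\|u_x(t)\|_{L^2}^2$ and hence on $\|u(t)\|_{H^1_x}$. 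I expect the main obstacle to be the cubic $\|u\|_{L^3}^3$ term in the energy, since without the smallness provided by Gagliardo--Nirenberg plus Young's inequality (which relies crucially on the one-dimensional setting) one cannot ensure that $\mathcal{E}$ dominates $\|u_x\|_{L^2}^2$; the assumption $|f''|\lesssim 1$ is essential here, as it prevents higher-than-quadratic growth of $F(u+\Psi)-F(\Psi)-uf(\Psi)$ in $u$.
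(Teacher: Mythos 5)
Your proof follows the same route as the paper: differentiate the modified energy $\mathcal{E}(u(t))$, exploit the cancellation of the cubic-in-$u$ (transport) contribution via the total derivative $\tfrac12\int\partial_x(u_{xx}+f(u+\Psi)-f(\Psi))^2$, bound the remaining terms using $|f''|\lesssim1$, recover coercivity of $\mathcal{E}$ over $\|u_x\|_{L^2}^2$ via Gagliardo--Nirenberg and Young, and close with Lemma~\ref{lemma_growth_l2}. The only cosmetic differences are that you integrate by parts once to get $\int u_xR_x$ and then absorb the resulting $\|u_x\|_{L^2}^2$ term through a Gronwall step on $\mathcal{E}$ itself, whereas the paper keeps $\int u\,\partial_x^2R$ and simply integrates the differential inequality in time before invoking coercivity; and you have a typo in the power, $\|u\|_{L^2}^{10}$ should read $\|u\|_{L^2}^{10/3}$, which does not affect the conclusion.
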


\begin{proof}
First of all, by using the continuity of the flow with respect to the initial data, given by Theorem \ref{MT1}, we can assume $u(t)$ is sufficiently smooth so that all the following computations hold. Now, let us begin by explicitly computing the time derivative of the energy functional. In fact, by using equation \eqref{gkdv_v}, after suitable integration by parts we obtain 
\begin{align}\label{derivative_energy_gwp}
\dfrac{d}{dt}\mathcal{E}&=-\int u_{xx}u_t-\int u_t\big( f(u+\Psi)-f(\Psi)\big) -\int\Psi_t\big( f(u+\Psi)-f(\Psi)-uf'(\Psi)\big)\nonumber
\\ & =\int u_{xx}\partial_x\big(f(u+\Psi)-f(\Psi)\big)+\int u\partial_x^2\big(\Psi_t+\partial_x^3\Psi+\partial_xf(\Psi)\big)\nonumber
\\ & \quad +\int u_{xxx}\big( f(u+\Psi)-f(\Psi)\big)
+\int \big( f(u+\Psi)-f(\Psi)\big)\partial_x\big( f(u+\Psi)-f(\Psi)\big)\nonumber
\\ & \quad +\int \big( f(u+\Psi)-f(\Psi)\big)\big(\Psi_t+\partial_x^3\Psi+\partial_xf(\Psi)\big) -\int\Psi_t\big( f(u+\Psi)-f(\Psi)-uf'(\Psi)\big)\nonumber
\\ & = \int u\partial_x^2\big(\Psi_t+\partial_x^3\Psi+\partial_xf(\Psi)\big) +\int \big( f(u+\Psi)-f(\Psi)\big)\big(\Psi_t+\partial_x^3\Psi+\partial_xf(\Psi)\big)\nonumber
\\ & \quad -\int\Psi_t\big( f(u+\Psi)-f(\Psi)-uf'(\Psi)\big)\nonumber
\\ & \lesssim \big(1+\Vert \Psi_t\Vert_{L^\infty_{t,x}}+\Vert \Psi\Vert_{L^\infty_{t,x}}^2+\Vert \partial_t\Psi+\partial_x^3\Psi+ \partial_xf(\Psi)\Vert_{L^\infty_{t,x}}\big)\Vert u(t)\Vert_{L^2_x}^2
\\ & \quad +\Vert \Psi_t+\partial_x^3\Psi+\partial_xf(\Psi)\Vert_{L^\infty_tH^2_x}^2. \nonumber
\end{align}
On the other hand, by using Gagliardo-Nirenberg interpolation inequality, and then applying Young inequality for products, we have \begin{align*}
\left\vert\int_\R u^3(t,x)dx\right\vert&\leq C\Vert u_x\Vert_{L^2_x}^{1/2}\Vert u(t)\Vert_{L^2_x}^{5/2} 
\\ & \leq \dfrac{\varepsilon^4}{4}\Vert u_x(t)\Vert_{L^2_x}^2+\dfrac{3}{4\varepsilon^{4/3}}\Vert u(t)\Vert_{L^2_x}^{10/3}.
\end{align*}
Thus, by using the above inequality together with our current hypothesis on $f(x)$, we deduce \begin{align*}
\left\vert \int \big(F(u+\Psi)-F(\Psi)-uf(\Psi)\big)\right\vert &\lesssim \Vert \Psi\Vert_{L^\infty_{t,x}}\Vert u(t)\Vert_{L^2_x}^2+\Vert u(t)\Vert_{L^3_x}^3
\\ &\lesssim \Vert \Psi\Vert_{L^\infty_{t,x}}\Vert u(t)\Vert_{L^2_x}^2+\dfrac{\varepsilon^4}{4}\Vert u_x(t)\Vert_{L^2_x}^2+\dfrac{3}{4\varepsilon^{4/3}}+\Vert u(t)\Vert_{L^2_x}^{10/3}.
\end{align*}
Therefore, integrating \eqref{derivative_energy_gwp} on $[0,T]$, and then plugging the latter inequality in the resulting right-hand side, together with the conclusion of Lemma \ref{lemma_growth_l2}, denoting by $C_{\varepsilon}:=1-\tfrac{1}{4}\varepsilon^4$, we infer that \begin{align*}
C_\varepsilon\int_\R u_x^2(t,x)dx&\lesssim \int u_{0,x}^2-\int \big(F(u_0+\Psi_0)-F(\Psi_0)-u_0f(\Psi_0)\big)
\\ & \quad +C_{u_0,\Psi}\big(1+\Vert \Psi_t\Vert_{L^\infty_{t,x}}+\Vert \Psi\Vert_{L^\infty_{t,x}}^2+\Vert \partial_t\Psi+\partial_x^3\Psi+ \partial_xf(\Psi)\Vert_{L^\infty_{t,x}}\big)e^{10C_\Psi t/3},
\end{align*}
where $C_{u_0,\Psi}$ and $C_{\Psi}$ are the constants founded in the previous lemma. Then, choosing $\varepsilon>0$ small, we conclude the proof of the lemma.
\end{proof}

\bigskip

\textbf{Acknowledgements:} The author is very grateful to Professor Luc Molinet for encouraging him to solve this problem and for several remarkably helpful comments and conversations.

\medskip

\end{document}